\numberwithin{equation}{section}
\newcommand{\etalchar}[1]{$^{#1}$}
\providecommand{\bysame}{\leavevmode\hbox to3em{\hrulefill}\thinspace}
\providecommand{\MR}{\relax\ifhmode\unskip\space\fi MR }
\providecommand{\href}[2]{#2}
\theoremstyle{plain}
\newtheorem{theorem}{Theorem}[section]
\newtheorem{lemma}[theorem]{Lemma}
\newtheorem{proposition}[theorem]{Proposition}
\newtheorem{corollary}[theorem]{Corollary}
\theoremstyle{definition}
\newtheorem{definition}{Definition}[section]
\newtheorem{remark}{Remark}[section]
\newcommand{\bitem}{\begin{itemize}}
\newcommand{\eitem}{\end{itemize}}
\newcommand{\mc}[1]{\mathcal{#1}}
\newcommand{\N}{\mathbb{N}}
\newcommand{\R}{\mathbb{R}}
\newcommand{\Rmn}{\mathbb{R}^{m\times n}}
\newcommand{\Z}{\mathbb{Z}}
\newcommand{\bbd}{\mathbbm{d}}
\newcommand{\bpm}{\begin{pmatrix}}
\newcommand{\epm}{\end{pmatrix}}
\newcommand{\bsm}{\left(\begin{smallmatrix}}
\newcommand{\esm}{\end{smallmatrix}\right)}
\newcommand{\T}{\top}
\newcommand{\ol}[1]{\overline{#1}}
\newcommand{\la}{\langle}
\newcommand{\ra}{\rangle}
\newcommand{\mrm}[1]{\mathrm{#1}}
\newcommand{\veps}{\varepsilon}
\newcommand{\gdw}{\Leftrightarrow}
\newcommand{\eins}{\mathbbm{1}}
\DeclareMathOperator{\rank}{rank}
\DeclareMathOperator{\spark}{spark}
\DeclareMathOperator{\sign}{sign}
\DeclareMathOperator{\TV}{TV}
\title[TV-Based Tomographic Reconstruction]{Phase Transitions and Cosparse Tomographic Recovery of Compound Solid Bodies from Few Projections}
\author[A.~Deni\c{t}iu, S.~Petra, Cl.~Schn\"{o}rr, Ch.~Schn\"{o}rr]{Andreea Deni\c{t}iu, Stefania Petra, Claudius Schn\"{o}rr,
Christoph Schn\"{o}rr}
\address[A.~Deni\c{t}iu, S.~Petra, Ch.~Schn\"{o}rr]{Image and Pattern Analysis Group, University of Heidelberg, Speyerer Str.~6, 69115 Heidelberg, Germany}
\email{\{denitiu,petra,schnoerr\}@math.uni-heidelberg.de}
\urladdr{iwr.ipa.uni-heidelberg.de}
\address[A.~Deni\c{t}iu, Cl.~Schn\"{o}rr]{University of Applied Sciences, Lothstr.~64, 80335 Munich, Germany}
\email{\{denitiu,schnoerr\}@cs.hm.edu}
\urladdr{http://www.cs.hm.edu/die\_fakultaet/ansprechpartner/professoren/schnoerr/}
\date{} 
\keywords{compressed sensing, underdetermined systems of linear equations, cosparsity, total variation, discrete and limited-angle tomography}
\begin{document}

\begin{abstract}
We study unique recovery of cosparse signals from limited-angle tomographic measurements of two- and three-dimensional domains. Admissible signals belong to the union of subspaces defined by all cosupports of maximal cardinality $\ell$ with respect to the discrete gradient operator. We relate $\ell$ both to the number of measurements and to a nullspace condition with respect to the measurement matrix, so as to achieve unique recovery by linear programming. These results are supported by comprehensive numerical experiments that show a high correlation of performance in practice and theoretical predictions. Despite poor properties of the measurement matrix from the viewpoint of compressed sensing, the class of uniquely recoverable signals basically seems large enough to cover practical applications, like contactless quality inspection of compound solid bodies composed of few materials.
\end{abstract}

\maketitle

\section{Introduction} \label{sec:Introduction}
\subsection{Overview, Motivation} 
\label{sec:overview}
\emph{Discrete tomography} \cite{Herman1999} is concerned with the recovery of functions from few tomographic projections. Feasibility of this severely ill-posed problem rests upon assumptions that restrict the degrees of freedom of functions to be reconstructed. The canonical assumption is that functions only attain values from a finite set. Discrete tomography has shown potential for large-scale applications in various areas \cite{Petra-et-al-09a,Batenburg-TomoTV-12} which also stimulates theoretical research.

As advocated in \cite{Petra2009}, considering the problem of discrete tomography from the broader viewpoint of \emph{compressive sensing} \cite{CompressiveSampling-Notes-07,CompressedSensingIntro-08} enables to consider more general scenarios and to employ additional methods for investigating theoretical and practical aspects of discrete tomography. While the set of measurements (tomographic projections) is still ``discrete'' as opposed to the ``continuous'' theory of established tomographic settings \cite{Natterer2001}, functions to be reconstructed are required to be compressible: a \emph{sparse} representation exists such that few measurements of any function of some admissible class capture the degrees of freedom and enable recovery. Establishing corresponding sampling rates in connection with a given sparse representation and a model of the imaging sensor constitutes the main problem of mathematical research.

\begin{figure}
\begin{center}
\includegraphics[width=0.25\textwidth]{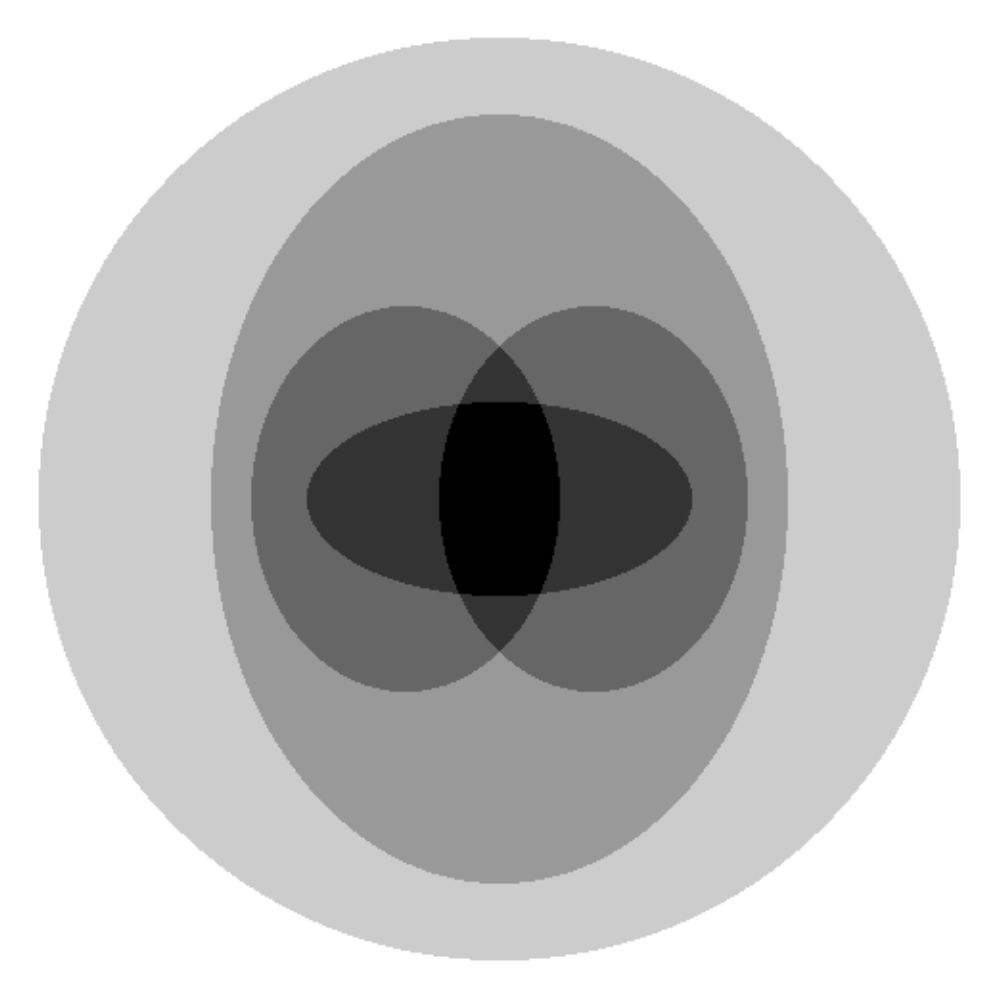}
\includegraphics[width=0.205\textwidth]{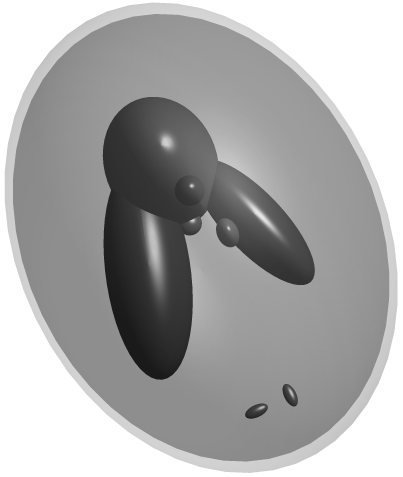}
\end{center}
\caption{The left figure sketches the class of compound solid bodies considered in this paper for reconstruction from few tomographic projections. These objects are similar to the 3D Shepp-Logan phantom (right) and are composed of different materials in a homogeneous way but with unknown geometry. The gradient of the piecewise constant intensity function is sparse. Recovery conditions depending on this property and the number of measurements are studied in this paper. For the present example, the cosparsity $\ell$ (defined by\eqref{eq:def-cosupport}) of the 3D Shepp-Logan phantom with $128^3$ voxels equals $\ell=3(d-1)d^2-109930=6132374$ with $d=128$. As we will show in connection with eqn.~\eqref{eq:m-lb-unknown}, the $\approx 2\cdot 10^{6}$ voxel values of the 3D Shepp-Logan phantom can be recovered \emph{exactly} from 
tomographic projections along $4\times (2d-1)d=130560$ parallel rays via the projecting matrix from four directions, see Section \ref{sec:setup_3D}, Fig.~\ref{fig:4C3D}.}
\label{fig:set-up}
\end{figure}

\begin{figure}
\begin{center}
\includegraphics[width=0.25\textwidth]{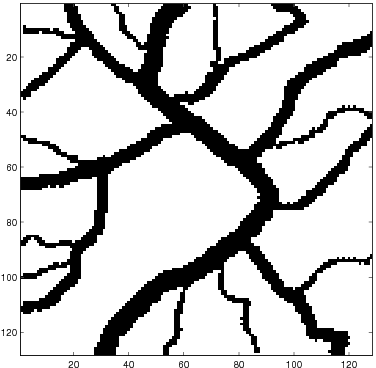}
\includegraphics[width=0.25\textwidth]{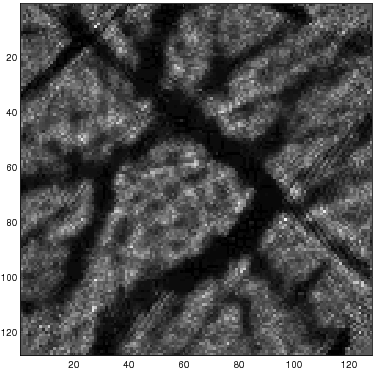}
\end{center}
\caption{Two experimental results are shown that demonstrate the effect of regularization -- total variation minimization recovery (left) versus $\ell_1$-minimization recovery (right). Our recovery analysis considered in the present paper applies also to the $128\times 128$ binary image on the left. This image has cosparsity $\ell=2(d-1)d-2251=30261$, while the sparsity of the gradient equals $2251$. As a consequence, the left image can be reconstructed \emph{exactly} via \eqref{eq:TVmin_pos} from 18 projections. 
Uniqueness of $\ell$-cosparse solution is provided for at least 2263 measurements according to \eqref{eq:m-lb-unknown}. On the other hand, taking into account that the image by itself (rather than its gradient) is $12648$-sparse, we can reconstruct it exactly by \eqref{eq:l1min}, but from 63 projections, following the analysis from \cite{Petra-Schnoerr-LAA,Petra2013b}. Thus about $3.5\times$ more measurements are needed than in the previous case.
Using the same number of 18 projections that suffice for exact reconstruction via \eqref{eq:TVmin_pos}, the reconstruction via \eqref{eq:l1min}, yields the poor result shown on the right.}
\label{fig:TV_vs_ell1}
\end{figure}

In this paper, we consider the problem of reconstructing compound solid bodies in dimensions $\bbd=2$ or $\bbd=3$, as illustrated by Figure \ref{fig:set-up}. These functions are represented by vectors $u \in \R^{n}$ in a high-dimensional Euclidean space, with components $u(v_{i})$ indexed by vertices $v_{i} \in V$ of a regular grid graph $G=(V,E)$ corresponding to pixels in 2D ($\bbd=2$) and to voxels in 3D ($\bbd=3$). The key assumption is that \emph{gradients} of functions to be reconstructed are sufficiently \emph{sparse}. 

As a consequence, if the linear system $A u = b$ represents the tomographic imaging set-up with given measurements $b \in \R^{m}$, then the standard $\ell_{1}$-minimization approach 
\begin{equation}\label{eq:l1min}
\min \|u\|_1 \qquad {\rm s.t.}  \quad Au=b, 
\end{equation}
does not apply, because $u$ itself is \emph{not} sparse. We consider instead, the \emph{total variation} criterion
\begin{equation}\label{eq:TVmin}
\min_{u} \TV(u) \qquad {\rm s.t.}  \quad Au=b, 
\end{equation}
and its \emph{nonnegative} counterpart
\begin{equation}\label{eq:TVmin_pos}
\min_{u} \TV(u) \qquad {\rm s.t.}  \quad Au=b, \quad u\ge 0, 
\end{equation}
that in the continuous case returns the $(\bbd-1)$-dimensional Hausdorff measure of discontinuities of indicator functions \cite{Ziemer-89}, with numerous applications in mathematical imaging \cite{Scherzer2011}. This provides the natural sparse representation of the class of functions considered in this paper (cf.~Fig.~\ref{fig:set-up} \& Fig.~\ref{fig:TV_vs_ell1}). 
Our objective in this paper is to establish sampling rates that enable the recovery of $u$ as solution to the optimization problem \eqref{eq:TVmin}
or \eqref{eq:TVmin_pos}.

For industrial applications additionally motivating our work, we refer to e.g.~\cite{Carmignato2012,Grunzweig2013}. In this context, scenarios of \emph{limited-angle tomography} are relevant to our work as they enable minimization of acquisition time and related errors, affecting the quality of projection measurements and in turn object reconstruction.

\subsection{Related Work and Contribution}\label{sec:related_work}
Theoretical recovery guarantees, expressed as thresholds on the critical parameters of problem \eqref{eq:l1min},  
relate the solution sparsity to the solution degrees of freedom and to the number of measurements. Recent work illustrates that the focus of corresponding research in \emph{compressed sensing (CS)} is shifting - in contrast to discrete tomography \cite{Herman1999} - from a \emph{worst-case analysis}
\cite{Gardner-Gritzmann-97, Petra2009} towards an \emph{average-case analysis}
\cite{LimS13, LimS13a, JafarpourDC12}. 
As for many other difficult combinatorial problems, the probabilistic approach is a plausible, and often the only possible way, 
to make well-founded statements that go beyond idealistic mathematical assumptions and that are also relevant for real-world applications.

In discrete tomography, images to be reconstructed are sampled along lines. Thus, sampling patterns are quite different from
random and non-adaptive measurements that are favourable from the viewpoint of compressed sensing.
In \cite{Petra2009}, we showed that structured sampling patterns as used in commercial \emph{computed tomography (CT)} scanners do not satisfy the CS conditions, like the nullspace property and
the \emph{restricted isometry property (RIP)}, that guarantee accurate recovery of
sparse (or compressible) signals. In fact, these recovery conditions predict a
quite poor worst-case performance of tomographic measurements, due to the high nullspace sparsity
of a tomographic projection matrix $A$. Moreover, the gap
between available \emph{worst-case} recovery results of CS \cite{FacesRandomPolytopes-09} and 
\emph{worst-case}
results from tomographic projections in \cite{Petra2009} is dramatic.

In  \cite{Petra2013b,Petra-Schnoerr-LAA}, we presented an average-case relation between image sparsity
and sufficient number of measurements for recovery, and we showed
that the transition from non-recovery to recovery is sharp for specific sparse
images. The analysis is based on the non-negativity of the coefficient matrix
and of the signal itself and utilizes new mathematical tools from CS via expander graphs.

However, due to the unrestricted sign patterns of the sparse vector $\nabla u$
and of the corresponding coefficient matrix, compare Section \ref{sec:minTV}, we cannot
transfer the recovery results established in \cite{Petra2013b} to the problem \eqref{eq:TVmin} and \eqref{eq:TVmin_pos}.

We overcome this difficulty by adopting the recently introduced \emph{cosparse analysis model} from \cite{Nam2013}, that provides an alternative viewpoint to the classical \emph{synthesis model}
and is more suitable to the problem class considered in this paper. Our present work applies and extends the results
from \cite{Nam2013} to the 3D recovery problem from few tomographic projections of three-dimensional images consisting of few homogeneous regions.
We give a  theoretical relation between the image \emph{cosparsity} 
and sufficient sampling, validate it empirically and conclude that TV-reconstructions of a
class of synthetic phantoms exhibit a well-defined recovery curve similar to
the  study in \cite{Petra-Schnoerr-LAA,Petra2013b}.

Empirical evidence for the recovery of piecewise constant functions
from few tomographic measurements was already  observed in \cite{Sidky_TV_POCS,Herman_InvProbl,Jorgensen_IEEE}.
The first theoretical guarantees that have been obtained for recovery  from  noiseless samples of images with exactly sparse gradients via total variation minimization, date back to the beginnings of CS \cite{Candes_2006_TV,Candes_UP_2006}. 
However, the measurements considered were incomplete Fourier samples, and images were not sampled
along lines in the spatial domain, but along few radial lines in the frequency domain.
Such measurements ensembles are known to have good CS properties as opposed to the
CT setup, and are almost isometric on sparse signals for a sufficient number of samples.
As a result, recovery is stable in such scenarios. Stable recovery of the image gradient
from incomplete Fourier samples was shown in \cite{PatelMGC12}, while Needell \cite{NeedellStableTV}
showed that stable image reconstruction via total variation minimization is possible
also beyond the Fourier setup, provided the measurement ensemble satisfies the RIP condition.

\subsection{Organization} 
Section \ref{sec:TomoProp} collects basic definitions from compressed sensing and characterizes accordingly the imaging scenarios considered in this paper. 
We work out in more detail in Section \ref{sec:SparseTV} that the required assumptions in \cite{NeedellStableTV} do not imply relevant recovery guarantees for the discrete tomography set-ups considered here.
In Section \ref{sec:CoSparse}, we adopt the cosparse analysis model \cite{Nam2013} and generalize corresponding results to the practically relevant three-dimensional case.
Aspects of the linear programming formulation used to solve problem \eqref{eq:TVmin_pos}, are examined in Section \ref{sec:minTV}. A comprehensive numerical study underpinning our results is reported in Section \ref{sec:Experiments}.
We conclude in Section \ref{sec:Conclusions}.

\subsection{Basic Notation}\label{sec:Notation}
For $n \in \N$, we use the shorthands $[n] = \{1,2,\dotsc,n\}$ and $[n]_{0} = \{0,1,\dotsc,n-1\}$. For a subset $\Gamma \subset [n]$, the complement is denoted by $\Gamma^{c} = [n] \setminus \Gamma$. For some matrix $A$ and a vector $z$, $A_{\Gamma}$ denotes the submatrix of rows indexed by $\Gamma$, and $z_{\Gamma}$ the corresponding subvector. Thus, $A_{\Gamma} z_{\Gamma} = (A z)_{\Gamma}$. $\mc{N}(A)$ denotes the nullspace of $A$. Vectors are columns vectors and indexed by superscripts. $z^{\T}$ denotes the transposed vector $z$ and $\la z^{1}, z^{2} \ra$ the Euclidean inner product. To save space, however, we will sometimes simply write e.g.~$z = (z^{1},z^{2})$ instead of correctly denoting $z = \big( (z^{1})^{\T}, (z^{2})^{\T}\big)^{\T}$, for $z = \bsm z^{1} \\ z^{2} \esm$.
$\eins = (1,1,\dotsc,1)^{\T}$ denotes the one-vector whose dimension will always be clear from the context.
The dimension of a vector $z$ we denote by $\dim(z)$.

We consider signals $u(x),\,x \in \Omega \subset \R^{\bbd},\, \bbd \in \{2,3\}$ discretized as follows. $\Omega$ is assumed to be a rectangular cuboid covered by a \emph{regular grid graph} $G = (V,E)$ of size $|V| = n$. Accordingly, we identify $V = \prod_{i \in [d]} [n_{i}]_{0} \subset \Z^{\bbd}$, $n_{i} \in \N$. Thus, vertices $v \in V$ are indexed by $(i,j)^{\T} \in \Z^{2}$ and $(i,j,k)^{\T} \in \Z^{3}$ in the case $\bbd=2$ and $\bbd=3$, respectively, with ranges $i \in [n_{1}]_{0}, j \in [n_{2}]_{0}, k \in [n_{3}]_{0}$, and 
\begin{equation}\label{eq:def-n}
n = n_{1} n_{2} n_{3}.
\end{equation}
As a result, discretization of $u(x),\,x \in \Omega$, yields the vector $u \in \R^{n}$, where we keep the symbol $u$ for simplicity.

Two vertices $v_{1}, v_{2} \in V$ are adjacent, i.e.~form an edge $e = (v_{1},v_{2}) \in E$, if $\|v_{1}-v_{2}\|_{1}=1$. We also denote this by $v_{1} \sim v_{2}$.
\begin{remark}
Informally speaking, $G$ corresponds to the \emph{regular pixel or voxel grid} in 2D and 3D, respectively, and should not be confused with the general notion of a \emph{regular graph}, defined by equal valency $\big|\{v' \in V \colon v' \sim v\}\big|$ for every $v \in V$. In this sense, the regular grid graphs $G$ considered here are \emph{not} regular graphs.
\end{remark}

Consider the one-dimensional discrete derivative operator 
\begin{equation} \label{eq:def-partial}
\partial \colon \R^{m} \to \R^{m-1},
\qquad
\partial_{i,j} = \begin{cases}
-1, & i=j, \\
+1, & j=i+1, \\
0, & \text{otherwise}.
\end{cases}
\end{equation}
Forming corresponding operators $\partial_{1}, \partial_{2}, \partial_{3}$ for each coordinate, conforming to the ranges of $i,j,k$ such that $(i,j,k) \in V$, we obtain the discrete gradient operator 
\begin{equation}\label{eq:def-nabla}
\nabla = \bpm 
\partial_{1} \otimes I_{2} \otimes I_{3} \\
I_{1} \otimes \partial_{2} \otimes I_{3} \\
I_{1} \otimes I_{2} \otimes \partial_{3}
\epm 
\in \R^{p \times n},
\end{equation} 
where $\otimes$ denotes the Kronecker product and $I_{i},\,i=1,2,3$, are identity matrices with appropriate dimensions. The \emph{anisotropic} discretized TV-measure is given by
\begin{equation}\label{eq:def-TV}
\TV(u) := \|\nabla u \|_{1}.
\end{equation}

\section{Properties of Tomographic Sensing Matrices}\label{sec:TomoProp}

Depending on the application, different scanning geometries are used in CT imaging.
In the present study, we adopt a simple discretized model based on an image
$u(x),\,x \in \Omega \subset \R^{\bbd},\, \bbd \in \{2,3\}$, that represents the inhomogeneity of $\Omega$ and 
consists of an array of unknown densities $u_j, j\in[n]$ as defined in Section \ref{sec:Notation}. The model comprises algebraic
equations for these unknowns in terms of measured projection data.
To set up these equations, the sensing device measures line integrals of the object attenuation
coefficient along X-rays $L_i, i\in[m]$, along some known orientations. 
The $i$-th corresponding measurement obeys
\begin{equation}\label{eq:discretization}
b_{i}:\approx\int_{L_i} u(x)dx\approx  \sum_{j=1}^n
u_j\int_{L_i}\mathcal{B}_j(x)dx = \sum_{j=1}^n
u_j A_{ij}.
\end{equation}
The values $A_{ij}$ form the \emph{measurement or projecton matrix} $A$ depend on the choice of the basis function.
We assume $\mathcal{B}_j$ are cube- or square-shaped uniform
basis functions, the classical \emph{voxel} in 3D or \emph{pixel} in 2D.

The main task studied in this paper concerns  estimation of the weights $u_j$ from the recorded measurements $b_{i}$ and solving
the noiseless setting $Au = b$. The matrix $A$ has dimensions $(\# \;\text{rays}=:m)
\times (\# \;\text{voxel/pixel}=:n)$, where $m\ll n$! Since the projection matrix encodes the incident 
relation between rays and voxels/pixels, the projection matrix $A$ will be sparse. Based on additional assumptions on $u$, we will devise in this paper conditions for exact recovery of $u$ from the \emph{underdetermined} linear system $A u = b$.

\subsection{Imaging Set-Up}
For simplicity, we will assume that $\Omega$ is a cube in 3D or a square in 2D and that $\Omega = [0,d]^{3}$
is discretized into $d^3$ voxels, while $\Omega = [0,d]^{2}$ is discretized into $d^2$ pixels.
We consider a parallel ray geometry and choose the projection angles such that the intersection of each 
line with all adjacent cells is constant, thus yielding binary projection matrices after scaling. This 
simplification is merely made in order to obtain a structure in the projection matrix which allows to compute 
relevant combinatorial measures. We stress however that other discretization 
choices are possible and lead to similar results.

\subsubsection{2D Case: $3, \dotsc, 8$ Projection Directions}\label{sec:setup_2D}

We set  $\Omega = [0,d]^{2}$ and obtain the binary projection matrices according to \eqref{eq:discretization} from few projecting directions (three to eight), compare Fig.~\ref{fig:2D_projections}. We summarize the used parameters in Table \ref{tab:1}.

\begin{figure}
\centerline{
\includegraphics[width=0.95\textwidth]{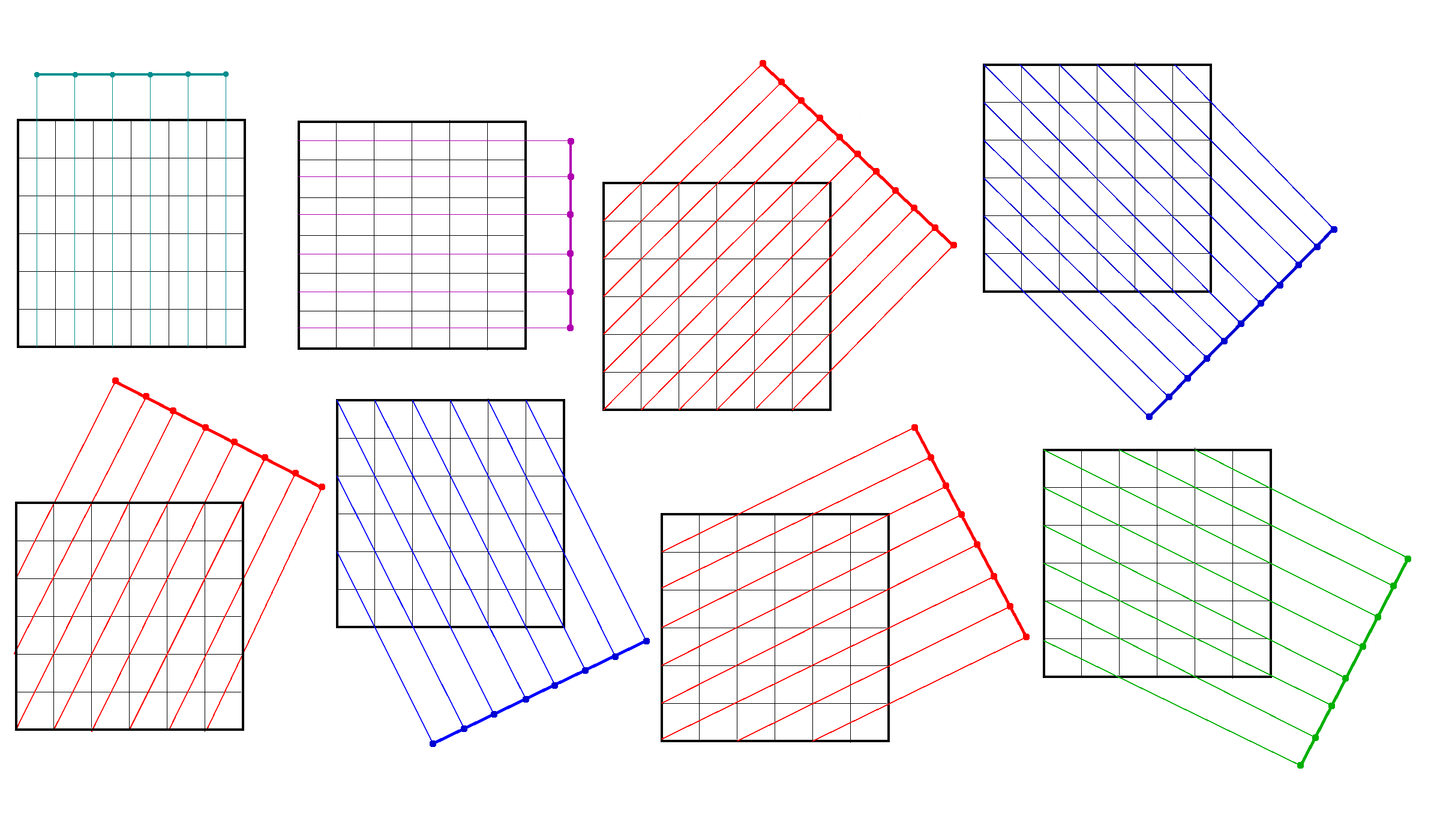}
}
\caption{Eight different projecting directions along with projecting rays for $90^\circ$, $0^\circ$, $\mp 45^\circ$, $\mp \arctan(2)$, 
and $\mp \arctan(0.5)$ (from left to right, top to bottom). Note that the intersection segments for each 
projection ray with all adjacent pixel are equal. As a consequence, we obtain after appropriate scaling, binary projection matrices.
Each sensor resolution varies with the projection angle, however. The illustration above depicts $\Omega = [0,d]^{2}$ with $d=6$.
}
\label{fig:2D_projections}
\end{figure}

\begin{table}%
\begin{tabular}{|c|c|c|r|}
	\hline
\# proj. dir. &  $m$    &   $n$  &  projection angles  \\
\hline
3   & $4d-1$ & $d^2$ & $0^\circ, 90^\circ, 45^\circ$\\
4   & $6d-2$ & $d^2$ & $0^\circ, 90^\circ, \mp 45^\circ$\\
5  & $7d + \lfloor \frac{d}{2}\rfloor-2$ & $d^2$ & $0^\circ, 90^\circ, \mp 45^\circ, \arctan(2)$\\
6  & $8d + 2\lfloor \frac{d}{2}\rfloor-2$ & $d^2$ & $0^\circ, 90^\circ, \mp 45^\circ, \mp \arctan(2)$\\
7  & $9d + 3\lfloor \frac{d}{2}\rfloor-2$& $d^2$ & $0^\circ, 90^\circ, \mp 45^\circ, \mp \arctan(2), \arctan(0.5)$\\
8  & $10d + 4\lfloor \frac{d}{2}\rfloor-2$& $d^2$ & $0^\circ, 90^\circ, \mp 45^\circ, \mp \arctan(2), \mp \arctan(0.5)$\\
\hline
\end{tabular}
\vspace{2mm}
\caption{Dimensions of projection matrices in 2D.}
\label{tab:1}
\end{table} 

\subsubsection{3D Case: $3$ or $4$ Projection Directions}\label{sec:setup_3D}

We consider the imaging set-up depicted by Fig.~\ref{fig:3C3D} and Fig.~\ref{fig:4C3D}.
The projection angles were chosen again such that the intersection of each ray with all adjacent voxels is constant. After
appropriate scaling the resulting measurement matrices are binary as well.

\begin{figure}
\centerline{
\includegraphics[width=0.26\textwidth]{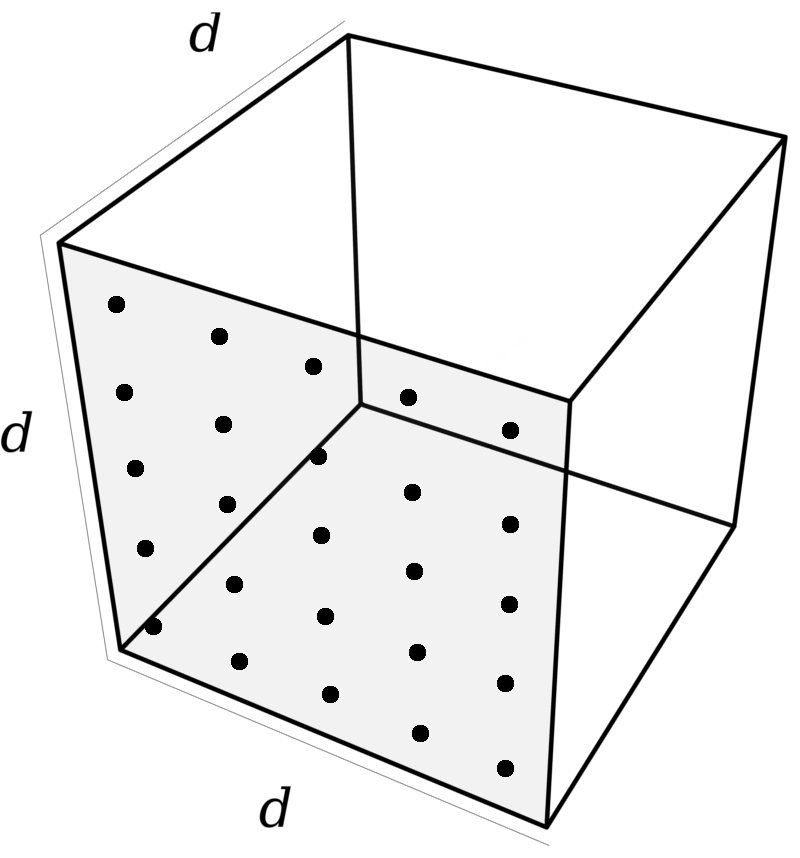}
\hspace{0.025\textwidth}
\includegraphics[width=0.26\textwidth]{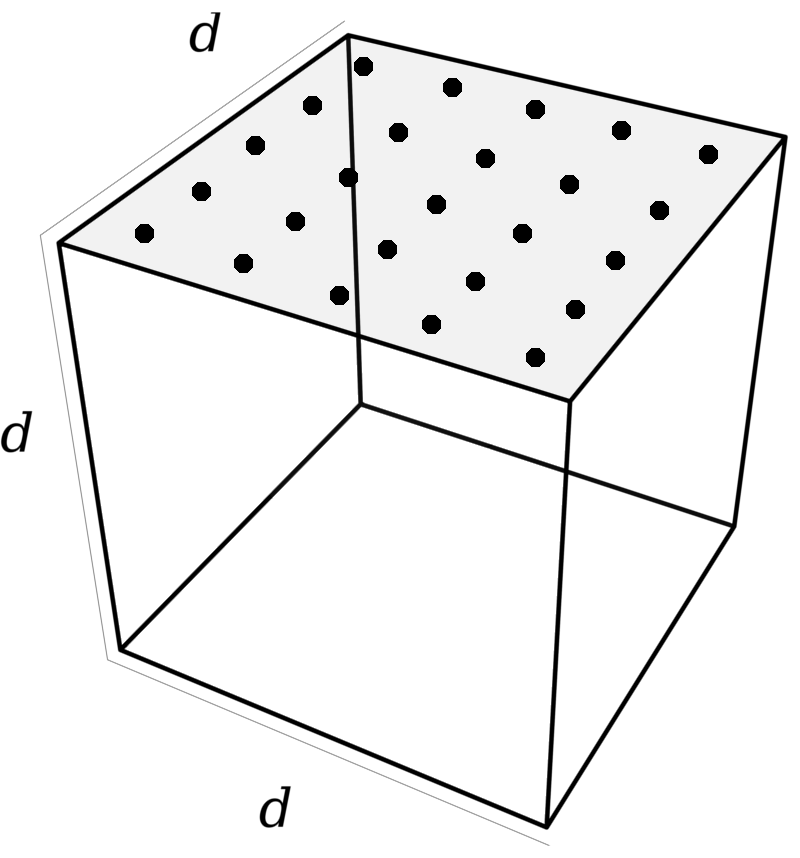}
\hspace{0.025\textwidth}
\includegraphics[width=0.26\textwidth]{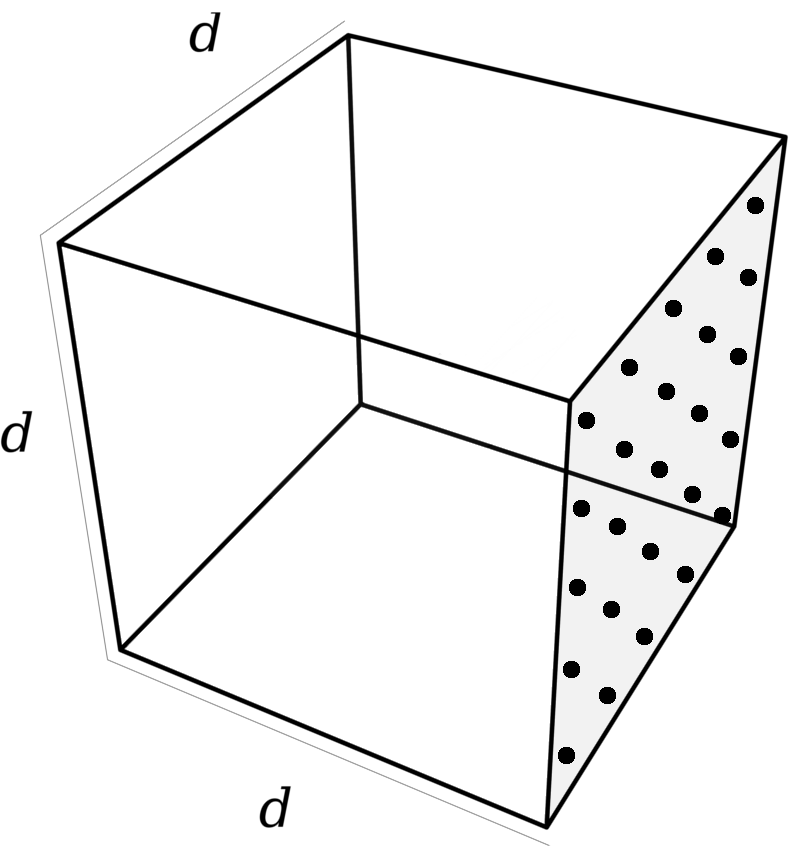}
}
\caption{
Imaging set-up for three orthogonal projections corresponding to each shaded plane of the cube. 
From left to right: Cell centers projected along each direction are shown as dots for the case $d=5$. The cube $\Omega = [0,d]^{3}$ is discretized into $d^{3}$ cells and projected along $3 \cdot d^2$ rays.
}
\label{fig:3C3D}
\end{figure}

\begin{figure}
\centerline{
\includegraphics[width=0.3\textwidth]{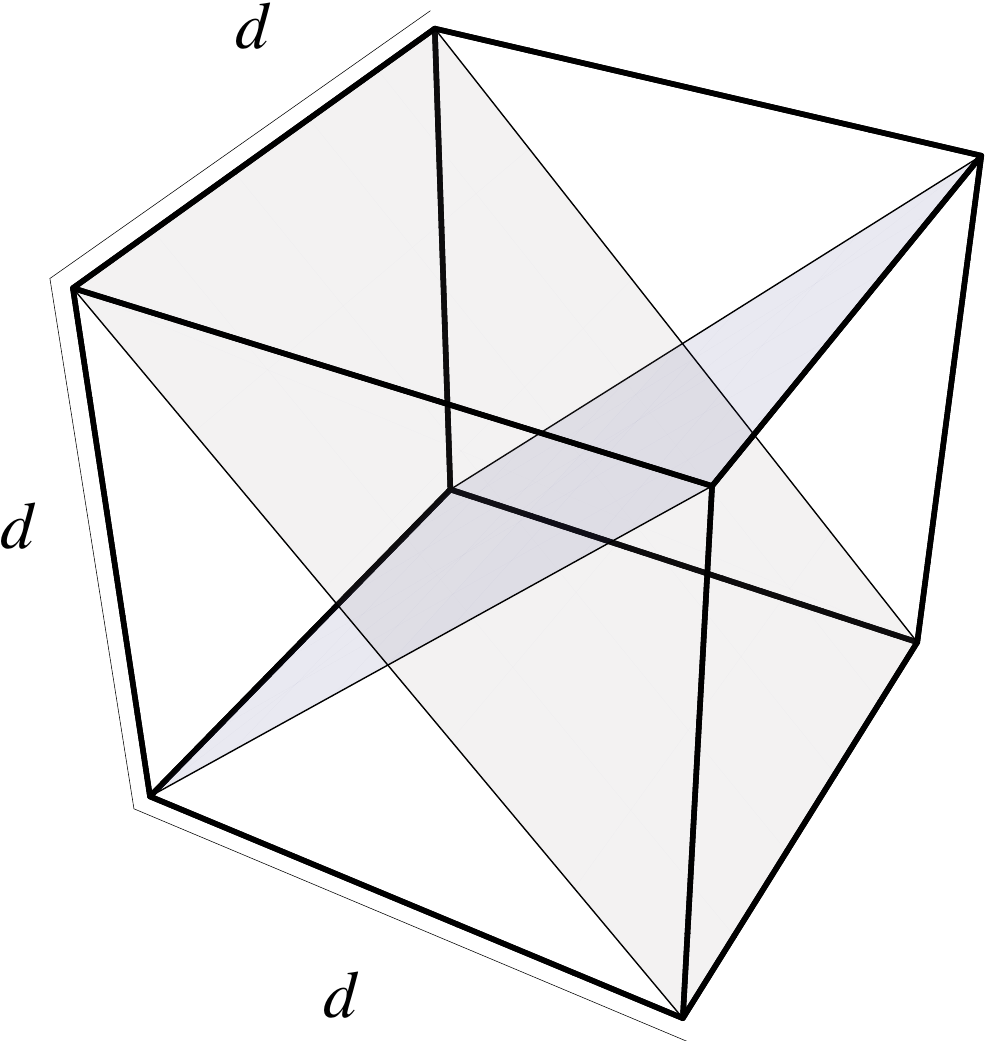}
\hspace{0.025\textwidth}
\includegraphics[width=0.3\textwidth]{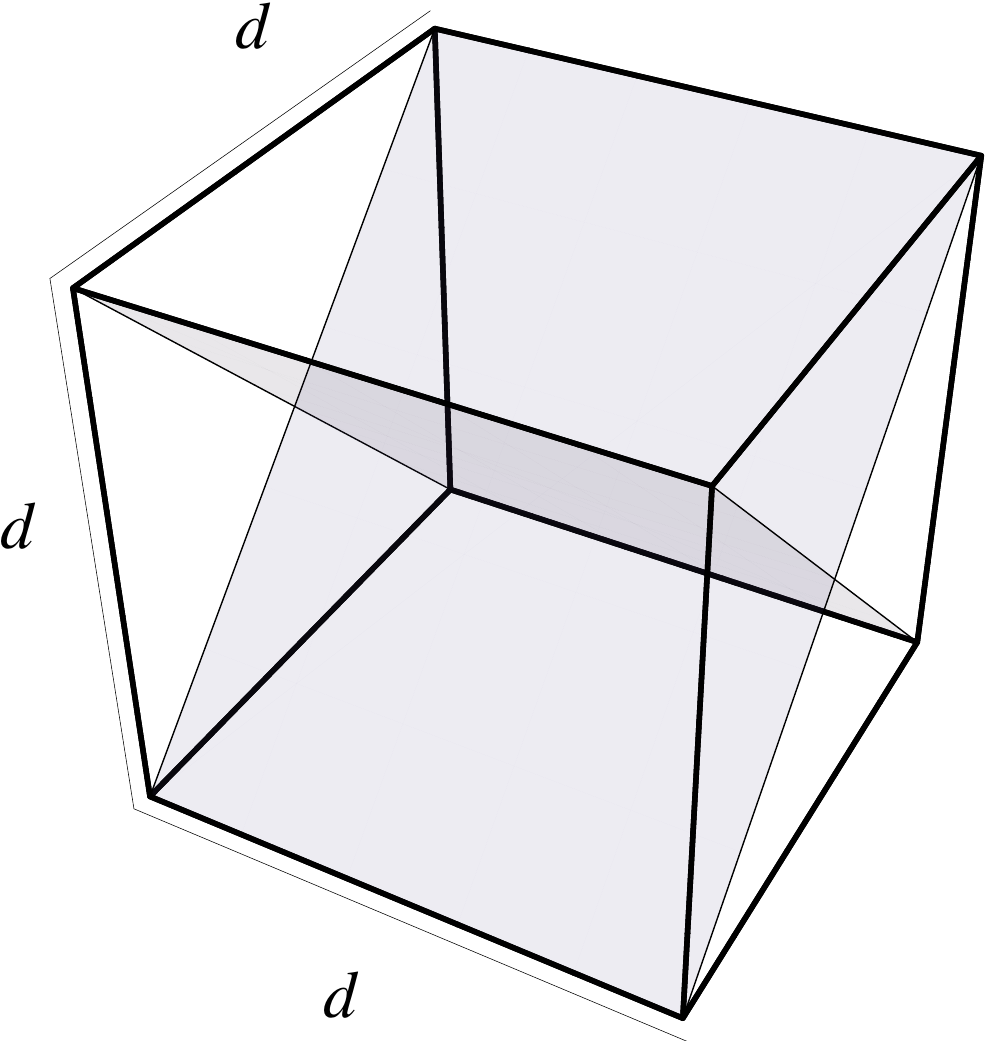}
\hspace{0.025\textwidth}
\includegraphics[width=0.25\textwidth]{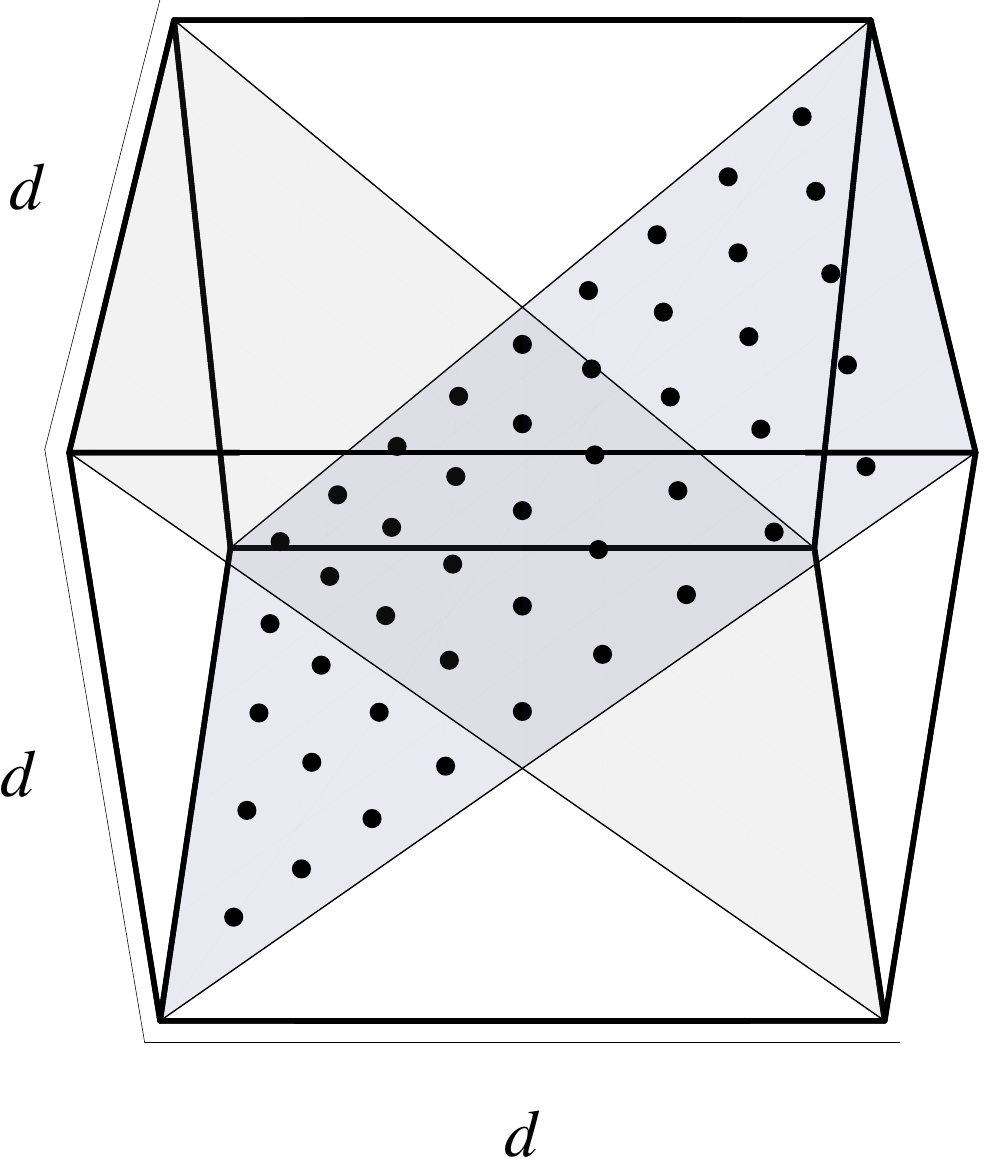}
}
\caption{
Imaging set-up for four projecting directions corresponding to the image planes shown as two pairs in the left and center panel respectively. Right panel: Voxel centers projected onto the first image plane are shown as dots for the case $d=5$. The cube $\Omega = [0,d]^{3}$ is discretized into $d^{3}$ voxel and projected along $4 \cdot d (2 d-1)$ rays.
}
\label{fig:4C3D}
\end{figure}

\subsection{Complete Rank and RIP}
\label{sec:RIP}

For recovery of $k$-sparse signals by \emph{compressed censing (CS)} both necessary
and sufficient conditions have been provided, which not only depend on the sparsity $k$ of the
original signal, but also on the conditions of the sensing matrix $A$. In particular,
compressed sensing aims for a matrix which has high spark, also known as complete rank, high nullspace property order, and a small RIP
constant, as detailed next.

\begin{definition}[\cite{Elad06}]\label{def:spark} 
Let $A\in\R^{m\times n}$ be an arbitrary matrix. Then the \emph{spark} of $A$ denoted by $\spark(A)$ is the
minimal number of linearly dependent columns of $A$.
\end{definition}

Any $k$-sparse solution $\ol{u}$ of a linear system $Au=b$ is unique if $\|\ol{u}\|_0=k<\spark(A)/2$. 

Due to the fact that $A$ is underdetermined, the nullspace of $A$ also plays a
particular role in the analysis of uniqueness of the minimization problem \eqref{eq:l1min}.  
The related so-called \emph{nullspace property (NSP)} is defined as follows.

\begin{definition}\label{def:NSP} 
Let $A\in\R^{m\times n}$ be an arbitrary matrix. Then $A$ has the \emph{nullspace property (NSP) of
order $k$} if, for all $v \in \mc{N}(A) \setminus \{0\}$ and for all index sets $|S| \le k$,
$\|v_S\|_1 < \frac{1}{2}\|v\|_1$.
\end{definition}

Any $k$-sparse solution $\ol{u}$ of a linear system $Au=b$ is the unique solution of
\eqref{eq:l1min}, if $A$ satisfies the nullspace property of order $k$. For nonnegative signals,
the NSP can be characterized in terms of the minimal number of negative components in the
sparsest nullspace vector.

\begin{proposition}\label{char:NP} 
Every $k$-sparse nonnegative vector $\ol{u}$ is the unique positive solution of $Au = A\ol{u}$ 
iff every nonzero nullspace vector has at least $k + 1$ negative (and positive) entries.
\end{proposition}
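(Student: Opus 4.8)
The plan is to prove both implications of the equivalence by contraposition, working directly with the defining linear system and the sign structure of nullspace vectors. Throughout, ``positive solution'' is read as ``nonnegative solution'' in the sense consistent with problem \eqref{eq:TVmin_pos}, and the key observation is that if $u$ and $u'$ are two nonnegative solutions of $Au = b$, then $v = u - u' \in \mc{N}(A)$, and conversely any $v \in \mc{N}(A)$ whose negative part is ``small enough'' relative to a given nonnegative $\ol{u}$ produces a distinct nonnegative solution $\ol{u} - v$. So the combinatorial condition on negative entries of nullspace vectors translates exactly into the ability (or inability) to perturb a given sparse nonnegative $\ol{u}$ within the nonnegative orthant while staying in the solution set.

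For the direction ``if every nonzero nullspace vector has at least $k+1$ negative entries, then every $k$-sparse nonnegative $\ol{u}$ is the unique nonnegative solution'', I would argue as follows. Suppose $\ol{u}$ is $k$-sparse and nonnegative with support $S$, $|S| \le k$, and suppose $u'$ is another nonnegative solution of $Au = A\ol{u}$. Then $v := \ol{u} - u' \in \mc{N}(A)$. The entries of $v$ on $S^c$ are $v_i = -u'_i \le 0$, since $\ol{u}$ vanishes there; hence every \emph{positive} entry of $v$ must lie in $S$, so $v$ has at most $|S| \le k$ positive entries. By the symmetric form of the hypothesis (at least $k+1$ negative \emph{and} positive entries — note the parenthetical, which I will justify, see below), $v$ cannot be a nonzero nullspace vector, so $v = 0$ and $u' = \ol{u}$.

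For the converse, ``if every $k$-sparse nonnegative vector is the unique nonnegative solution, then every nonzero nullspace vector has at least $k+1$ negative entries'', I argue by contraposition: suppose some $v \in \mc{N}(A) \setminus \{0\}$ has at most $k$ negative entries, say the set of indices where $v_i < 0$ is $N$ with $|N| \le k$. Define $\ol{u}$ by $\ol{u}_i = -v_i = |v_i|$ for $i \in N$ and $\ol{u}_i = 0$ otherwise; then $\ol{u} \ge 0$ and $\|\ol{u}\|_0 = |N| \le k$. Set $u' := \ol{u} - v$. On $N$ we get $u'_i = -v_i - v_i = -2v_i > 0$; on indices where $v_i > 0$ we get $u'_i = 0 - v_i$... wait — this would be negative, so I need to be more careful: instead scale, taking $\ol{u}_i = t|v_i|$ on $N$ and choosing $t>0$ large, or better, observe that we want $u' = \ol u - v \geq 0$ everywhere, which on the positive-entry set of $v$ requires $\ol u_i \ge v_i > 0$, contradicting $k$-sparsity of $\ol u$ unless $v$ also has few positive entries. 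This is exactly why the statement needs the parenthetical ``(and positive)'': the correct reading is that the condition must be symmetric, and the honest converse produces a counterexample only when $v$ has $\le k$ negative entries \emph{and} we are free to place the support of $\ol u$ on the negative set while $-v$ serves as the perturbation in the \emph{other} direction. Concretely, from $v$ with $|N| \le k$ negative entries, set $\ol u := (-v)_+ = \max(-v,0)$ supported on $N$, hence $k$-sparse and nonnegative, and set $u' := \ol u + v = v_+ \ge 0$; then $A u' = A\ol u + Av = A\ol u$, and $u' \neq \ol u$ since $v \neq 0$, so $\ol u$ is not the unique nonnegative solution. This step — correctly matching which index set carries the sparse signal and which carries the perturbation, and thereby explaining the parenthetical in the statement — is the main subtlety; everything else is bookkeeping with supports and signs.

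Finally, I would record the symmetry remark explicitly: $v \in \mc{N}(A)$ iff $-v \in \mc{N}(A)$, so ``at least $k+1$ negative entries for every nonzero nullspace vector'' is equivalent to ``at least $k+1$ positive entries for every nonzero nullspace vector'', which is why the parenthetical ``(and positive)'' is automatic and why the first implication above may invoke whichever of the two is convenient. With that symmetry in hand, the two contrapositive arguments close the equivalence.
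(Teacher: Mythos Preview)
The paper states Proposition~\ref{char:NP} without proof, so there is no argument in the paper to compare against. Your proof is correct in both directions: the ``if'' direction correctly observes that for any second nonnegative solution $u'$ the difference $v=\ol u-u'$ has all its positive entries confined to $\supp(\ol u)$, hence at most $k$ of them, forcing $v=0$; and after the false start your construction $\ol u=(-v)_+$, $u'=\ol u+v=v_+$ for the ``only if'' direction is exactly right, producing a $k$-sparse nonnegative $\ol u$ with a distinct nonnegative solution whenever some nonzero $v\in\mc N(A)$ has at most $k$ negative entries. The symmetry remark $v\leftrightarrow -v$ correctly explains the parenthetical. One stylistic note: in a final write-up you should delete the abandoned first attempt and present the working construction directly.
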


\begin{table}%
\begin{tabular}{|c|c|c|c|c|c|c|}
	\hline
dim. &\# proj. dir. &  $m$    &   $n$  &  $\rank(A)$ & $\spark(A)$& NSP\\
\hline
\multirow {5}{*}{$\bbd=2$} & 3   & $4d-1$ & $d^2$ &$4d-4$ &6 &2\\
&4   & $6d-2$ & $d^2$ & $6d-9$& 8 & 3\\
&5  & $7d + \lfloor \frac{d}{2}\rfloor-2$ & $d^2$ & $7d + \lfloor \frac{d}{2}\rfloor-13$&12&5\\
&6  & $8d + 2\lfloor \frac{d}{2}\rfloor-2$ & $d^2$ & $8d + 2\lfloor \frac{d}{2}\rfloor-19$&16&7 \\
&7  & $9d + 3\lfloor \frac{d}{2}\rfloor-2$& $d^2$ & $9d + 3\lfloor \frac{d}{2}\rfloor-23$&16&7\\
&8  & $10d + 4\lfloor \frac{d}{2}\rfloor-2$& $d^2$ & $10d + 4\lfloor \frac{d}{2}\rfloor-29$&16&7\\
\hline
\multirow {2}{*}{$\bbd=3$} & 3   & $3d^2$ & $d^3$ &$3d^2 - 3d + 1$& 8&3\\
& 4   & $8d^2-4d$ & $d^3$ &$8d^2 - 20d + 16$ &15&6\\
\hline
\end{tabular}
\vspace{2mm}
\caption{Properties of projection matrices in 2D and 3D.}
\label{tab:spark}
\end{table} 


The \emph{restricted isometry property (RIP)}, defined next, characterizes matrices which are well conditioned when operating on sparse vectors. 
This is probably the most popular CS condition since it also enables \emph{stable} recovery.
 
\begin{definition}\label{def:RIP-2} 
A matrix $A$ is said to have the \emph{Restricted Isometry Property} 
$RIP_{\delta,k}$ if, for any $k$-sparse vector $u$, the relation 
\begin{equation}\label{eq:RIP-2}
(1-\delta) \|u\|^2 \leq \|A u\|^2 \leq (1+\delta) \|u\|^2 \;,\quad
\delta \in (0,1) 
\end{equation}
holds.
\end{definition}

This property implies that every submatrix $(A^{i_{1}},\dotsc,A^{i_{k}})$ formed by keeping
at most $k$-columns of $A$ has nonzero singular values
bounded from above by $1+\delta$ and from below by $1-\delta$.
In particular, \eqref{eq:RIP-2} implies
that a matrix $A$ cannot satisfy $RIP_{\delta,k}$ if
$k\ge \spark(A)$.

Cand{\`e}s has shown \cite[Thm. 1.1]{Can08} that if 
$A\in RIP_{\delta,2k}$ with $\delta<\sqrt{2}-1$, then all $k$-sparse solutions $\ol{u}$ of \eqref{eq:l1min} are unique.
Moreover,  when measurements are corrupted with noise
\[
b = A\ol{u} + \nu ,
\]
where $\nu$ is an unknown noise term, recovery of $\ol{u}$ is usually performed by
\begin{equation}\label{eq:l1min_noise}
\min \| u \|_1 \qquad {\rm s.t.} \quad \|Au - b\|_2 \le \veps
\end{equation}
where $\veps$ is an upper bound on the size of $\nu$.
As shown in \cite[Thm. 1.2]{Can08}, the RIP condition also implies \emph{stable} recovery even in case
of observation errors.
Provided that the observation error is small enough, $\|\nu\|_2\le \veps$, and
$A\in RIP_{\delta,2k}$ with $\delta<\sqrt{2}-1$, then
the solution $u$ of \eqref{eq:l1min_noise} obeys
\begin{equation*}
 \| u -\ol{u}\|_2 \le C_0k^{-\frac{1}{2}}\| \ol{u} - (\ol{u})_k\|_2 + C_1\veps,
\end{equation*}
where $C_0$ and $C_1$ are explicit constants depending on $\delta$, and $(\ol{u})_k$
is  the vector $\ol{u}$ with all but the $k$-largest entries set to zero.

It has been shown in \cite{Binary-NegRIP-08} that binary matrices cannot satisfy
$RIP_{\delta,k}$ unless the numbers of rows is $\Omega(k^2)$. 

\begin{theorem}\cite[Thm. 1]{Binary-NegRIP-08} Let $A\in\Rmn$ be any $0/1$-matrix
that satisfies $RIP_{\delta,k}$. Then
$$
m\ge\min\left\{\left(\frac{1-\delta}{1+\delta}\right)^2k^2,\frac{1-\delta}{1+\delta}\ n\right\}\ .
$$
\end{theorem}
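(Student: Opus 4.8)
The plan is to derive the stated lower bound on $m$ from the $RIP_{\delta,k}$ hypothesis by counting, for each column, the number of rows in which that column is ``supported'' and relating this to the isometry constraints on pairs of columns. First I would normalize: since $A$ is a $0/1$-matrix, the $\ell_2$-norm of each column $A^{j}$ equals $\sqrt{s_j}$, where $s_j = \|A^j\|_0$ is the number of ones in column $j$; applying the $RIP_{\delta,1}$ bound \eqref{eq:RIP-2} to each standard basis vector $e_j$ gives $1-\delta \le s_j \le 1+\delta$. Because the $s_j$ are integers, this actually forces $s_j = 1$ for every $j$ whenever $\delta<1$ — but that is too strong and signals that the theorem must be understood with a \emph{scaled} $0/1$ matrix, i.e. we work with $\tilde A = A/\sqrt{r}$ for the common column weight $r$ (the setup in Section~\ref{sec:TomoProp} yields exactly such constant-weight binary matrices after scaling). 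So the first real step is to fix this normalization and write each column as having squared norm $1$, with the underlying unscaled column having a fixed number $r$ of ones.

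Next I would look at pairs of columns. For indices $i\neq j$, apply \eqref{eq:RIP-2} to the $2$-sparse vectors $e_i + e_j$ and $e_i - e_j$: subtracting the two inequalities gives $|\langle A^i, A^j\rangle| \le \delta$ (after normalization), i.e. the Gram matrix of any $k$ columns is within $\delta$ of the identity in its off-diagonal entries. Now the combinatorial heart: two scaled binary columns $A^i, A^j$ have inner product $\langle A^i,A^j\rangle = t_{ij}/r$, where $t_{ij}$ is the number of rows in which both columns have a one, and $t_{ij}$ is a nonnegative integer. So the coherence bound reads $t_{ij}\le \delta r$ for all $i\neq j$ in our chosen $k$-subset. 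I would then count incidences: sum over all rows the quantity $\binom{z_\rho}{2}$, where $z_\rho$ is the number of ones in row $\rho$ restricted to the chosen $k$ columns. On one hand this sum equals $\sum_{i<j} t_{ij} \le \binom{k}{2}\delta r$. On the other hand, by convexity (Jensen), $\sum_\rho \binom{z_\rho}{2} \ge m\binom{\bar z}{2}$ where $\bar z = \tfrac{1}{m}\sum_\rho z_\rho = \tfrac{kr}{m}$ is the average row weight. Combining, $m\binom{kr/m}{2}\lesssim \binom{k}{2}\delta r$, which after expanding and simplifying yields a quadratic inequality in $m$ of the form $m \ge c(\delta)\, k^2$ in the regime where the column weight $r$ is not too large, and the ``$\min$'' with $\tfrac{1-\delta}{1+\delta}n$ handles the complementary case where $r$ is large relative to $k$ (there the bound $kr\le$ (total number of ones)$\le$ something forces $m$ to compare with $n$ instead). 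Chasing the constants carefully should reproduce exactly $\left(\tfrac{1-\delta}{1+\delta}\right)^2 k^2$ and $\tfrac{1-\delta}{1+\delta}n$.

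The main obstacle I expect is bookkeeping the two regimes cleanly and extracting the \emph{sharp} constants $\left(\frac{1-\delta}{1+\delta}\right)^2$ rather than some weaker $c(\delta)$. The convexity step is lossy unless one is careful about which rows actually carry mass, and the split into the ``$k^2$ term'' versus the ``$n$ term'' corresponds to whether $kr \lessgtr \frac{1-\delta}{1+\delta}\, mr / (\text{something})$; getting the crossover exactly right, and in particular correctly using both the lower bound $1-\delta$ and the upper bound $1+\delta$ from \eqref{eq:RIP-2} on the \emph{diagonal} (not just the off-diagonal coherence), is where the precise form of the bound comes from. I would therefore be deliberate about applying \eqref{eq:RIP-2} to $e_i+e_j$ to get $t_{ij}/r \le \frac{2\delta}{\,?\,}$ with the right normalization, and separately track that each column has squared norm in $[1-\delta,1+\delta]$ so that $r$ is only defined up to that slack — or, following the paper's simplification, assume exact constant column weight so $r$ is well defined, which makes the normalization $\|A^j\|^2 = 1$ exact and the constants come out as stated.
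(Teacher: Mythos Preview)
The paper does not supply its own proof of this statement: it is quoted verbatim from Chandar's preprint \cite{Binary-NegRIP-08} and used as a black box to obtain the corollary that follows. There is therefore no in-paper argument to compare your proposal against.

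For what it is worth, your outline matches the strategy of the original source: control each column's weight via $RIP_{\delta,1}$, control pairwise overlaps $t_{ij}$ via $RIP_{\delta,2}$, and then double-count ones in a $k$-column submatrix row by row, using convexity of $\binom{\cdot}{2}$ to force $m$ large. Your remark about the normalization is exactly the subtlety one hits when reading the statement literally for an unscaled $0/1$ matrix (applying \eqref{eq:RIP-2} to $e_j$ would pin the integer column weight to $1$); the intended reading, consistent with how the paper uses the result for its scaled binary projection matrices, is the scaled one you adopt. The place to be careful, as you anticipate, is the case split that produces the $\min$: the counting argument yields a lower bound on $m$ in terms of $k$ and the column weight $r$, and one branch (small $r$) gives the $k^2$ term while the other (large $r$, hence many ones overall) forces the comparison with $n$. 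Tracking the $(1-\delta)$ and $(1+\delta)$ factors from the diagonal and off-diagonal RIP constraints separately is indeed how the precise constants $\bigl(\tfrac{1-\delta}{1+\delta}\bigr)^2$ and $\tfrac{1-\delta}{1+\delta}$ emerge.
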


Taking into account that there exists $\spark(A)$-columns in $A$ which are linearly dependent, we obtain, together with
$m\ll n$, the following result.

\begin{corollary} Let $\delta \in (0,1)$. 
A necessary condition for $A$ to satisfy
the $RIP_{\delta,k}$
for all $k$-sparse vectors is that
$$
k\le \min\left\{\frac{1+\delta}{1-\delta}m^{\frac{1}{2}} , \spark(A)-1\right\}\ .
$$
\end{corollary}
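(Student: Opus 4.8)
The plan is to derive the asserted bound as the intersection of two necessary conditions that $RIP_{\delta,k}$ already imposes on the measurement matrix $A$ of Section~\ref{sec:TomoProp}: a spark obstruction, recalled right after Definition~\ref{def:RIP-2}, and the row-count lower bound of \cite[Thm.~1]{Binary-NegRIP-08}, which is applicable because (after the scaling described in Section~\ref{sec:TomoProp}) the projection matrices considered here are $0/1$ matrices with $m \ll n$.

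First I would handle the spark part. If $A$ satisfies $RIP_{\delta,k}$ but $k \ge \spark(A)$, then by Definition~\ref{def:spark} some set of $k$ columns of $A$ is linearly dependent, so there is a nonzero $k$-sparse vector $u$ with $Au = 0$; this contradicts the left inequality in \eqref{eq:RIP-2}, since it would force $0 = \|Au\|^{2} \ge (1-\delta)\|u\|^{2} > 0$. Hence $k \le \spark(A) - 1$. Second, I would apply \cite[Thm.~1]{Binary-NegRIP-08} to $A$ to obtain $m \ge \min\{\big(\tfrac{1-\delta}{1+\delta}\big)^{2} k^{2},\ \tfrac{1-\delta}{1+\delta}\, n\}$, and then argue that the first term attains this minimum: since $\delta$ is a fixed constant in $(0,1)$, the factor $\tfrac{1-\delta}{1+\delta}$ is bounded away from $0$, so the standing assumption $m \ll n$ gives $m < \tfrac{1-\delta}{1+\delta}\, n$, ruling out the second branch. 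Thus $m \ge \big(\tfrac{1-\delta}{1+\delta}\big)^{2} k^{2}$, and taking square roots of these nonnegative quantities yields $k \le \tfrac{1+\delta}{1-\delta}\, m^{1/2}$. Combining the two bounds gives $k \le \min\{\tfrac{1+\delta}{1-\delta}\, m^{1/2},\ \spark(A) - 1\}$, which is the assertion.

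The argument is essentially bookkeeping, so I do not expect a genuine obstacle; the one point that needs care is the case analysis of the minimum in \cite[Thm.~1]{Binary-NegRIP-08}. Discarding the branch $m \ge \tfrac{1-\delta}{1+\delta}\, n$ is exactly where the hypothesis $m \ll n$ enters (the surrounding text flags this with ``together with $m \ll n$''); without that hypothesis one only recovers the weaker statement that $k \le \tfrac{1+\delta}{1-\delta}\, m^{1/2}$ holds whenever $m$ is of strictly smaller order than $n$. Everything else is immediate from the definitions and the cited theorem.
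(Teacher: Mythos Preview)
Your proposal is correct and follows exactly the approach the paper indicates: the paper does not give a formal proof but only the sentence preceding the corollary, which points to combining the spark obstruction noted after Definition~\ref{def:RIP-2} with \cite[Thm.~1]{Binary-NegRIP-08} under the standing assumption $m\ll n$. Your write-up simply makes these two steps explicit, including the case analysis that discards the $\tfrac{1-\delta}{1+\delta}\,n$ branch of the minimum.
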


\textbf{Application to our scenarios.} 
For our particular matrices $A$ defined in Sections \ref{sec:setup_2D} and \ref{sec:setup_3D}
we obtain, along the lines of \cite[Prop.~3.2]{Petra2009}, that $\spark(A)$ is a constant for all dimensions $d$ with $m<n$, while
the number of measurements obeys $O(d^{\bbd-1})$, $\bbd\in\{2,3\}$, see Table \ref{tab:spark}. Compare also Fig.~\ref{fig:nullspace_vec}, left.
However, we cannot be sure that $A$ possesses the $RIP_{\sqrt{2}-1,\sigma}$, with $\sigma=\spark(A)-1$,
unless we compute the singular values of all submatrices containing $\sigma$ or less columns of $A$.

\begin{figure}
\centerline{
\includegraphics[height=0.2\textheight]{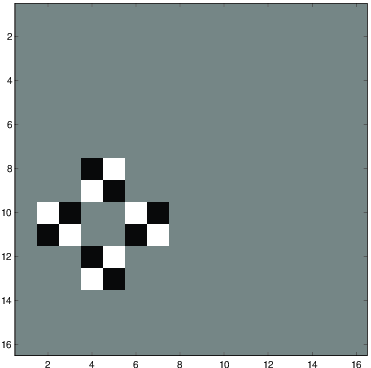}
\includegraphics[height=0.2\textheight]{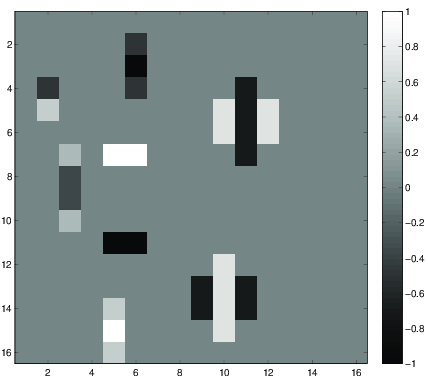}
}
\caption{The \emph{sparsest} nullspace vector $u_\mc{N}\in\mc{N}(A)\setminus\{0\}$ is shown on the left as a $16\times 16$ image, for matrices $A$ from Section \ref{sec:setup_2D} with 6,7 or 8 projecting directions, where $d=16$, compare Table~\ref{tab:1}.
Gray indicates components with value 0, white the value 1 and black the value $-1$. Projections along all rays depicted in
Fig.~\ref{fig:2D_projections} sum up to zero. This shows that $\spark(A)=16$ and the matrix has a NSP of order 7.
These numbers do not change with the problem's size for any $d\ge 16$. The image on the right depicts the bivariate Haar-transformed
nullspace basis vector $Hu_\mc{N}$, which has 32 nonzero elements.
}
\label{fig:nullspace_vec}
\end{figure}

The previous results show that the poor properties  of $A$, from the viewpoint of CS, rest upon the small spark of $A$. 
In order to increase the maximal number  of columns such that all column collections of size $k$ (or less)  
are linearly independent, we can add to the entries of $A$ small random numbers. Due to the fact that 
$\rank(A)$ almost equals $m$ in all considered situations, the probability that $k$-arbitrary columns 
are linearly independent slowly decreases from 1, when $k < \spark(A)$, to 0, when $k> \rank(A)$. 
The perturbed matrix $\tilde A$ is computed by uniformly perturbing the non-zero entries $A_{ij} > 0$
to obtain $\tilde A_{ij} \in [A_{ij} - \epsilon,A_{ij} + \epsilon]$, and by normalizing subsequently all column vectors 
of $\tilde A$.
In practice, such perturbations can be implemented by discretizing the image by different basis
functions or choose their locations on an irregular grid.

As argued in Section \ref{sec:overview} and illustrated by Figures \ref{fig:set-up} and \ref{fig:TV_vs_ell1}, considering functions $u(x)$ with sparse gradients and the corresponding optimization criterion $\TV(u)$ for recovery \eqref{eq:TVmin}, we may boost recovery performance in severely 
undersampled tomographic scenarios, despite the poor properties of measurement matrices $A$.

\section{Sparsity and TV-Based Reconstruction}\label{sec:SparseTV}
As discussed in Section \ref{sec:related_work}, it has been well known empirically that solving the problem
\begin{equation}\label{eq:TVnoise}
 \min \|\nabla u\|_1 \qquad {\rm s.t.} \quad \|Au - b\|_2 \le \varepsilon
\end{equation}
can provide high-quality and stable image recovery.
Until the recent work \cite{NeedellStableTV}, however, it had been an open problem to provide provable \emph{theoretical}
guarantees, beyond incomplete Fourier measurements \cite{Candes_2006_TV,Candes_UP_2006}.  
The gradient operator $\nabla$ is not an orthonormal basis or a tight frame, 
thus neither the standard theory of CS nor the theoretical extensions in \cite{Needell_Redundant}
concerning the \emph{analysis model} apply to \eqref{eq:TVnoise},
even for images with truly sparse gradient $\nabla u$.

The recent work \cite{NeedellStableTV,Needell_multidim} \emph{proves} that stable recovery is possible via
the convex program \eqref{eq:TVnoise} and
considers a general matrix $A$ which is
incoherent with the multidimensional Haar wavelet transform and satisfies a RIP condition.
The Haar wavelet transform provides a sparsifying basis for 2D and 3D images and is closely 
related to the discrete gradient operator. In the remainder of this section, we denote the  discrete multidimensional Haar wavelet transform
by $H$ and refer the reader to the definition of \cite[p.~6]{Needell_multidim}.
The following theorem summarizes the main results of \cite[Thm.~5, Thm.~6]{NeedellStableTV}, and \cite{Needell_multidim}
and specializes them to the case of \emph{anisotropic} TV \eqref{eq:def-TV} as considered in the present paper, see also the Remarks following \cite[Thm.~6]{NeedellStableTV}
and \cite[Main Thm.]{Needell_multidim}.
 
\begin{theorem}\label{thm:TVstable}
Let $d=2^N$ be a power of two and $n=d^\bbd$, $\bbd\in\{2,3\}$. Further, let $H$ be the discrete 
multidimensional Haar transform, and let $A\in\R^{m\times n}$
such that $AH^{-1}$ satisfies $RIP_{\delta,5k}$ with $\delta<\frac{1}{3}$.
Then for any $\ol{u}\in\R^{n}$ with $b=A\ol{u}+\nu$ and $\|\nu\|_2\le \veps$,
the solution $u$ of \eqref{eq:TVnoise}
satisfies the \emph{gradient error bound}
\begin{equation*}
 \|\nabla u - \nabla \ol{u} \|_1 \le \|\nabla \ol{u} - (\nabla \ol{u})_k\|_1+\sqrt{k} \veps ,
\end{equation*}
and the \emph{signal error bound}
\begin{equation*}
 \|u-\ol{u}\|_2 \le \log \left(\frac{n}{k}\right)\frac{\|\nabla \ol{u} - (\nabla \ol{u})_k\|_1}{\sqrt{k}}+ \veps .
\end{equation*}
\end{theorem}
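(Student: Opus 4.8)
The plan is to obtain the statement as a specialization of \cite{NeedellStableTV,Needell_multidim} to the anisotropic functional \eqref{eq:def-TV}, so most of the work is to follow their argument and check that it survives the anisotropic and $\bbd=3$ specialization. Write $h := u - \ol{u}$ for the recovery error. I would first record the two elementary consequences of the hypotheses: since both $u$ and $\ol{u}$ are feasible for \eqref{eq:TVnoise}, $\|Ah\|_2 \le 2\veps$; and since $u$ is optimal, $\|\nabla u\|_1 \le \|\nabla\ol{u}\|_1$. Let $S$ index the $k$ entries of $\nabla\ol{u}$ of largest magnitude and put $\eta := \|\nabla\ol{u}-(\nabla\ol{u})_k\|_1 = \|(\nabla\ol{u})_{S^c}\|_1$. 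Expanding $\|\nabla u\|_1 = \|\nabla\ol{u}+\nabla h\|_1$ over $S$ and $S^c$ and using optimality yields the standard cone inequality $\|(\nabla h)_{S^c}\|_1 \le \|(\nabla h)_S\|_1 + 2\eta$ for the gradient of the error. The hypothesis $d=2^N$ is used only to make the dyadic multidimensional Haar transform $H$ a genuine orthonormal basis with a clean scale decomposition.

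Next I would establish the gradient error bound. The analytic heart of \cite{NeedellStableTV,Needell_multidim}, and the step I expect to be the main obstacle, is a discrete Sobolev-type comparison between the scale bands of $H$ and the discrete gradient: for every mean-zero $g\in\R^n$ and every dyadic scale $j$ the Haar coefficients in band $j$ obey an estimate of the form $\|(Hg)_{\Lambda_j}\|_2 \lesssim \|\nabla g\|_1$, so that summing over the $O(\log(n/k))$ relevant bands controls the appropriate $\ell_2$-tail of $Hg$ by $\|\nabla g\|_1$ up to a single logarithmic factor. Unlike the one-dimensional situation, in dimension $\bbd\ge2$ a single Haar coefficient cannot be telescoped onto gradient entries without loss, and this accumulation over scales is exactly where the $\log$ in the signal bound originates. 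Since $AH^{-1}\in RIP_{\delta,5k}$, the map $g\mapsto Ag = (AH^{-1})(Hg)$ is near-isometric on the cone cut out by this comparison together with the cone inequality above, and I would use that to extract a lower bound $\|Ah\|_2 \gtrsim \|(\nabla h)_S\|_1/\sqrt{k}$ modulo the tail $\eta$. Inserting this into the cone inequality and rearranging -- the factor $5k$ is what the shelling decomposition in the RIP step requires, and $\delta<\frac{1}{3}$ is calibrated so that the absorbed constants leave coefficient $1$ in front of $\eta$ and $\sqrt{k}$ in front of $\veps$ -- gives $\|\nabla u-\nabla\ol{u}\|_1 \le \eta + \sqrt{k}\,\veps$.

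For the signal error bound I would use that $H$ is orthonormal, so $\|u-\ol{u}\|_2 = \|Hh\|_2$, treating the scaling (mean) coefficient of $h$ separately -- it is bounded by a constant times $\veps$ by applying the RIP to the normalized constant vector. The comparison lemma applied to $h$ makes $Hh$ compressible, with $\ell_1$-tail $\sigma_{ck}(Hh)_1 \lesssim \log(n/k)\,\|\nabla h\|_1$; combined with $\|AH^{-1}(Hh)\|_2 = \|Ah\|_2 \le 2\veps$ and $AH^{-1}\in RIP_{\delta,5k}$, a Cand\`{e}s-type RIP reconstruction inequality (cf.~\cite[Thm.~1.2]{Can08}) gives $\|Hh\|_2 \lesssim \log(n/k)\,\|\nabla h\|_1/\sqrt{k} + \veps$, the final $+\veps$ being inherited directly from $\|Ah\|_2 \le 2\veps$ rather than from the gradient bound. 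Feeding in the gradient error bound from the previous step and tracking constants carefully -- this is the other place where $\delta<\frac{1}{3}$ is used, pinning the leading constants to $1$ -- yields $\|u-\ol{u}\|_2 \le \log(n/k)\,\|\nabla\ol{u}-(\nabla\ol{u})_k\|_1/\sqrt{k} + \veps$, as claimed.

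The only place where genuine work beyond transcribing \cite{NeedellStableTV} is needed -- and the part I expect to be hardest -- is the scale-wise Haar--gradient comparison for the \emph{trivariate} Haar system, following \cite{Needell_multidim}: in $\bbd=3$ each scale of $H$ splits into seven wavelet sub-bands, and each must be controlled by the axiswise first differences that make up the anisotropic $\TV$. For anisotropic TV the rest of the reduction is smooth, since $\|\nabla u\|_1$ is literally the $\ell_1$-norm of the stacked axiswise differences, i.e.~a sum of one-dimensional total variations, and both the cone argument and the comparison lemma only use this $\ell_1$ structure; once the trivariate comparison lemma is in place, the cone/RIP machinery is the standard one, with constants fixed entirely by the hypotheses $RIP_{\delta,5k}$ and $\delta<\frac{1}{3}$.
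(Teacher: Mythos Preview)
The paper does not prove Theorem~\ref{thm:TVstable}. It is stated without proof as a summary of \cite[Thm.~5, Thm.~6]{NeedellStableTV} and \cite[Main Thm.]{Needell_multidim}, specialized to the anisotropic $\TV$ functional \eqref{eq:def-TV}; the only accompanying text is a pointer to the Remarks following those theorems. There is therefore no proof in the paper to compare against.

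Your proposal is a faithful sketch of the Needell--Ward argument itself: the cone inequality for $\nabla h$, the scale-wise Haar--gradient comparison lemma, the RIP step on $AH^{-1}$ applied to $Hh$, and the Cand\`{e}s-type reconstruction inequality are exactly the ingredients of \cite{NeedellStableTV,Needell_multidim}, and you correctly single out the trivariate comparison lemma as the nontrivial piece for $\bbd=3$. One caution: you attribute the unit constants in the stated bounds to the calibration $\delta<\tfrac{1}{3}$, but the original theorems carry absolute constants that the paper has suppressed in its summary; the choice $\delta<\tfrac{1}{3}$ does not literally force coefficient~$1$ in front of $\eta$, $\sqrt{k}\,\veps$, and $\veps$, so that part of your justification should be read as ``up to universal constants'' rather than as an exact tracking.
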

Note that recovery is exact when $\nabla \ol{u}$ is exactly $k$-sparse and $\|\nu\|_2=0$.

The RIP assumption on $AH^{-1}=AH^\top$ implies that $\mc{N}(A)$ cannot
contain any signals admitting a $k$-sparse wavelet expansion, apart from the zero vector, since
$\|Av\|_2 = \|A H^{-1}  H v \|_2 \approx (1 \pm \delta) \|Hv\|_2 = (1 \pm \delta)\|v\|_2$,
with the last equality holding because $H^\top H=I$. 
 

There exist sensing matrices $A\in\Rmn$ which satisfy the above conditions,
e.g.~$RIP_{1/3,k}$, where $k$ can be as large
as $O(m/\log(m/n))$. This class includes matrices with 
i.i.d. standard Gaussian or $\pm 1$ entries, random submatrices of the Fourier
transform and other orthogonal matrices.

In our scenario, however, due to the low RIP order of the tomographic projection matrix $A$,
for any image dimension $d$, the RIP order of $AH^{-1}$ does not improve significantly.
To illustrate this point, let us consider further the bivariate discrete Haar transform $H$ and a sparse nullspace vector
$u_\mc{N}$ with $\|u_\mc{N}\|_0=16$, depicted in Fig.~\ref{fig:nullspace_vec}, left panel, of the 2D
projection matrix $A$ from 6, 7 or 8 projections. Then
$0=\|A u_\mc{N}\|_2 = \|A H^{-1}  H u_\mc{N}\|_2$  holds with $\|H u_\mc{N}\|_0=32$,
see Fig.~\ref{fig:nullspace_vec}, right. Thus, the matrix $A H^{-1}$ cannot satisfy RIP of an order
larger than $32-1$, and this holds for any $d\ge 16$. Consequently, suppose it does accordingly satisfy $RIP_{1/3,31}$, then
Thm.~\ref{thm:TVstable} would imply exact recovery of any image with a $6$-sparse image gradient.
Unfortunately, such an extremely low sparsity is of limited use for practical applications.

\section{Co-Sparsity and TV-Based Reconstruction}\label{sec:CoSparse}We introduce some basic definitions related to the cosparsity of a given analysis operator $B$ in Section \ref{sec:co-definitions} followed by uniqueness results from \cite{Lu2008,Nam2013} in Section \ref{sec:co-sparsity-uniqueness}. The corresponding conditions imply bounds for the number $m$ of measurements, depending on the cosparsity of the vector $u$, that should be reconstructed, with respect to $B$. We apply these results in Section \ref{sec:co-TV} to the discrete gradient operator $B = \nabla$ given by
\eqref{eq:def-nabla}. This requires to estimate the dimension of the subspace of $\ell$-cosparse vectors. We relate this problem to the isoperimetric problem on grid graphs studied by \cite{Bollobas1991}. In this more general way, we reproduce the estimate proved differently in \cite{Nam2013} for the 2D case and additionally provide an estimate for the 3D case.

\subsection{Definitions} \label{sec:co-definitions}
Let $B$ be any given analysis operator.
\begin{definition}[cosparsity, cosupport]
The \emph{cosparsity} of $u \in \R^{n}$ \emph{with respect to} $B \in \R^{p \times n}$ is
\begin{equation}\label{eq:def-cosparsity}
\ell := p - \|B u\|_{0},
\end{equation}
and the \emph{cosupport} of $u$ \emph{with respect to} $B$ is
\begin{equation}\label{eq:def-cosupport}
\Lambda := \{ r \in [p] \colon (B u)_{r} = 0 \},\qquad
|\Lambda| = \ell.
\end{equation}
\end{definition}
We denote by $B_{r}$ the $r$-th row of the matrix $B$ and by $B_{\Lambda}$ the submatrix of $B$ formed by the \emph{rows} indexed by $\Lambda \subset [p]$. Thus, a $\ell$-cosparse vector $u$ satifies $B_{\Lambda} u = 0$, hence is contained in the subspace
\begin{equation} \label{eq:def-W-Lambda}
\mc{W}_{\Lambda} := \mc{N}(B_{\Lambda}).
\end{equation}
In this connection, we define the basic function
\begin{equation}\label{eq:def-kappa_B}
\kappa_{B}(\ell) := \max_{|\Lambda| \geq \ell} \dim \mc{W}_{\Lambda}.
\end{equation}

\subsection{Basic Uniqueness results} 
\label{sec:co-sparsity-uniqueness}

This section collects some results from \cite{Nam2013} that were derived based on \cite{Lu2008}.
\begin{proposition}[Uniqueness with known cosupport]\label{prop:unique-cs-known}
Let $A \in \R^{m \times n}$ and $B \in \R^{p \times n}$ be given measurement and analysis operators and assume the rows of the matrix $\bsm A \\ B \esm$
are linearly independent. Then, if the cosupport $\Lambda \subset [p]$ of the $\ell$-cosparse vector $u \in \R^{n}$ is known, the condition
\begin{equation}\label{eq:cond-unique-cs-known}
\kappa_{B}(\ell) \leq m
\end{equation}
is necessary and sufficient for recovery of every such vector from the measurements $b = A u$.
\end{proposition}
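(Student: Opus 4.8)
The statement is a characterization of when a known‑cosupport $\ell$‑cosparse vector can be uniquely recovered from $b = Au$, so I would attack it as a linear‑algebra question about the subspace $\mc{W}_\Lambda = \mc{N}(B_\Lambda)$. The key observation is that, once $\Lambda$ is fixed, the admissible vectors are exactly those in $\mc{W}_\Lambda$, and uniqueness of recovery from $b = Au$ among vectors of $\mc{W}_\Lambda$ is equivalent to the map $A|_{\mc{W}_\Lambda}$ being injective, i.e.\ $\mc{N}(A) \cap \mc{W}_\Lambda = \{0\}$. So the plan is: (i) reduce ``unique recovery for every $\ell$‑cosparse $u$ with cosupport $\Lambda$'' to the injectivity condition $\mc{N}(A) \cap \mc{N}(B_\Lambda) = \{0\}$; (ii) show this injectivity condition is implied by $\dim \mc{W}_\Lambda \le m$ using the linear‑independence hypothesis on the rows of $\bsm A \\ B \esm$; and (iii) show, conversely, that if $\dim \mc{W}_\Lambda > m$ then injectivity must fail, and finally pass from a fixed $\Lambda$ to the worst case defining $\kappa_B(\ell)$.

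For the sufficiency direction (ii), I would argue as follows. Suppose $v \in \mc{N}(A) \cap \mc{W}_\Lambda$. Then $Av = 0$ and $B_\Lambda v = 0$, so $\bsm A \\ B_\Lambda \esm v = 0$. The rows of $\bsm A \\ B \esm$ are linearly independent, hence so are the rows of the submatrix $\bsm A \\ B_\Lambda \esm$; therefore this matrix has rank $m + \ell$ (it has $m + \ell$ rows), so its nullspace has dimension $n - m - \ell$. That alone does not immediately give $v = 0$, so I need the dimension count: $\mc{W}_\Lambda = \mc{N}(B_\Lambda)$ has dimension $n - \rank(B_\Lambda) = n - \ell$ (again by row independence of $B$). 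Restricting $A$ to $\mc{W}_\Lambda$, the rank–nullity theorem gives $\dim(\mc{N}(A) \cap \mc{W}_\Lambda) = \dim \mc{W}_\Lambda - \dim(A\,\mc{W}_\Lambda) = \dim\mc{W}_\Lambda - \rank\bigl(A|_{\mc{W}_\Lambda}\bigr)$. Now $\rank\bigl(A|_{\mc{W}_\Lambda}\bigr) = \dim\mc{W}_\Lambda$ exactly when the $\ell + m = (n - \dim\mc{W}_\Lambda) + m$ combined rows stay independent, which is the hypothesis; so $\dim\mc{W}_\Lambda \le m$ forces $A|_{\mc{W}_\Lambda}$ to be injective whenever its image can have dimension $\dim\mc{W}_\Lambda \le m$, i.e.\ the constraint is exactly $\kappa_B(\ell) \le m$.

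For necessity (iii), the argument is the contrapositive: if $\dim\mc{W}_\Lambda > m$ for some cosupport $\Lambda$ with $|\Lambda| = \ell$ (equivalently $\kappa_B(\ell) > m$), then $A$ restricted to $\mc{W}_\Lambda$ maps a space of dimension $> m$ into $\R^m$, so it has a nontrivial kernel vector $v \in \mc{W}_\Lambda \setminus \{0\}$ with $Av = 0$; then for any $\ell$‑cosparse $u$ with cosupport $\Lambda$, the vector $u + v$ is also in $\mc{W}_\Lambda$ and produces the same measurements $A(u+v) = Au = b$, so recovery is not unique. Finally I would note that $\kappa_B(\ell)$ is defined via the $\max$ over $|\Lambda| \ge \ell$, but a vector that is $\ell$‑cosparse is supported on some cosupport of size exactly $\ell$ and sits inside $\mc{W}_{\Lambda'}$ for any $\Lambda' \subseteq \Lambda$; one checks that the relevant worst case is still captured by $\kappa_B(\ell)$, so the condition $\kappa_B(\ell) \le m$ is the clean necessary‑and‑sufficient threshold.

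\textbf{Main obstacle.} The bookkeeping with the $\max$ and the inequality $|\Lambda| \ge \ell$ in the definition of $\kappa_B$ is the one genuinely fiddly point: I need to be careful that ``every $\ell$‑cosparse vector'' ranges over vectors whose cosupport has cardinality \emph{exactly} $\ell$, whereas $\kappa_B(\ell)$ maximizes $\dim\mc{W}_\Lambda$ over all $\Lambda$ with $|\Lambda| \ge \ell$. Since enlarging $\Lambda$ shrinks $\mc{W}_\Lambda$, a vector that is $\ell$‑cosparse (cosupport size $\ell$) lies in a subspace $\mc{W}_\Lambda$ of the largest possible dimension among those indexed by cosupports of size $\ge$ something $\le \ell$; making this monotonicity argument precise — and confirming that the worst case over exact cosparsity $\ell$ coincides with $\kappa_B(\ell)$ rather than $\kappa_B$ evaluated elsewhere — is where the proof needs the most care. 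Everything else is a direct application of rank–nullity together with the standing hypothesis that the stacked rows of $\bsm A \\ B \esm$ are linearly independent.
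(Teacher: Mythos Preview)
The paper does not give its own proof of this proposition; it is quoted from \cite{Nam2013} (based on \cite{Lu2008}) without argument. Your linear-algebraic approach via injectivity of $A|_{\mc{W}_\Lambda}$ is the standard one and is correct in outline. Two remarks that would tighten it considerably:

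\emph{Sufficiency is simpler than you make it.} In step (ii) you correctly observe that $\mc{N}(A)\cap\mc{W}_\Lambda=\mc{N}\bsm A\\ B_\Lambda\esm$ has dimension $n-m-\ell$, and then write ``That alone does not immediately give $v=0$''. But it does: under the row-independence hypothesis the rows of $B_\Lambda$ are independent, so $\dim\mc{W}_\Lambda=n-|\Lambda|=n-\ell$ for \emph{every} cosupport of size $\ell$, and hence $\kappa_B(\ell)=n-\ell$ exactly. The assumption $\kappa_B(\ell)\le m$ therefore reads $n-\ell\le m$, i.e.\ $n-m-\ell\le 0$, which forces the nullspace to be $\{0\}$ directly. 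Your subsequent detour through rank--nullity for $A|_{\mc{W}_\Lambda}$ is redundant.

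\emph{Your ``main obstacle'' is not an obstacle.} For the same reason, the bookkeeping with $|\Lambda|\ge\ell$ versus $|\Lambda|=\ell$ in the definition of $\kappa_B$ dissolves: $\dim\mc{W}_\Lambda=n-|\Lambda|$ is strictly decreasing in $|\Lambda|$, so the maximum over $|\Lambda|\ge\ell$ is attained at $|\Lambda|=\ell$, and an $\ell$-cosparse vector sits in a subspace of exactly this maximal dimension. The monotonicity you worry about is immediate from the hypothesis, not a fiddly point.
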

Proposition \ref{prop:unique-cs-known} says that if the dimension of the subspace $\mc{W}_{\Lambda}$ increases, then more measurements are needed for recovery of $\ell$-cosparse vectors $u \in \mc{W}_{\Lambda}$. The dimension $\dim \mc{W}_{\Lambda}$ increases for decreasing $\ell$.

\begin{proposition}[Uniqueness with unknown cosupport]\label{prop:unique-cs-unknown}
Let $A \in \R^{m \times n}$ and $B \in \R^{p \times n}$ be given measurement and analysis operators, and assume the rows of the matrix $\bsm A \\ B \esm$
are linearly independent. Then a necessary condition for uniqueness of a $\ell$-cosparse solution $u$ to the measurement equations $A u = b$ is
\begin{equation}
\tilde\kappa_{B}(\ell) \leq m,\qquad
\tilde\kappa_{B}(\ell) := \max\big\{ \dim(\mc{W}_{\Lambda_{1}} + \mc{W}_{\Lambda_{2}}) \colon |\Lambda_{i}| \geq \ell,\, i=1,2 
\big\},
\end{equation}
whereas a sufficient such condition is
\begin{equation}
\kappa_{B}(\ell) \leq \frac{m}{2},
\end{equation}
with $\kappa_{B}$ from \eqref{eq:def-kappa_B}.
\end{proposition}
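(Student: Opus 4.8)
The plan is to reduce both parts to a standard union-of-subspaces argument, in the spirit of the proof of Proposition~\ref{prop:unique-cs-known} and following \cite{Lu2008,Nam2013}. Write $\mc{C}_{\ell} := \bigcup_{|\Lambda| \ge \ell} \mc{W}_{\Lambda}$ for the set of all at least $\ell$-cosparse vectors. The first step I would establish is the equivalence: every $u \in \mc{C}_{\ell}$ is the unique $\ell$-cosparse solution of $Au' = Au$ if and only if $\mc{N}(A) \cap (\mc{W}_{\Lambda_{1}} + \mc{W}_{\Lambda_{2}}) = \{0\}$ for all cosupports with $|\Lambda_{1}|, |\Lambda_{2}| \ge \ell$. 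Indeed, two distinct $\ell$-cosparse solutions $u^{1} \in \mc{W}_{\Lambda_{1}}$, $u^{2} \in \mc{W}_{\Lambda_{2}}$ of the same system produce $0 \ne u^{1} - u^{2} \in \mc{N}(A) \cap (\mc{W}_{\Lambda_{1}} + \mc{W}_{\Lambda_{2}})$ (using that the $\mc{W}_{\Lambda_{i}}$ are subspaces, so the set of differences $\{w^1-w^2\}$ coincides with the sum $\mc{W}_{\Lambda_1}+\mc{W}_{\Lambda_2}$), and conversely any nonzero $v = w^{1} + w^{2}$ in such an intersection yields the distinct $\ell$-cosparse solutions $w^{1}$ and $-w^{2}$ of $Au' = Aw^{1}$.

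For the \textbf{necessity} of $\tilde\kappa_{B}(\ell) \le m$ I argue by contraposition. Linear independence of the rows of $\bsm A \\ B \esm$ implies in particular that the $m$ rows of $A$ are linearly independent, hence $\rank A = m$ and $\dim \mc{N}(A) = n - m$. If $\tilde\kappa_{B}(\ell) > m$, pick $|\Lambda_{1}|, |\Lambda_{2}| \ge \ell$ realizing $\dim(\mc{W}_{\Lambda_{1}} + \mc{W}_{\Lambda_{2}}) = \tilde\kappa_{B}(\ell)$. Then $\dim \mc{N}(A) + \dim(\mc{W}_{\Lambda_{1}} + \mc{W}_{\Lambda_{2}}) > (n - m) + m = n$, so these two subspaces of $\R^{n}$ meet nontrivially and, by the first step, uniqueness fails.

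For the \textbf{sufficiency} of $\kappa_{B}(\ell) \le m/2$, I would fix $|\Lambda_{1}|, |\Lambda_{2}| \ge \ell$, set $\Gamma := \Lambda_{1} \cap \Lambda_{2}$, and verify the criterion from the first step. Linear independence of the rows of $\bsm A \\ B \esm$ forces $m + p \le n$ (so $p \le n$), $\rank B_{\Lambda} = |\Lambda|$ for every $\Lambda \subseteq [p]$, and $\rank \bsm A \\ B_{\Gamma} \esm = m + |\Gamma|$. Thus $\dim \mc{W}_{\Lambda_{i}} = n - |\Lambda_{i}|$, so $\kappa_{B}(\ell) \le m/2$ gives $n - |\Lambda_{i}| \le m/2$, i.e.\ $|\Lambda_{i}| \ge n - m/2$. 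Hence $|\Gamma| \ge |\Lambda_{1}| + |\Lambda_{2}| - p \ge 2n - m - p \ge n - m$, using $p \le n$, so $m + |\Gamma| \ge n$; as these $m+|\Gamma|$ rows are linearly independent in $\R^n$, they span $\R^n$ and $\mc{N}\bsm A \\ B_{\Gamma} \esm = \mc{N}(A) \cap \mc{N}(B_{\Gamma}) = \{0\}$. Since every $w^{i} \in \mc{W}_{\Lambda_{i}}$ satisfies $B_{r} w^{i} = 0$ for $r \in \Gamma$, we have $\mc{W}_{\Lambda_{1}} + \mc{W}_{\Lambda_{2}} \subseteq \mc{N}(B_{\Gamma})$, whence $\mc{N}(A) \cap (\mc{W}_{\Lambda_{1}} + \mc{W}_{\Lambda_{2}}) \subseteq \mc{N}(A) \cap \mc{N}(B_{\Gamma}) = \{0\}$, as required.

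The delicate point is the sufficiency direction: the naive estimate $\dim(\mc{W}_{\Lambda_{1}} + \mc{W}_{\Lambda_{2}}) \le 2\kappa_{B}(\ell) \le m = \rank A$ does \emph{not} by itself force a trivial intersection with the $(n-m)$-dimensional space $\mc{N}(A)$, so one has to exploit that each summand already lies in $\mc{N}(B_{\Gamma})$ together with linear independence of the rows of $\bsm A \\ B_{\Gamma} \esm$ — the cardinality bound $|\Gamma| \ge n - m$ being exactly what makes the combined row system have full column rank. I also note that the necessary and the sufficient thresholds here differ by the usual factor of two, reflecting the gap between $\tilde\kappa_{B}$ and $\kappa_{B}$ in union-of-subspaces recovery, and that closing this gap in general is not to be expected.
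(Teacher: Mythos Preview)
Your argument is correct. The paper itself does not prove Proposition~\ref{prop:unique-cs-unknown} but cites it from \cite{Nam2013,Lu2008}, so there is no in-paper proof to compare against.

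Your necessity direction and the opening equivalence match the standard union-of-subspaces reasoning of \cite{Lu2008}. For sufficiency your route differs from the usual one: the typical argument bounds $\dim(\mc{W}_{\Lambda_1}+\mc{W}_{\Lambda_2}) \le 2\kappa_B(\ell) \le m$ and then invokes a genericity hypothesis on $A$ to ensure injectivity on every such subspace --- as you rightly observe, row-independence of $A$ alone does not give this. You instead exploit the full joint hypothesis on $\bsm A \\ B \esm$ by passing to $\Gamma = \Lambda_1 \cap \Lambda_2$, deriving $|\Gamma| \ge n-m$, and concluding that $\bsm A \\ B_\Gamma \esm$ has full column rank; since $\mc{W}_{\Lambda_1}+\mc{W}_{\Lambda_2} \subseteq \mc{W}_{\Gamma}$, this suffices. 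That is a clean, self-contained way to make the stated hypothesis do the work. One incidental observation: the same hypothesis also gives the sharper bound $p \le n-m$, and combining $\ell \ge n-m/2$ (from $\kappa_B(\ell)=n-\ell\le m/2$) with $\ell \le p \le n-m$ shows the sufficient condition is actually vacuous for $m \ge 1$ under this literal reading of the assumption --- a peculiarity of how the proposition is transcribed here (cf.\ Remark~\ref{rem:dependency-A-B}) rather than a defect in your proof.
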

Roughly speaking, lack of knowledge of $\Lambda$ implies the need of twice the number of measurements for unique recovery.
\begin{remark}\label{rem:dependency-A-B}
Both propositions assume the rows of $A$ and $B$ are independent. This is neither the case for typical sensor matrices $A$ used in discrete tomography nor in the specific case $B = \nabla$ considered next.

Our experimental results will show, however, that the estimates of $\kappa_{B}(\ell) = \kappa_{\nabla}(\ell)$ derived in Section \ref{sec:co-TV} correctly display the relationship between the basic parameters involved, up to some scale factor discussed in Section \ref{sec:Experiments}.
\end{remark}

\subsection{Application to the Analysis Operator $\nabla$}
\label{sec:co-TV}

In order to apply the results of Section \ref{sec:co-sparsity-uniqueness}, the function \eqref{eq:def-kappa_B} has to be evaluated, or estimated, in the case $B=\nabla$. 

For a given cosupport $\Lambda \subset E$, define the set of vertices covered by $\Lambda$,
\begin{equation}
V(\Lambda) = \{ v \in V \colon v \in e 
\;\text{for some}\; e \in \Lambda \},
\end{equation}
and denote the number of connected components of $V(\Lambda)$ by $|V(\Lambda)|_{\sim}$. Due to definition \eqref{eq:def-nabla} of the analysis operator $\nabla$ and \eqref{eq:def-cosupport}, 
each component $(\nabla_{\Lambda} u)_{i}$ corresponds to an edge $e = (v_{1},v_{2})$ with $u(v_{1})=u(v_{2})$. Therefore, following the reasoning in \cite{Nam2013}, $u \in \mc{W}_{\Lambda} = \mc{N}(\nabla_{\Lambda})$ if and only if $u$ is constant on each connected component of $V(\Lambda)$. Hence $\dim\mc{W}_{\Lambda}$ equals the size of the remaining vertices $|V \setminus V(\Lambda)|$ plus the degree of freedom for each connected component,
\begin{equation}
\dim\mc{W}_{\Lambda} = |V| - |V(\Lambda)| + |V(\Lambda)|_{\sim}.
\end{equation}
Now, in view of \eqref{eq:def-kappa_B}, consider some $\Lambda$ with $|\Lambda| = \ell$ and the problem
\begin{equation} \label{eq:kappa-subproblem}
\max_{\Lambda \colon |\Lambda|=\ell} \dim\mc{W}_{\Lambda} 
= |V| - \min_{\Lambda \colon |\Lambda|=\ell} 
(|V(\Lambda)| - |V(\Lambda)|_{\sim}).
\end{equation}
Clearly, the minimal value of the last term is $|V(\Lambda)|_{\sim} = 1$. It will turn out below that this value is attained for extremal sets $\Lambda$ and that the maximum in \eqref{eq:def-kappa_B} is achieved for $|\Lambda| = \ell$.

We therefore temporarily ignore the last term and focus on the second term. The problem is to \emph{minimize} over all subsets $\Lambda \subset E$ of cardinality $|\Lambda|=\ell$ the number $|V(\Lambda)|$ of vertices covered by $\Lambda$. We establish this relationship by considering instead the problem of \emph{maximizing} the set $\Lambda$ over all sets $S := V(\Lambda) \subseteq V$ of fixed cardinality $s = |S(\Lambda)|$. This problem was studied in \cite{Bollobas1991} for regular grid graphs $G = (V,E)$ with vertex set $V = [q]_{0}^{d}$ with equal dimension along each coordinate, in terms of the problem
\begin{equation}
\max_{S \colon |S|=s} |\mrm{Int}_{e}(S)|,
\end{equation}
where $\mrm{Int}_{e}(S)$ denotes the \emph{edge interior} of a set $S \subset V(G)$,
\begin{equation}
\mrm{Int}_{e}(S) := \{(v_{1},v_{2}) \in E \colon v_{1},v_{2} \in S\},
\end{equation}
which equals $\mrm{Int}_{e}(S) = \Lambda$ for our definition $S = V(\Lambda)$.
\begin{figure}[t]
\begin{center}
\includegraphics[width=0.6\textwidth]{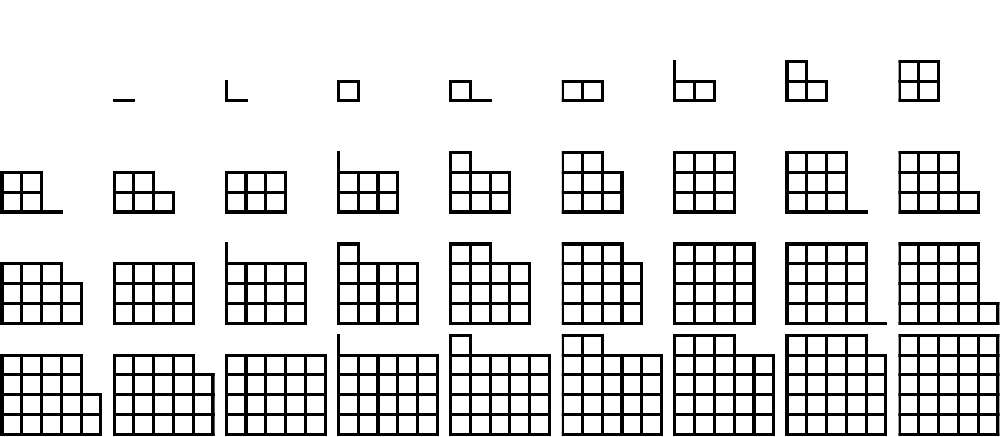}
\caption{From left to right, top to bottom: 
Edge sets $\Lambda$ corresponding to the subsets $S = V(\Lambda) \subseteq V = [q]_{0}^{d},\, q=5,\,d=2$, of cube-ordered vertices of cardinalities $s = |S|=1,2,\dotsc,|V|$. According to Thm.~\ref{thm:maximal-sets}, these sets belong to the maximizers of $|\Lambda|$ among all subsets $V(\Lambda) \subseteq V$ with fixed $s = |V(\Lambda)|$.
}
\label{fig:CubeOrder}
\end{center}
\end{figure}
\begin{theorem}[{\cite[Thm.~13]{Bollobas1991}}]
Let $S$ be a subset of $[q]_{0}^{d}$, with $s = |S|$, $q \geq 3,\, d \geq 2$. Then
\begin{equation} \label{eq:edgeinterior-bound-Bollobas}
|\mrm{Int}_{e}(S)| \leq \max\Big\{
d s (1-s^{-1/d}), d q^{d} (1-1/q) \big(1-(1-s/q^{d})^{1-1/d}\big)
\Big\}.
\end{equation}
\end{theorem}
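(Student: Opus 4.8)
The statement to be proved is Bollob\'as--Leader's Theorem 13, a sharp isoperimetric inequality for the edge interior of sets in the discrete cube $[q]_0^d$. My plan is to follow the standard compression-and-induction strategy that underlies the Bollob\'as--Leader circle of results, which reduces the general case to the analysis of a single canonical family of extremal sets.

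\textbf{Step 1: Pass to a canonical extremal configuration.} First I would invoke a compression argument: for each coordinate direction, replace $S$ by its ``down-compression'' in that direction (pushing each fibre toward the origin). One checks that compression never decreases $|\mathrm{Int}_e(S)|$ while preserving $|S|$, so it suffices to bound the edge interior for \emph{compressed} sets. A further interchange/stability argument shows that among compressed sets of given cardinality the maximum of $|\mathrm{Int}_e(S)|$ is attained by an initial segment of a suitable order on $[q]_0^d$ --- the ``cube order'' illustrated in Figure~\ref{fig:CubeOrder}, which fills up $d$-dimensional subcubes greedily. This is the classical reduction; the work is in verifying that each compression step is monotone for the edge-interior functional, which is a routine but slightly tedious counting of edges gained versus lost within each two-dimensional section.

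\textbf{Step 2: Estimate the edge interior of initial segments.} Having reduced to initial segments $S$ of the cube order with $|S| = s$, I would compute (or bound from above) $|\mathrm{Int}_e(S)|$ directly. Writing $s$ in ``mixed-radix'' form relative to the nested subcubes, the edge interior of such a segment decomposes into the edges inside a full subcube of side $t := \lceil s^{1/d}\rceil$ (contributing roughly $d\,t^{d-1}(t-1) \approx d\,s(1-s^{-1/d})$ edges) plus the edges in the partially filled outer layer. The two terms in the max on the right-hand side of \eqref{eq:edgeinterior-bound-Bollobas} correspond exactly to the two regimes: when $s$ is small the dominant term is the ``interior of a subcube'' estimate $d s(1-s^{-1/d})$, and when $s$ is close to $q^d$ it is more efficient to measure the complement, giving the term $d q^d(1-1/q)\big(1-(1-s/q^d)^{1-1/d}\big)$ via the identity $|\mathrm{Int}_e(S)| = |E| - |\partial S| - |\mathrm{Int}_e(S^c)|$ together with the same bound applied to $S^c$. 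Replacing the integer quantities $\lceil s^{1/d}\rceil$ etc.\ by their real-valued relaxations and using concavity of $x \mapsto x^{1-1/d}$ yields the clean closed form stated.

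\textbf{Main obstacle.} The hard part is \textbf{Step 1} --- specifically, proving that the cube-order initial segments really are the extremizers. Edge-isoperimetry in $[q]_0^d$ does not have a single ``nested'' family of optimal sets valid for all $s$ simultaneously (unlike, say, the Hamming cube), so one cannot just quote a clean nestedness theorem; the compression must be combined with a careful case analysis showing the functional cannot increase, and then a separate argument that the resulting highly-compressed set is dominated by a cube-order segment of the same size. Rather than reproduce this in detail, I would cite \cite[Thm.~13]{Bollobas1991} for the extremal characterization and carry out only Step 2, the explicit evaluation, to obtain the stated bound --- which is in any case all that is needed for the application to $\kappa_\nabla(\ell)$ in the sequel.
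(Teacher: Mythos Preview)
The paper does not prove this theorem at all: it is quoted verbatim from \cite[Thm.~13]{Bollobas1991} and used as a black box to derive the bounds \eqref{eq:kappa-ub}. There is therefore no ``paper's own proof'' to compare against; your proposal to sketch the compression-plus-induction strategy and then cite \cite{Bollobas1991} for the hard extremal step is already more than the paper does, and is perfectly adequate for the purposes of Section~\ref{sec:co-TV}.

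One caution about your sketch on its own terms: you claim that down-compression in a coordinate direction ``never decreases $|\mathrm{Int}_e(S)|$''. In the grid $[q]_0^d$ vertices do not all have the same degree, so maximizing the edge interior is \emph{not} equivalent to minimizing the edge boundary for sets of fixed cardinality; the usual compression argument controls $|\partial_e S|$, and translating this to a statement about $|\mathrm{Int}_e(S)|$ requires an additional accounting of the change in $\sum_{v\in S}\deg(v)$ under compression. Bollob\'as and Leader handle this, but it is not quite as routine as your Step~1 suggests. Since you ultimately defer to \cite{Bollobas1991} for that step anyway, this does not affect the validity of your proposal.
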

Some sets $S = V(\Lambda) \subseteq V$ corresponding to maximal sets $\Lambda = \mrm{Int}_{e}(S)$ are also determined in \cite{Bollobas1991}. The following corresponding assertion is based on the \emph{cube order} or vertices $v \in V = [q]_{0}^{d}$ (identified with grid vectors $v = (v_{1},\dotsc,v_{d})^{\T}$, cf.~Section 
\ref{sec:Notation}): $v \prec v' \;\gdw\; w(v) < w(v')$, where $w(v) = \sum_{i \in [d]} 2^{i + d v_{i}}$. See Figure \ref{fig:CubeOrder} for an illustration.
\begin{theorem}[{\cite[Thm.~15]{Bollobas1991}}]\label{thm:maximal-sets}
Let $S' \subset V = [q]^{d}$, and let $S$ be the set of the first $s = |S'|$ vertices in the cube order on $V$. Then $|\mrm{Int}_{e}(S')| \leq |\mrm{Int}_{e}(S)|$.
\end{theorem}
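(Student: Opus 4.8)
The plan is to prove this by a compression/shifting argument in the cube order, the standard technique for isoperimetric inequalities on grids. First I would observe that it suffices to show: given any finite set $S' \subset V = [q]^{d}$ that is \emph{not} an initial segment of the cube order, one can produce a set $S''$ with $|S''| = |S'|$ and $|\mrm{Int}_{e}(S'')| \geq |\mrm{Int}_{e}(S')|$ that is ``closer'' to an initial segment with respect to some well-founded measure; iterating terminates at the initial segment $S$, proving $|\mrm{Int}_{e}(S')| \leq |\mrm{Int}_{e}(S)|$. The natural potential to drive the induction is $\sum_{v \in S'} w(v)$ with $w(v) = \sum_{i\in[d]} 2^{i+dv_{i}}$: if $S'$ is not an initial segment there exist $v \notin S'$ and $v' \in S'$ with $v \prec v'$, and swapping $v'$ out for $v$ strictly decreases the potential while we must argue it does not decrease the edge interior.

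The key step is the local exchange: replacing $v'$ by $v$ where $v \prec v'$ and $w(v)$ is minimal among vertices outside $S'$ while $w(v')$ is maximal among vertices inside $S'$. The gain/loss in $|\mrm{Int}_{e}(\cdot)|$ is (number of neighbours of $v$ already in $S'\setminus\{v'\}$) minus (number of neighbours of $v'$ in $S'\setminus\{v'\}$). One shows this is $\geq 0$ by exploiting the definition of the cube order: the weight $w$ puts the coordinates in lexicographic-type priority, so that a cube-order-initial set is ``downward closed'' in each coordinate direction after suitable ordering, and the minimal outside/maximal inside vertices $v,v'$ differ only in a way that makes $v$'s neighbourhood dominate $v'$'s. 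The cleanest route is probably to first reduce to the case where $S'$ is \emph{compressed} in every coordinate direction (i.e.\ in each line parallel to a coordinate axis, the set restricted to that line is an initial segment of that line) via one-dimensional compressions, each of which is classical and does not decrease the edge interior by a simple counting on each line plus a monotonicity statement across parallel lines; then show that a set compressed in all directions but not equal to the cube-order-initial segment still admits a profitable exchange of the above type.

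The main obstacle I expect is precisely showing that coordinate-wise compression, or the single-vertex exchange, does not decrease $|\mrm{Int}_{e}(S')|$: unlike the vertex-boundary isoperimetric problem, the edge interior couples a vertex to all of its neighbours simultaneously, so one must carefully track edges in the ``moving'' direction versus edges in transverse directions. For compression in direction $i$, edges along direction $i$ can only increase (an initial segment on a line maximizes internal edges for that line), and edges in every transverse direction $j \neq i$ are preserved in number because compression in direction $i$ acts as a coordinate-wise monotone rearrangement that respects the bipartite adjacency structure between consecutive ``slabs'' $x_{i} = t$ and $x_{i} = t+1$ — this last preservation is the delicate combinatorial lemma (a ``compression preserves transverse edges'' statement, analogous to the one used for Harper-type theorems). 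Once both the reduction to fully-compressed sets and the final exchange step are in place, well-foundedness of the weight potential $\sum_{v\in S'} w(v)$ closes the induction; I would cite \cite{Bollobas1991} for the detailed verification of the transverse-edge lemma rather than reproduce it, since the statement we need is exactly their Theorem~15 and we use it only as a black box downstream.
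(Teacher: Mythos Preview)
The paper does not prove this theorem; it is quoted from \cite{Bollobas1991} (their Theorem~15) and used purely as a black box. There is no proof in the paper to compare against. Your own closing remark---that you would cite \cite{Bollobas1991} and use the statement as a black box downstream---is exactly what the paper does, so the proof sketch preceding it is extraneous here.

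For the record, your sketch is in the right spirit (coordinate compressions followed by a finishing argument), but contains one inaccuracy: transverse edges are not ``preserved in number'' under compression in direction $i$; they can strictly increase. The correct and sufficient statement is that they do not \emph{decrease}: two adjacent direction-$i$ lines carrying $a$ and $b$ points of $S'$ share at most $\min(a,b)$ transverse edges before compression and exactly $\min(a,b)$ after. The passage from a fully coordinate-compressed set to the cube-order initial segment in \cite{Bollobas1991} is also not handled by the single-vertex swap you outline but by a further compression-type step; that, rather than the one-dimensional compressions, is where the substantive work in their proof lies.
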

Thm.~\ref{thm:maximal-sets} says (cf.~Fig.~\ref{fig:CubeOrder}) that singly connected mimimal sets $V(\Lambda)$ in \eqref{eq:kappa-subproblem} are achieved, that is $|V(\Lambda)|_{\sim} = 1$. Furthermore, these sets $\{\Lambda\}_{|\Lambda| \geq \ell}$ are nested. Hence the maximum in \eqref{eq:def-kappa_B} is achieved for $|\Lambda|=\ell$.

A closer inspection of the two terms defining the upper bound \eqref{eq:edgeinterior-bound-Bollobas} shows that the first term of the r.h.s.~is larger if $s \geq d^d=4$ in the 2D case $d=2$, respectively, if $s \geq d^d=27$ in the 3D case $d=3$. The corresponding values of the bound are $|\mrm{Int}_{e}(S)| \leq 4$ and $|\mrm{Int}_{e}(S)| \leq 54$, respectively. As a consequence, we consider the practically relevant first term. Setting $\ell = |\mrm{Int}_{e}(S)| = |V(\Lambda)|$ and solving the equality for $s$ (due to Thm.~\ref{thm:maximal-sets}) yields
\begin{subequations} \label{eq:s-kappa-bounds}
\begin{align}
s &= \frac{1}{2}(1+\ell+\sqrt{1+2 \ell}) & & (d=2) \\
s &= \frac{1}{3} \Big( 2^{1/3} \frac{1+2 \ell}{t(\ell)} + 
\big(1+\ell+ \frac{1}{2^{1/3}} t(\ell)\big) \Big) & & (d=3) \\
&\qquad
t(\ell) = \Big(2 + 6 \ell + 3 \ell^{2} + \sqrt{(4 + 9 \ell) \ell^{3}}
\Big)^{1/3} \\
&\geq \frac{1}{3} \Big(1 + \ell + (3 \ell^{2})^{1/3}
+ 2 (\ell/3)^{1/3} \Big) + O(\ell^{-1/3}).
\end{align}
\end{subequations}
Inserting $s$, or simpler terms lower bounding $s$, for $|V(\Lambda)|$ in \eqref{eq:kappa-subproblem} and putting all conclusions together, yields for \eqref{eq:def-kappa_B} and $B = \nabla$:
\begin{lemma}
Let $G=(V,E)$ be a regular grid graph with $V = [q]_{0}^{d}$, $n = |V| = q^{3}$. Then
\begin{subequations} \label{eq:kappa-ub}
\begin{align}
\forall \ell > 4,\qquad
\kappa_{\nabla}(\ell) &\leq n - \frac{1}{2}(\ell + \sqrt{1 + 2 \ell}) + \frac{1}{2}, & & (d=2), 
\label{eq:kappa-ub2D} \\ \label{eq:kappa-ub3D}
\forall \ell > 54,\qquad
\kappa_{\nabla}(\ell) &\leq n 
- \frac{1}{3} \Big(\ell + \sqrt[3]{3 \ell^{2}} 
+ 2 \sqrt[3]{\frac{\ell}{3}} \Big) + \frac{2}{3}, & & (d=3).
\end{align}
\end{subequations}
\end{lemma}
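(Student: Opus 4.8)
The plan is to assemble the lemma from the three ingredients already laid out in the excerpt. First I would unwind the definition $\kappa_\nabla(\ell) = \max_{|\Lambda|\ge\ell}\dim\mc W_\Lambda$ using the identity $\dim\mc W_\Lambda = |V| - |V(\Lambda)| + |V(\Lambda)|_\sim$ established just above. By Theorem \ref{thm:maximal-sets}, among all vertex sets $S = V(\Lambda)$ of fixed cardinality $s$, the edge interior $|\mrm{Int}_e(S)| = |\Lambda|$ is maximized by the first $s$ vertices in the cube order, and these extremal sets are connected and nested. Connectedness gives $|V(\Lambda)|_\sim = 1$ for the maximizers, and nestedness means enlarging $\ell$ only shrinks the feasible family, so the maximum over $|\Lambda|\ge\ell$ is attained at $|\Lambda|=\ell$. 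Hence $\kappa_\nabla(\ell) = n - \min_{|\Lambda|=\ell}|V(\Lambda)| + 1$, and the task reduces to lower-bounding the number of vertices $|V(\Lambda)| = s$ needed to realize an edge interior of size $\ell$.

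Next I would invoke Theorem~\cite[Thm.~13]{Bollobas1991}: for $s$ in the relevant range (which the excerpt already checks: $s\ge d^d$, i.e.\ $s\ge 4$ when $d=2$ and $s\ge 27$ when $d=3$, corresponding to $\ell>4$ and $\ell>54$ respectively, once one verifies the second term of the max in \eqref{eq:edgeinterior-bound-Bollobas} is dominated), the first branch of the bound applies, giving $\ell = |\mrm{Int}_e(S)| \le d\,s(1 - s^{-1/d})$. Since the extremal (cube-ordered) sets attain this branch with equality, solving $\ell = d\,s(1-s^{-1/d})$ for $s$ yields exactly the closed forms in \eqref{eq:s-kappa-bounds}: a quadratic in $\sqrt{s}$ for $d=2$ giving $s = \tfrac12(1+\ell+\sqrt{1+2\ell})$, and a cubic in $s^{1/3}$ for $d=3$ solved by Cardano, giving the $t(\ell)$ expression, together with its stated asymptotic lower bound $s \ge \tfrac13\big(1+\ell+(3\ell^2)^{1/3}+2(\ell/3)^{1/3}\big)$. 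Substituting these into $\kappa_\nabla(\ell) = n - s + 1$ produces \eqref{eq:kappa-ub2D} and \eqref{eq:kappa-ub3D}; for $d=3$ one uses the asymptotic lower bound on $s$ (which is legitimate because it only increases the subtracted quantity's lower bound, hence decreases the upper bound on $\kappa_\nabla$, keeping the inequality valid) and drops the $O(\ell^{-1/3})$ term into the constant.

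The main obstacle is the $d=3$ algebra: verifying that solving the cubic $\ell = 3s(1 - s^{-1/3})$ — equivalently, with $x = s^{1/3}$, the cubic $3x^3 - \ell x - \ell\cdot? $ — actually produces the radical $t(\ell) = \big(2 + 6\ell + 3\ell^2 + \sqrt{(4+9\ell)\ell^3}\big)^{1/3}$ and that the clean lower bound in \eqref{eq:s-kappa-bounds}(d) genuinely lower-bounds that radical form for all $\ell>54$. I would handle this by checking the cubic's coefficients match, confirming the discriminant sign so that the real-root formula is the relevant one, and then verifying the asymptotic bound by a direct expansion of $t(\ell)$ in powers of $\ell^{-1/3}$ and a monotonicity/remainder estimate showing the $O(\ell^{-1/3})$ error has a definite sign (or can be absorbed) on the stated range. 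A secondary, smaller point to nail down is the claim that the extremal sets realize the \emph{first} branch of the Bollob\'as max with equality and are connected — this is exactly what Theorem~\ref{thm:maximal-sets} and the cube-order construction in Figure~\ref{fig:CubeOrder} furnish, so it is a citation rather than a computation, but it must be stated explicitly to justify replacing the ``$\le$'' in \eqref{eq:edgeinterior-bound-Bollobas} by an equality when inverting for $s$.
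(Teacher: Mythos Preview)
Your proposal follows the paper's argument essentially verbatim: the identity for $\dim\mc W_\Lambda$, the appeal to Theorem~\ref{thm:maximal-sets} for connectedness and nestedness of the extremal sets, the Bollob\'as--Leader bound \eqref{eq:edgeinterior-bound-Bollobas}, and the inversion for $s$ via \eqref{eq:s-kappa-bounds} are exactly the steps in the text preceding the lemma. One unnecessary worry in your final paragraph: you do not need the extremal sets to attain \eqref{eq:edgeinterior-bound-Bollobas} with equality---since any $\Lambda$ with $|\Lambda|=\ell$ satisfies $\ell\le|\mrm{Int}_e(V(\Lambda))|\le f(s):=ds(1-s^{-1/d})$ and $f$ is increasing, the lower bound $s\ge f^{-1}(\ell)$ follows directly, and that is all the upper bound on $\kappa_\nabla(\ell)$ requires.
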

Figure \ref{fig:kappa-nabla} illustrates these bounds.
\begin{figure}
\centerline{
\includegraphics[width=0.45\textwidth]{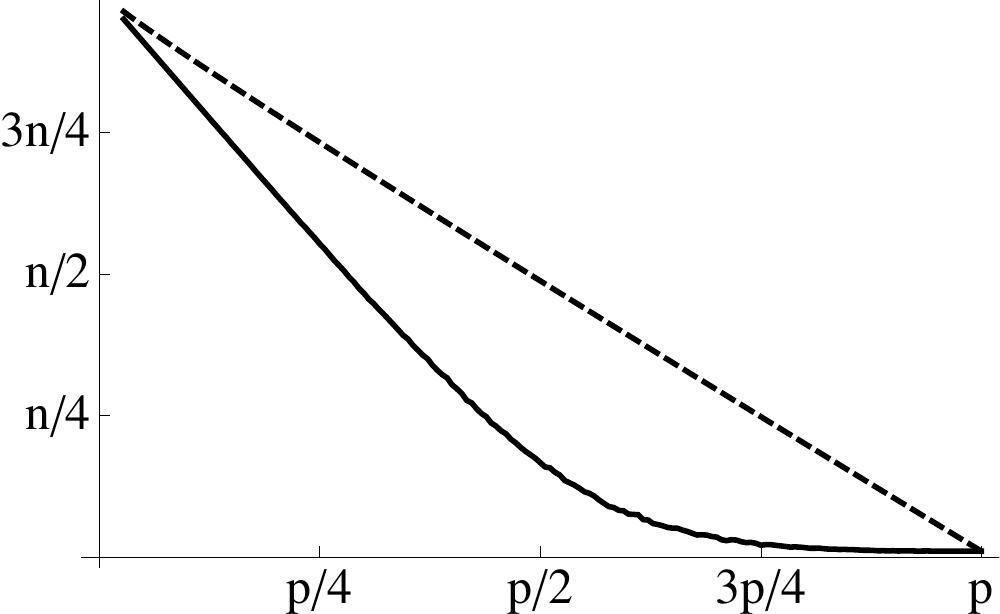}
\hspace{0.02\textwidth}
\includegraphics[width=0.45\textwidth]{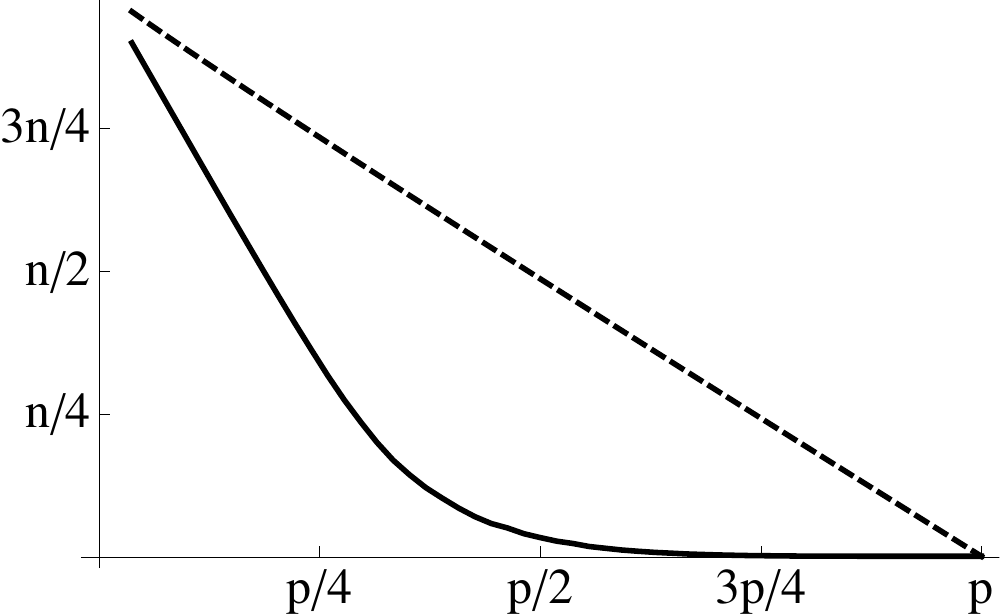}
}
\caption{The bounds \eqref{eq:kappa-ub2D} and \eqref{eq:kappa-ub3D} shown in the left and right panel respectively, as dashed lines, for $d=10$, as a function of $\ell$. The solid lines show empirical expected values of $\kappa_{\nabla}(\ell)$ computed by averaging over $100$ analysis matrices $B_{\Lambda}$ for each value $\ell=|\Lambda|$. The gap to the upper bound simply shows that the extremal sets discussed in connection with Theorem \ref{thm:maximal-sets} are not observed in random experiments. For example, it is quite unlikely that random cosupports $\Lambda$ are singly connected.
}
\label{fig:kappa-nabla}
\end{figure}

\begin{remark}
Up to the constant $1$ under the square root, the bound \eqref{eq:kappa-ub2D} for the 2D case equals the bound derived in a different way in \cite{Nam2013}. We provided the bounds \eqref{eq:kappa-ub} based on general results in \cite{Bollobas1991} that apply to grid graphs in any dimension $d \geq 2$.
\end{remark}
We conclude this section by applying Propositions \ref{prop:unique-cs-known} and \ref{prop:unique-cs-unknown}.
\begin{corollary}\label{cosparsity_curves}
Under the assumptions of Propositions \ref{prop:unique-cs-known} and \ref{prop:unique-cs-unknown}, a $\ell$-cosparse solution $u$ to the measurement equations $A u = b$ will be unique if the number of measurements satisfies
\begin{subequations}\label{eq:m-lb-known}
\begin{align}
m &\geq n - \frac{1}{2} \Big(\ell + \sqrt{2 \ell + 1} - 1 \Big)
& & (d=2), 
\label{eq:m-lb-known-2D} \\ \label{eq:m-lb-known-3D}
m &\geq n - \frac{1}{3}\Big(\ell + \sqrt[3]{3 \ell^2} 
+ 2 \sqrt[3]{\frac{\ell}{3}} - 2 \Big)
& & (d=3)
\end{align}
\end{subequations}
in case the cosupport $\Lambda$ is known, and 
\begin{subequations}\label{eq:m-lb-unknown}
\begin{align}
m &\geq 2 n - (\ell + \sqrt{2 \ell + 1} - 1)
& & (d=2), 
\label{eq:m-lb-unknown-2D} \\ \label{eq:m-lb-unknown-3D}
m &\geq 2 n - \frac{2}{3}\Big(\ell + \sqrt[3]{3 \ell^2} 
+ 2 \sqrt[3]{\frac{\ell}{3}} - 2 \Big)
& & (d=3)
\end{align}
\end{subequations}
in case the cosupport $\Lambda$ is unknown.
\end{corollary}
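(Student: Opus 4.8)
The plan is to combine the uniqueness criteria of Propositions \ref{prop:unique-cs-known} and \ref{prop:unique-cs-unknown} with the explicit upper bounds on $\kappa_{\nabla}(\ell)$ furnished by the preceding Lemma, eqns.~\eqref{eq:kappa-ub2D}--\eqref{eq:kappa-ub3D}. Proposition \ref{prop:unique-cs-known} guarantees recovery of every $\ell$-cosparse vector from $b = Au$ once $\kappa_{\nabla}(\ell) \le m$, while Proposition \ref{prop:unique-cs-unknown} guarantees uniqueness of an $\ell$-cosparse solution of $Au=b$ once $\kappa_{\nabla}(\ell) \le m/2$. Since the Lemma bounds $\kappa_{\nabla}(\ell)$ from above, it suffices to verify that the lower bound imposed on $m$ in \eqref{eq:m-lb-known} (resp.~\eqref{eq:m-lb-unknown}) already dominates that upper bound; transitivity of $\le$ then delivers the required inequality $\kappa_{\nabla}(\ell) \le m$ (resp.~$\le m/2$).

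Carrying this out in the 2D case: the bound \eqref{eq:kappa-ub2D} reads $\kappa_{\nabla}(\ell) \le n - \frac{1}{2}\big(\ell + \sqrt{1+2\ell}\big) + \frac{1}{2} = n - \frac{1}{2}\big(\ell + \sqrt{2\ell+1} - 1\big)$ for $\ell > 4$, and the right-hand side is exactly that of \eqref{eq:m-lb-known-2D}; hence $m \ge n - \frac{1}{2}\big(\ell + \sqrt{2\ell+1} - 1\big) \ge \kappa_{\nabla}(\ell)$ and Proposition \ref{prop:unique-cs-known} applies. Doubling the same bound gives $2\kappa_{\nabla}(\ell) \le 2n - \big(\ell + \sqrt{2\ell+1} - 1\big)$, the right-hand side of \eqref{eq:m-lb-unknown-2D}, so $m \ge 2n - \big(\ell + \sqrt{2\ell+1} - 1\big)$ yields $\kappa_{\nabla}(\ell) \le m/2$ and Proposition \ref{prop:unique-cs-unknown} applies. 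The 3D assertions \eqref{eq:m-lb-known-3D} and \eqref{eq:m-lb-unknown-3D} follow by the identical two-line argument, now invoking \eqref{eq:kappa-ub3D} (valid for $\ell > 54$) and using the elementary rearrangement $n - \frac{1}{3}\big(\ell + \sqrt[3]{3\ell^2} + 2\sqrt[3]{\ell/3}\big) + \frac{2}{3} = n - \frac{1}{3}\big(\ell + \sqrt[3]{3\ell^2} + 2\sqrt[3]{\ell/3} - 2\big)$, and its doubling, to match the displayed formulas.

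There is no real analytic difficulty left at this stage: everything hard was absorbed into the Lemma, which rested on the Bollob\'as isoperimetric bound (Theorem \ref{thm:maximal-sets}) and on the observation following \eqref{eq:kappa-subproblem} that extremal cosupports are singly connected, so that the maximum in \eqref{eq:def-kappa_B} is attained at $|\Lambda| = \ell$. The points worth stating explicitly are, first, that Propositions \ref{prop:unique-cs-known} and \ref{prop:unique-cs-unknown} presuppose linear independence of the rows of $\bsm A \\ B \esm$ --- which, as noted in Remark \ref{rem:dependency-A-B}, genuinely fails for tomographic $A$ together with $B = \nabla$, which is why the corollary is phrased ``under the assumptions of'' those propositions, the practical gap being absorbed into the empirical scale factor of Section \ref{sec:Experiments} --- and, second, that the inequalities in \eqref{eq:m-lb-known} and \eqref{eq:m-lb-unknown} carry the implicit hypotheses $\ell > 4$ ($d=2$) and $\ell > 54$ ($d=3$) inherited from the Lemma, outside of which the cruder second term of the Bollob\'as bound \eqref{eq:edgeinterior-bound-Bollobas} would have to be used instead.
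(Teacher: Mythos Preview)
Your proposal is correct and is precisely the argument the paper intends: the corollary is stated immediately after the sentence ``We conclude this section by applying Propositions \ref{prop:unique-cs-known} and \ref{prop:unique-cs-unknown},'' with no further proof, so the whole content is exactly the substitution of the Lemma's bounds \eqref{eq:kappa-ub2D}--\eqref{eq:kappa-ub3D} into the conditions $\kappa_{\nabla}(\ell)\le m$ and $\kappa_{\nabla}(\ell)\le m/2$. Your explicit bookkeeping of the algebraic rearrangements and of the inherited hypotheses ($\ell>4$, $\ell>54$, row-independence per Remark \ref{rem:dependency-A-B}) matches and slightly expands on what the paper leaves implicit.
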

The above derived bounds on the required image cosparsity 
guarantees uniqueness in case of known or unknown cosupport, and imply that
recovery can be carried out via
\begin{equation}\label{eq:SNTV_known}
\min_{u} \|B u\|_{0} \quad\text{s.t.}\quad A u =b, B_\Lambda u=0,
\end{equation}
when the cosupport is known, or via
\begin{equation}\label{eq:SNTV_unknown}
\min_{u} \|B u\|_{0} \quad\text{s.t.}\quad A u =b,
\end{equation}
when the cosupport is unknown.
In Section \ref{sec:Experiments}, we compare these relationships to numerical results
involving convex relaxations of \eqref{eq:SNTV_known} and \eqref{eq:SNTV_known}, studied in Section \ref{sec:minTV}.

\section{Recovery by Linear Programming}\label{sec:minTV}

In this section, uniqueness of the optimum $\ol{u}$ solving problem \eqref{eq:TVmin_pos} is studied. The resulting condition is necessary and sufficient for unique recovery $\ol{u} = u^{\ast}$ of any $\ell$-cosparse vector $u^{\ast}$ that satisfies $A u^{\ast} = b$ and has cosupport $\Lambda,\, |\Lambda|=\ell$, with respect to the analysis operator $B=\nabla$.

We turn problem \eqref{eq:TVmin_pos} into a standard linear programming formulation. 
Defining
\begin{equation} \label{eq:def-Mq}
M := \bpm B & -I & I \\ A & 0 & 0 \epm,\quad
q := \bpm 0 \\ b \epm
\end{equation}
and the polyhedral set
\begin{equation}
\mc{P} := \{ w \in \R^{n+2 p} \colon M w = q,\; w \geq 0 \},\quad
w := \bpm u \\ v \epm = \bpm u \\ v^{1} \\ v^{2} \epm,
\end{equation}
problem \eqref{eq:TVmin_pos} equals the linear program (LP)
\begin{equation} \label{eq:primal-LP}
\min_{w \in \mc{P}} \la c, w \ra 
= \min_{(u, v^{1}, v^{2}) \in \mc{P}} \la \eins, v^{1} + v^{2} \ra,
\quad
c = \bpm 0 \\ \eins \\ \eins \epm.
\end{equation}
Let $\ol{w}=(\ol{u},\ol{v}) = (\ol{u},\ol{v}^{1},\ol{v}^{2})$ solve \eqref{eq:primal-LP}. We assume throughout 
\begin{equation} \label{eq:u-nonzero}
\ol{u}_{i} > 0,\; i \in [n]
\end{equation}
which is not restrictive with respect to applications ($u$ may e.g.~represent strictly positive material densities).
Based on $\ol{w}$, we define the corresponding index sets
\begin{equation} \label{eq:def-Jv}
J := \{ i \in [\dim(w)] \colon \ol{v}_{i}=0\},\quad
\ol{J} := \{ i \in [\dim(v)] \colon \ol{v}_{i}=0\},\qquad
w_{J} = v_{\ol{J}},\; \forall w = \bpm u \\ v \epm.
\end{equation}
\begin{theorem}
[{\cite[Thm.~2(iii)]{Mangasarian1979}}]
\label{thm:M-uniqueness}
Let $\ol{w}$ be a solution of the linear program \eqref{eq:primal-LP}. The following statements are equivalent:
\begin{enumerate}[(i)]
\item $\ol{w}$ is unique.
\item There exists no $w$ satisfying
\begin{equation} \label{eq:cond-uniqueness-M79}
M w = 0,\quad
w_{J} \geq 0,\quad
\la c, w \ra \leq 0,\quad
w \neq 0.
\end{equation}
\end{enumerate}
\end{theorem}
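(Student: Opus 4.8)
The plan is to derive this theorem as a direct specialization of the classical LP uniqueness characterization of Mangasarian \cite{Mangasarian1979}, adapting the general statement to the concrete structure of the linear program \eqref{eq:primal-LP}. The key observation is that \eqref{eq:primal-LP} is a standard-form LP $\min\{\la c,w\ra : Mw=q,\ w\ge 0\}$, so the abstract theorem applies once we identify the active-constraint structure at the solution $\ol{w}$. The index set $J$ collects exactly those coordinates of $w$ whose nonnegativity constraint is active at $\ol{w}$ (here, by assumption \eqref{eq:u-nonzero}, the $u$-block is strictly positive, so the active coordinates all lie in the $v$-block, which is why $J$ can be encoded equivalently as $\ol{J}$ acting on $v$ alone). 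With this identification, the two conditions \eqref{eq:cond-uniqueness-M79} say precisely: a nonzero feasible direction $w$ for the homogenized problem that keeps all currently-active constraints active ($Mw=0$ and $w_J\ge 0$) and does not increase the objective ($\la c,w\ra\le 0$) would yield an alternative optimum, contradicting uniqueness — and conversely.

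The steps I would carry out: First, recall Mangasarian's general uniqueness theorem: for $\min\{\la c,w\ra : Mw=q,\ w\ge 0\}$ with solution $\ol{w}$, and $J=\{i : \ol{w}_i=0\}$, uniqueness of $\ol{w}$ is equivalent to the nonexistence of $w\ne 0$ with $Mw=0$, $w_J\ge 0$, $\la c,w\ra\le 0$. Second, verify the hypotheses hold in our setting: $\mc{P}$ is a nonempty polyhedron (assuming feasibility of $Au=b$, $u\ge 0$, which is implicit since $\ol{w}$ exists), $c\ge 0$ so the LP is bounded below, hence a solution $\ol{w}$ exists; this justifies invoking the theorem. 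Third, translate the index set: since $\ol{u}_i>0$ for all $i\in[n]$ by \eqref{eq:u-nonzero}, none of the first $n$ coordinates of $\ol w$ are active, so $J\subseteq\{n+1,\dots,n+2p\}$, which is exactly the content of the definition $w_J=v_{\ol J}$ in \eqref{eq:def-Jv}. Fourth, substitute into the abstract statement to obtain \eqref{eq:cond-uniqueness-M79} verbatim.

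The "(i) $\Rightarrow$ (ii)" direction is the easy one: if such a $w$ existed, then $\ol{w}+t w$ is feasible for small $t>0$ (the equality constraints are preserved, the active constraints stay nonnegative by $w_J\ge 0$, and the inactive ones stay positive for small $t$) with objective $\la c,\ol w\ra + t\la c,w\ra \le \la c,\ol w\ra$, hence also optimal and distinct from $\ol w$ — contradiction. The "(ii) $\Rightarrow$ (i)" direction is where the real work in Mangasarian's argument lies: if $\ol w$ is not unique, pick another optimum $\ol w'$ and set $w = \ol w' - \ol w$; then $Mw=0$, and $\la c,w\ra = 0\le 0$; the subtle point is that $w_J\ge 0$ because $\ol w_i = 0$ and $\ol w'_i\ge 0$ for $i\in J$.

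\textbf{The main obstacle.} Since this theorem is quoted directly from \cite[Thm.~2(iii)]{Mangasarian1979}, there is essentially nothing to prove beyond checking that \eqref{eq:primal-LP} is genuinely in the standard form to which that result applies and that the index-set bookkeeping in \eqref{eq:def-Jv} matches the hypothesis $J=\{i:\ol w_i=0\}$. The only genuinely delicate point is that \eqref{eq:cond-uniqueness-M79} uses $w_J\ge 0$ rather than $w_J>0$ or $w\ge 0$: this is exactly the right relaxation, because a feasible perturbation direction must respect active inequality constraints only in the nonnegative sense (a coordinate $\ol w_i=0$ may be pushed up but not down), while inactive constraints impose no infinitesimal restriction. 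If one wanted a self-contained proof rather than a citation, reproducing Mangasarian's argument — which ultimately rests on an LP duality / complementary-slackness analysis of the face of $\mc P$ containing $\ol w$ — would be the substantive part; but given the excerpt explicitly attributes the statement, I would simply cite it and note the specialization.
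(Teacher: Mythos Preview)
Your proposal is correct and matches the paper's treatment: the paper does not prove this theorem at all but simply quotes it from \cite[Thm.~2(iii)]{Mangasarian1979}, and your verification that \eqref{eq:primal-LP} is in standard form with $J$ matching the active-index set (using \eqref{eq:u-nonzero} to localize $J$ to the $v$-block) is exactly the bookkeeping needed to justify the citation. Your sketched arguments for both implications are sound; note that the contrapositive of (ii)$\Rightarrow$(i) you give is in fact just as elementary as the other direction, so neither side requires the duality machinery you allude to.
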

We turn Theorem \eqref{thm:M-uniqueness} into a \emph{nullspace condition} w.r.t.~the sensor matrix $A$, for the unique solvability of problems \eqref{eq:primal-LP} and \eqref{eq:TVmin_pos}. This condition is stated as Corollary \ref{cor:uniqueness} below, after a preparatory Lemma.

\begin{lemma} \label{lem:olJ}
Let $\ol{w}$ be a solution of the LP \eqref{eq:primal-LP}. Then the cardinality of the index set $\ol{J}$ defined by \eqref{eq:def-Jv} is 
\begin{equation}
|\ol{J}| = 2 \ell + k = p + \ell,\qquad 
|\ol{J}^{c}| = 2 p - |\ol{J}| = k,\qquad
k := p-\ell.
\end{equation}
\end{lemma}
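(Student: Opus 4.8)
The plan is to use that, once $\ol{u}$ is fixed, the linear program \eqref{eq:primal-LP} decouples in the slack variables $v=(v^{1},v^{2})$ into $p$ one-dimensional problems, each of which pins down the corresponding pair $(\ol{v}^{1}_{r},\ol{v}^{2}_{r})$ completely; the assertion then follows by counting the resulting zeros.

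First I would read off from $M\ol{w}=q$, with $M,q$ as in \eqref{eq:def-Mq}, that its $B$-block gives $\ol{v}^{1}_{r}-\ol{v}^{2}_{r}=(B\ol{u})_{r}$ for every $r\in[p]$, while the objective is $\la\eins,v^{1}+v^{2}\ra=\sum_{r\in[p]}(v^{1}_{r}+v^{2}_{r})$, a sum of terms that do not interact across $r$. Hence, for $\ol{w}$ to be optimal, each pair $(\ol{v}^{1}_{r},\ol{v}^{2}_{r})$ must minimize $v^{1}_{r}+v^{2}_{r}$ subject to $v^{1}_{r}-v^{2}_{r}=(B\ol{u})_{r}$ and $v^{1}_{r},v^{2}_{r}\ge 0$: otherwise, replacing $\ol{v}$ coordinatewise by the minimizers would preserve feasibility of $Mw=q$, $w\ge 0$, while strictly lowering $\la c,w\ra$, contradicting optimality. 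This elementary subproblem has a \emph{unique} solution, namely $\ol{v}^{1}_{r}=\max\{(B\ol{u})_{r},0\}$ and $\ol{v}^{2}_{r}=\max\{-(B\ol{u})_{r},0\}$; indeed, if $(B\ol{u})_{r}=0$ then nonnegativity together with $v^{1}_{r}=v^{2}_{r}$ forces both to vanish, and if $(B\ol{u})_{r}\neq 0$ the two linear relations determine the pair and make exactly one of its entries zero. In particular, the slack part of any LP optimum is completely determined by $\ol{u}$.

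It then remains to count the zero entries of $\ol{v}\in\R^{2p}$. Writing $\ell:=p-\|B\ol{u}\|_{0}$ for the cosparsity of $\ol{u}$ — which coincides with that of the target $u^{\ast}$ in the situation $\ol{u}=u^{\ast}$ — and $k:=\|B\ol{u}\|_{0}=p-\ell$, the previous step shows that each of the $\ell$ indices $r$ with $(B\ol{u})_{r}=0$ contributes two zeros to $\ol{J}$ (one in $v^{1}$, one in $v^{2}$), while each of the $k$ indices $r$ with $(B\ol{u})_{r}\neq 0$ contributes exactly one. Summing yields $|\ol{J}|=2\ell+k=p+\ell$, and since $\ol{J}\subseteq[\dim(v)]=[2p]$ we obtain $|\ol{J}^{c}|=2p-|\ol{J}|=p-\ell=k$. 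I expect the only point needing care to be the forcing argument in the second step — that optimality of $\ol{w}$ removes all freedom from the slacks — and this rests solely on the trivial uniqueness for $\min\{v^{1}+v^{2}\colon v^{1}-v^{2}=c,\ v^{1},v^{2}\ge 0\}$; no structural property of $A$, of $B=\nabla$, or of assumption \eqref{eq:u-nonzero} enters here.
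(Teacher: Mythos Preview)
Your proof is correct and follows essentially the same approach as the paper: use optimality of $\ol{w}$ together with the constraint $B\ol{u}=\ol{v}^{1}-\ol{v}^{2}$ to pin down each pair $(\ol{v}^{1}_{r},\ol{v}^{2}_{r})$, then count zeros on $\Lambda$ (two each) and on $\Lambda^{c}$ (one each). Your version is slightly more explicit in writing out the decoupled one-dimensional subproblems and the formulas $\ol{v}^{1}_{r}=\max\{(B\ol{u})_{r},0\}$, $\ol{v}^{2}_{r}=\max\{-(B\ol{u})_{r},0\}$, but the argument is the same.
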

\begin{proof}
The minimal objective function value \eqref{eq:primal-LP} is $\sum_{i \in [p]} \ol{v}^{1}_{i} + \ol{v}^{2}_{i}$ with all summands being non-negative. Since $B \ol{u} = \ol{v}^{1}-\ol{v}^{2}$, $(B \ol{u})_{\Lambda}=0$ and optimality of $\ol{v}$ imply $\ol{v}^{1}_{\Lambda}=\ol{v}^{2}_{\Lambda} = 0$, which contributes $2|\Lambda|=2 \ell$ indices to $\ol{J}$. Furthermore, if $(B \ol{u})_{i} = \ol{v}^{1}_{i} - \ol{v}^{2}_{i} < 0$, then optimality of $\ol{v}$ implies $\ol{v}^{1}_{i}=0,\, \ol{v}^{2}_{i} > 0$ and vice versa if $(B \ol{u})_{i} > 0$. Hence $\Lambda^{c}$ supports $|\Lambda^{c}| = p - \ell = k$ vanishing components of $\ol{v}$.
\end{proof}

\begin{corollary} \label{cor:uniqueness}
Let $\ol{w} = (\ol{u},\ol{v}^{1},\ol{v}^{2})$ be a solution of the linear program \eqref{eq:primal-LP} with corresponding index sets $J, \ol{J}$ given by \eqref{eq:def-Jv}, and with component $\ol{u}$ that solves problem \eqref{eq:TVmin_pos} and has cosupport $\Lambda$ with respect to $B$. Then $\ol{w}$ resp.~$\ol{u}$ are unique if and only if 
\begin{equation} \label{eq:NM}
\forall w = \bpm u \\ v \epm,\; v = \bpm v^{1} \\ v^{2} \epm \quad{\rm s.t.}\quad
u \in \mc{N}(A) \setminus \{0\} 
\quad\text{and}\quad 
B u = v^{1}-v^{2}
\end{equation}
the condition
\begin{equation} \label{eq:cor-condition}
\|(B u)_{\Lambda}\|_{1} > 
\big\la (B u)_{\Lambda^{c}}, \sign(B \ol{u})_{\Lambda^{c}} \big\ra
\end{equation}
holds. Furthermore, any unknown $\ell$-cosparse vector $u^{\ast}$ with $A u^{\ast} = b$ can be uniquely recovered as solution $\ol{u}=u^{\ast}$ to \eqref{eq:TVmin_pos} if and only if, for all vectors $u$ conforming to \eqref{eq:NM}, the condition
\begin{equation} \label{eq:cor-condition-all}
\|(B u)_{\Lambda}\|_{1} > 
\sup_{\Lambda \subset [p] \colon |\Lambda|=\ell} 
\;
\sup_{\ol{u} \in \mc{W}_{\Lambda}} 
\big\la (B u)_{\Lambda^{c}}, \sign(B \ol{u})_{\Lambda^{c}} \big\ra
\end{equation}
holds.
\end{corollary}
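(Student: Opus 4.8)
The plan is to unfold the abstract optimality criterion of Theorem~\ref{thm:M-uniqueness}, applied to the linear program \eqref{eq:primal-LP}, into the concrete nullspace inequality \eqref{eq:cor-condition}, and then to quantify over all admissible cosupports to reach \eqref{eq:cor-condition-all}. First I would decode the ingredients of \eqref{eq:cond-uniqueness-M79} in terms of $w=(u,v^{1},v^{2})$: the equation $Mw=0$ reads $u\in\mc{N}(A)$ together with an arbitrary splitting $Bu=v^{1}-v^{2}$, and $\la c,w\ra=\la\eins,v^{1}+v^{2}\ra$. Since no coordinate of $\ol{u}$ vanishes by \eqref{eq:u-nonzero}, the set $J$ involves only $v$-indices, and from the optimality analysis already carried out in the proof of Lemma~\ref{lem:olJ} one reads off that $\ol{J}$ contains both $v^{1}_{r}$ and $v^{2}_{r}$ for $r\in\Lambda$, only $v^{2}_{r}$ for $r\in\Lambda^{c}$ with $(B\ol{u})_{r}>0$, and only $v^{1}_{r}$ for $r\in\Lambda^{c}$ with $(B\ol{u})_{r}<0$. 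Consequently $w_{J}\ge 0$ amounts to $v^{1}_{\Lambda},v^{2}_{\Lambda}\ge 0$ together with one single one-sided constraint on each $r\in\Lambda^{c}$, dictated by $\sign(B\ol{u})_{\Lambda^{c}}$.

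The second and key step is to eliminate $v$. For a fixed $u\in\mc{N}(A)$ I would minimise $\la\eins,v^{1}+v^{2}\ra$ over the splittings $v^{1}-v^{2}=Bu$ obeying the sign constraints just identified. This problem decouples over the rows $r\in[p]$: on $\Lambda$ the minimum of $v^{1}_{r}+v^{2}_{r}$ subject to $v^{1}_{r},v^{2}_{r}\ge 0$, $v^{1}_{r}-v^{2}_{r}=(Bu)_{r}$ equals $|(Bu)_{r}|$, whereas on $\Lambda^{c}$ the lone one-sided constraint forces the minimiser to set the constrained coordinate to $0$, giving $v^{1}_{r}+v^{2}_{r}=(Bu)_{r}\,\sign(B\ol{u})_{r}$; moreover these minima are attained simultaneously. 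Summing over $r$ shows the minimal value of $\la c,w\ra$ over admissible $v$ equals $\mu(u):=\|(Bu)_{\Lambda}\|_{1}+\la (Bu)_{\Lambda^{c}},\sign(B\ol{u})_{\Lambda^{c}}\ra$. A short separate check handles $u=0$: then $v^{1}=v^{2}$, the sign constraints force $v^{1}_{r}=v^{2}_{r}\ge 0$ for \emph{every} $r$, hence $\la c,w\ra=2\la\eins,v^{1}\ra\ge 0$ with equality only for $w=0$, so a nonzero witness of non-uniqueness must have $u\ne 0$.

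Putting these together, a nonzero $w$ with $Mw=0$, $w_{J}\ge 0$, $\la c,w\ra\le 0$ exists iff some $u\in\mc{N}(A)\setminus\{0\}$ has $\mu(u)\le 0$; by Theorem~\ref{thm:M-uniqueness}, $\ol{w}$, equivalently $\ol{u}$, is unique iff $\mu(u)>0$ for all such $u$, i.e.\ iff $\|(Bu)_{\Lambda}\|_{1}>-\la (Bu)_{\Lambda^{c}},\sign(B\ol{u})_{\Lambda^{c}}\ra$ for all $u\in\mc{N}(A)\setminus\{0\}$. Since $\mc{N}(A)$ is a subspace, replacing $u$ by $-u$ converts this into exactly \eqref{eq:cor-condition}; the auxiliary $v^{1},v^{2}$ in \eqref{eq:NM} only record the splitting $Bu=v^{1}-v^{2}$ and never enter \eqref{eq:cor-condition}. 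For the final assertion I would quantify over cosupports. If \eqref{eq:cor-condition-all} holds, then for any $\ell$-cosparse $u^{\ast}$ with $Au^{\ast}=b$ and cosupport $\Lambda$, every feasible competitor $\tilde u=u^{\ast}+h$ with $h\in\mc{N}(A)\setminus\{0\}$ satisfies, via $|a+b|\ge\sign(a)(a+b)$ on $\Lambda^{c}$ and $(Bu^{\ast})_{\Lambda}=0$, the estimate $\TV(\tilde u)-\TV(u^{\ast})\ge\|(Bh)_{\Lambda}\|_{1}+\la(Bh)_{\Lambda^{c}},\sign(Bu^{\ast})_{\Lambda^{c}}\ra>0$, where the last inequality is \eqref{eq:cor-condition-all} with $u=-h$ and $\ol{u}=u^{\ast}\in\mc{W}_{\Lambda}$; hence $u^{\ast}$ is the unique minimiser of \eqref{eq:TVmin_pos}. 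Conversely, if \eqref{eq:cor-condition-all} fails at some $u,\Lambda,\ol{u}$, then, using $\eins\in\mc{W}_{\Lambda}$, a shift $u^{\ast}=\ol{u}+c\eins>0$, perturbed generically inside $\mc{W}_{\Lambda}$ so that its cosupport stays equal to $\Lambda$, is feasible for $b:=Au^{\ast}$ and violates \eqref{eq:cor-condition}, hence fails to be recovered by the first part.

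I expect the main obstacle to be the bookkeeping in the first step --- pinning down exactly which of $v^{1}_{r},v^{2}_{r}$ lies in $\ol{J}$ and tracking how this makes $\sign(B\ol{u})_{\Lambda^{c}}$ appear --- together with, in the converse of the last assertion, realising the extremal pair $(\Lambda,\ol{u})$ as a genuine nonnegative vector of cosparsity \emph{exactly} $\ell$, where translation invariance $\eins\in\mc{W}_{\Lambda}$ and a genericity argument are needed. The elimination of $v$, though the conceptual crux, is an elementary componentwise computation, and the passage from Mangasarian's criterion to \eqref{eq:cor-condition} is then just the sign-symmetry $u\mapsto -u$ of $\mc{N}(A)$.
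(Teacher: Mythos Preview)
Your proposal is correct and follows essentially the same route as the paper: apply Mangasarian's criterion (Theorem~\ref{thm:M-uniqueness}), decode $Mw=0$ and $w_J\ge 0$ in terms of $(u,v^1,v^2)$ using Lemma~\ref{lem:olJ}, then eliminate $v$ by row-wise minimisation of $\la\eins,v^1+v^2\ra$ to obtain a condition on $u\in\mc{N}(A)\setminus\{0\}$ alone.

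Two minor points of comparison. First, your sign bookkeeping on $\Lambda^c$ is the one consistent with the proof of Lemma~\ref{lem:olJ} (for $(B\ol{u})_r>0$ it is $v^2_r$ that lies in $\ol J$), yielding the minimum value $\|(Bu)_\Lambda\|_1+\la(Bu)_{\Lambda^c},\sign(B\ol{u})_{\Lambda^c}\ra$; you then invoke the symmetry $u\mapsto -u$ to reach \eqref{eq:cor-condition}. The paper's choice \eqref{eq:v-choice-2} pairs the constraints the other way round and obtains the opposite sign directly; since the condition is quantified over the symmetric set $\mc{N}(A)\setminus\{0\}$, both computations give the same final statement. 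Second, you supply more detail on \eqref{eq:cor-condition-all} than the paper does: a direct $\TV$ comparison for sufficiency and a construction for necessity. The paper dispatches this step in one sentence. Your necessity sketch (shift by $c\eins$ and perturb inside $\mc{W}_\Lambda$) is at the same level of rigor as the paper's treatment; the only delicate point, which neither you nor the paper fully addresses, is ensuring the perturbation does not flip the sign pattern on $\Lambda^c$ and thereby destroy the assumed violation.
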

\begin{remark}
Condition \eqref{eq:cor-condition} corresponds up to a magnitude $|\cdot|$ operation applied to the right-hand side to the statement of \cite[Thm.~7]{Nam2013}. The authors do not present an explicit proof, but mention in \cite[App.~A]{Nam2013} that the result follows by combining a strictly local minimum condition with convexity of the optimization problem for recovery.

Our subsequent explicit proof elaborates basic LP-theory due to \cite{Mangasarian1979} and Thm.~\ref{thm:M-uniqueness}.
\end{remark}
\begin{proof}[Proof of Corollary \ref{cor:uniqueness}]
Theorem \eqref{thm:M-uniqueness} asserts that $\ol{w}$ is unique iff for every $w \in \mc{N}(M) \setminus \{0\}$ with $w_{J} \geq 0$ the condition $\la c, w \ra > 0$ holds. In view of the definition \eqref{eq:def-Mq} of $M$, vectors $w \in \mc{N}(M) \setminus \{0\}$ are determined by \eqref{eq:NM}. Condition \eqref{eq:NM} excludes vectors $0 \neq w = (0,v^{1},v^{2}) \in \mc{N}(M)$ because then $v^{1}=v^{2}$ and $w_{J} \geq 0$ implies exclusion of those $w$ by $\la c,w \ra \leq 0$ in \eqref{eq:cond-uniqueness-M79}.

It remains to turn the condition \eqref{eq:cond-uniqueness-M79} into a condition for vectors $u$ given by vectors $w = (u,v^{1},v^{2})$ satisfying \eqref{eq:NM}. To this end, we focus on such vectors $w$ with $w_{J} \geq 0$ that minimize $\la c, w \ra$. We have $w_{J}=v_{\ol{J}}$ by \eqref{eq:def-Jv}, and the proof of Lemma \ref{lem:olJ} shows that $v_{\ol{J}} \geq 0$ decomposes into
\begin{itemize}
\item
$2 \ell$ conditions $v^{1}_{\Lambda}, v^{2}_{\Lambda} \geq 0$ leading to the choice
\begin{equation} \label{eq:v-choice-1}
\begin{cases}
v^{1}_{i} = (B u)_{i} \geq 0,\quad v^{2}_{i}=0, 
& \text{if}\; (B u)_{i} \geq 0, \\ 
v^{1}_{i} = 0,\quad v^{2}_{i}=-(B u)_{i} \geq 0,
& \text{if}\; (B u)_{i} \leq 0,
\end{cases}
\qquad i \in \Lambda
\end{equation}
minimizing $\la c, w \ra$;
\item
$k$ conditions supported by $\Lambda^{c}$ of the form: either $v^{1}_{i} \geq 0$ or $v^{2}_{i} \geq 0$ depending on $(B \ol{u})_{i} > 0$ or $(B \ol{u})_{i} < 0,\, i \in \Lambda^{c}$. In order to minimize $\la c, w \ra$, this leads to the choice
\begin{equation} \label{eq:v-choice-2}
\begin{cases}
v^{1}_{i} = 0,\quad
v^{2}_{i} = -(B u)_{i} \leq 0,
& \text{if}\; (B u)_{i} \geq 0,\, (B \ol{u})_{i} > 0, \\
v^{1}_{i} = 0,\quad
v^{2}_{i} = (B u)_{i} \geq 0,
& \text{if}\; (B u)_{i} \leq 0,\, (B \ol{u})_{i} > 0, \\
v^{1}_{i} = (B u)_{i} \geq 0,\quad
v^{2}_{i} = 0,
& \text{if}\; (B u)_{i} \geq 0,\, (B \ol{u})_{i} < 0, \\
v^{1}_{i} = (B u)_{i} \leq 0,\quad
v^{2}_{i} = 0,
& \text{if}\; (B u)_{i} \leq 0,\, (B \ol{u})_{i} < 0, 
\end{cases}
\qquad i \in \Lambda^{c}.
\end{equation}
\end{itemize}
By \eqref{eq:primal-LP}, $\la c, w \ra = \la \eins, v^{1} + v^{2} \ra = \la \eins, (v^{1} + v^{2})_{\Lambda} \ra + \la \eins, (v^{1} + v^{2})_{\Lambda^{c}} \ra$, and \eqref{eq:v-choice-1} shows that $\la \eins, (v^{1} + v^{2})_{\Lambda} \ra = \|(B u)_{\Lambda}\|_{1}$ whereas \eqref{eq:v-choice-2} shows that $\la \eins, (v^{1} + v^{2})_{\Lambda^{c}} \ra = \la (B u)_{\Lambda^{c}}, -\sign(B \ol{u})_{\Lambda^{c}} \ra$. Thus $\la c, w \ra \leq 0 \quad\gdw\quad \|(B u)_{\Lambda}\|_{1} - \la (B u)_{\Lambda^{c}}, \sign(B \ol{u})_{\Lambda^{c}} \ra \leq 0$, and non-existence of such $w$ means $\la c, w \ra > 0$ for every such $w$, which equals \eqref{eq:NM} and \eqref{eq:cor-condition}.

Finally, generalizing condition \eqref{eq:cor-condition} to all vectors $u^{\ast} \in \mc{W}_{\Lambda}$ and all possible cosupports $\Lambda$ leads to \eqref{eq:cor-condition-all}.
\end{proof}

Conditions \eqref{eq:cor-condition} and \eqref{eq:cor-condition-all} clearly indicate the direct influence of cosparsity on the recovery performance: If $\ell=|\Lambda|$ increases, then these conditions will more likely hold.

On the other hand, these results are mainly theoretical since numerically checking \eqref{eq:cor-condition-all} is infeasible. This motivates the comprehensive experimental assessment of recovery properties reported in Section \ref{sec:Experiments}.

\section{Numerical Experiments}\label{sec:Experiments}

In this section, we relate the previously derived bounds on the required image cosparsity 
that guarantees uniqueness in case of known or unknown cosupport $\Lambda$
to numerical experiments.

\subsection{Set-Up}\label{sec:num_setup}
This section describes how we generate 2D or 3D images for a given cosparsity $\ell$
and how we acquire measurements.
\subsubsection{Test Images}\label{TestImages}
Recall from Section \ref{sec:co-definitions} that the sparsity of the image gradient is denoted by $k$ and the cosparsity by $\ell$,
\begin{subequations}
	\begin{align}
		k& = \| B u \|_0 = \vert \text{supp}(B u) \vert,\quad B \in \mathbb{R}^{p \times n},\\
		\ell & =  p - \|B u \|_0 = p-k,
	\end{align}
\end{subequations}
$\Lambda := \{r \in [p] : (Bu)_r = 0\}$ denotes the cosupport of the input image $u$ with respect to the analysis operator $B$, and $\Lambda^c = [p]\setminus \Lambda$ denotes the complement of the set $\Lambda$.

Using the parametrization
\begin{align}
\label{eq:def-rho}
\rho &:= \frac{k}{n}
\intertext{with}
\label{eq:def-knp}
k := p - \ell
\qquad\text{and}\qquad
n &= \begin{cases}
d^{2} & \text{in 2D} \\
d^{3} & \text{in 3D}
\end{cases}
\qquad,\qquad
p = \begin{cases}
2 d (d-1) & \text{in 2D} \\
3 d^{2} (d-1) & \text{in 3D}
\end{cases},
\end{align}
we generated random 2D and 3D images composed of randomly located ellipsoids with random radii along the coordinate axes. Figures \ref{fig:ellipsoids-2D} and \ref{fig:ellipsoids-3D} depict a small sample of these images for illustration and provide the parameter ranges.

\begin{figure}[htbp]
\begin{center}
\includegraphics[width=\textwidth]{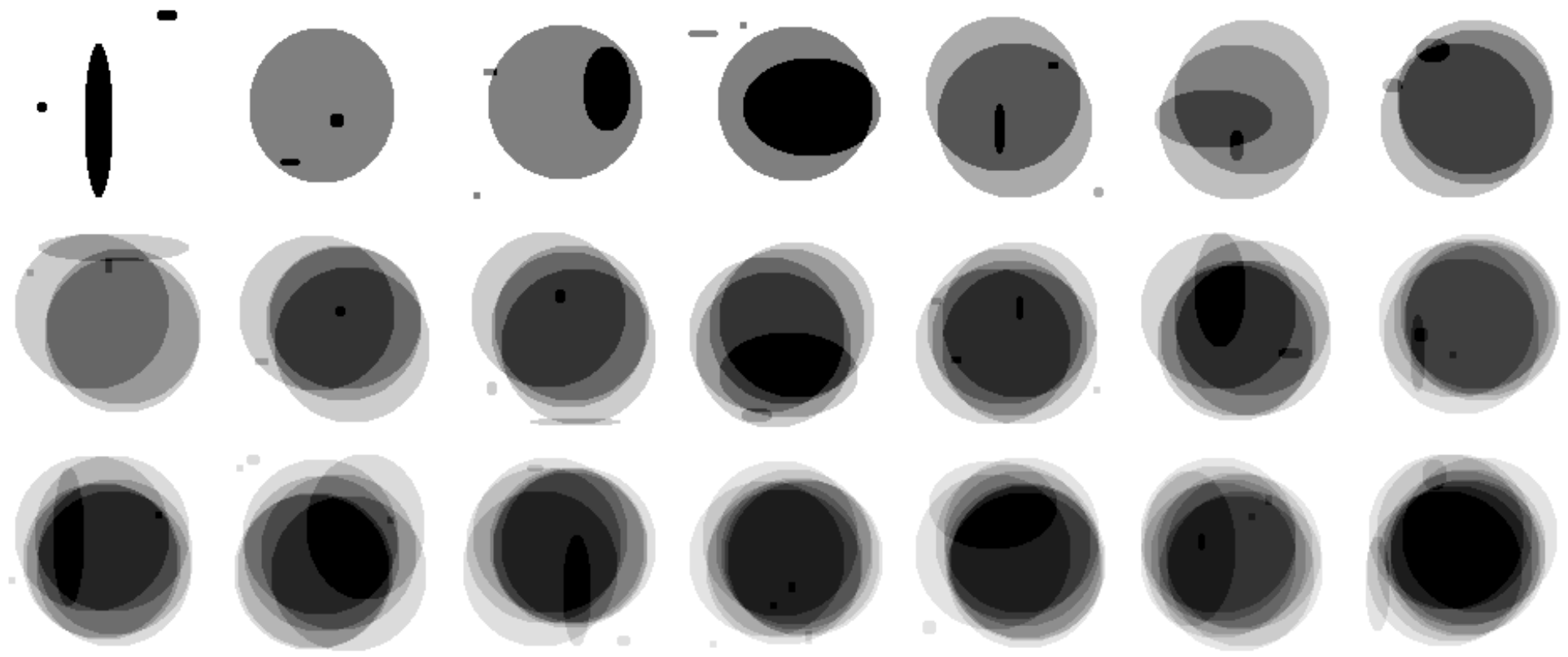}
\caption{Random images with varying cosparsity $\ell=p-k$, parametrized by $\rho$ \eqref{eq:def-rho}. For each dimension $d = 80 \dotsb 180$, random images were generated for $\rho = 0.005 \dotsb 0.22$. The figure shows a sample image for a subset of increasing values of $\rho$ and $d=120$.
}
\label{fig:ellipsoids-2D}
\end{center}
\end{figure}
\begin{figure}
\centerline{
\includegraphics[width=0.15\textwidth]{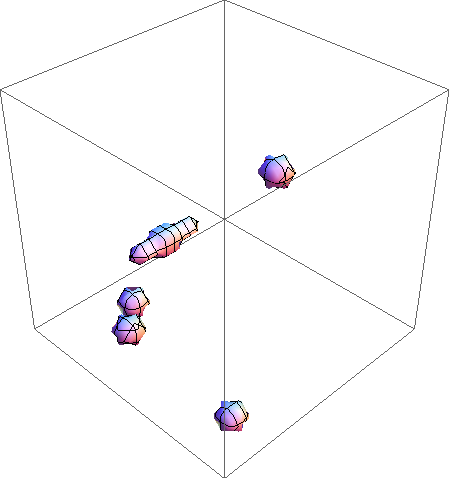}
\includegraphics[width=0.15\textwidth]{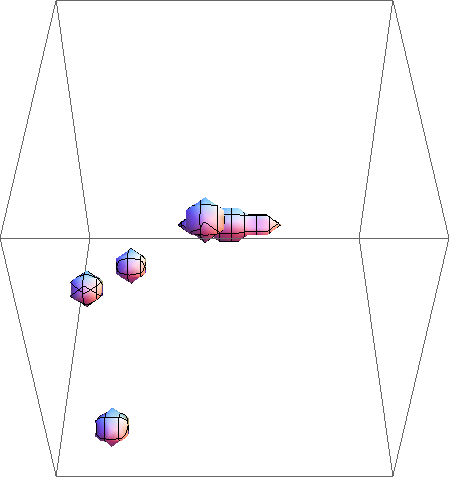}
\includegraphics[width=0.15\textwidth]{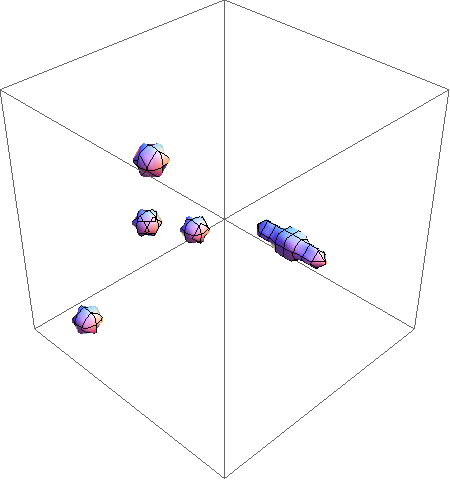}
}

\vspace{0.5cm}
\centerline{
\includegraphics[width=0.15\textwidth]{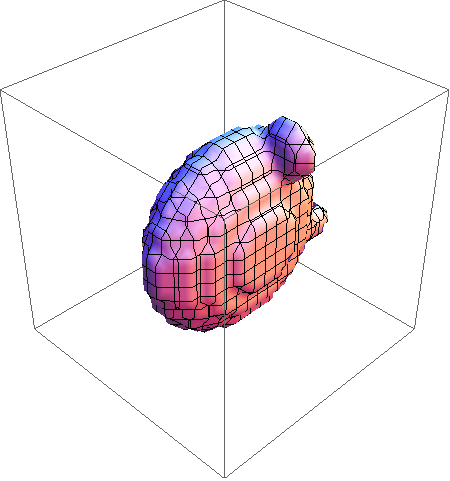}
\includegraphics[width=0.15\textwidth]{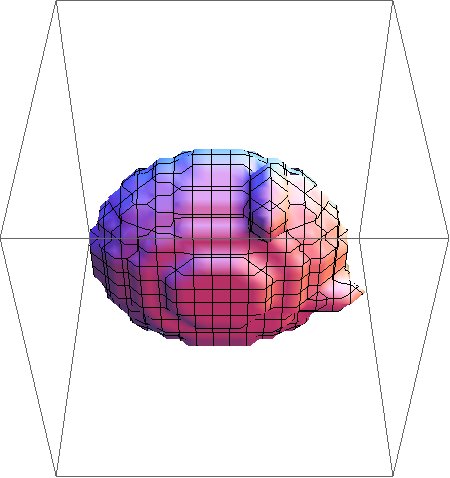}
\includegraphics[width=0.15\textwidth]{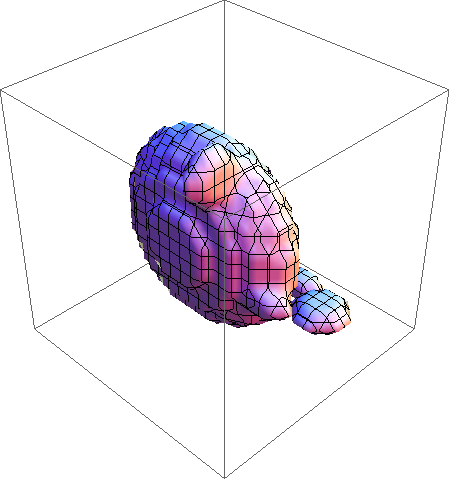}
\hfill
\includegraphics[width=0.15\textwidth]{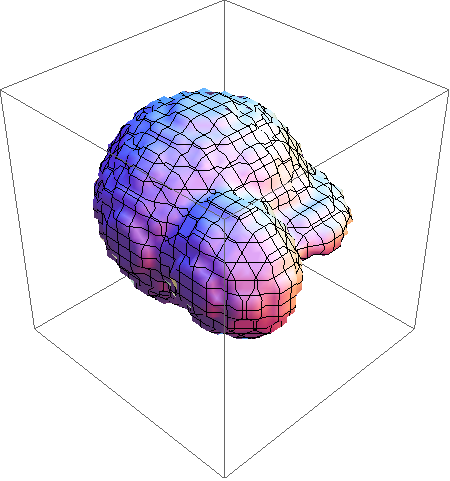}
\includegraphics[width=0.15\textwidth]{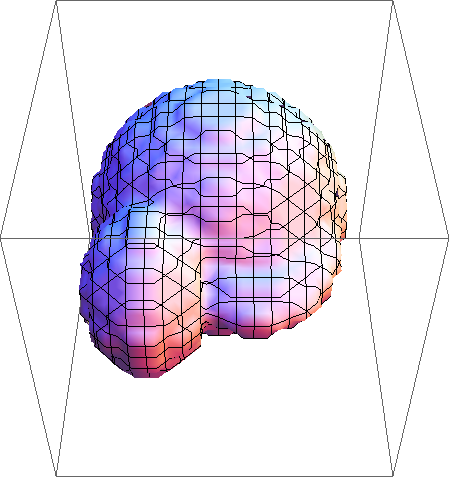}
\includegraphics[width=0.15\textwidth]{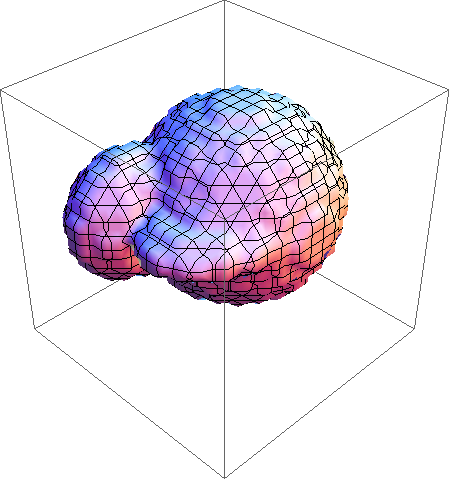}
}
\centerline{
\includegraphics[width=0.15\textwidth]{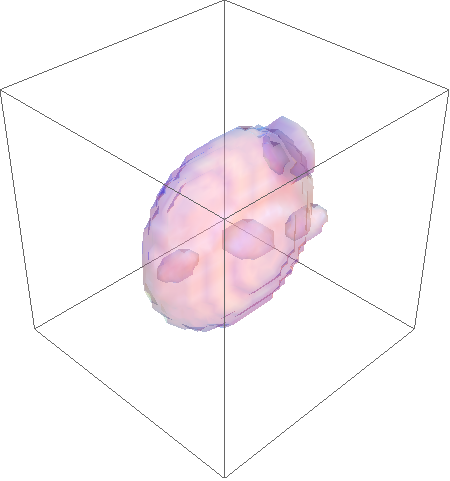}
\includegraphics[width=0.15\textwidth]{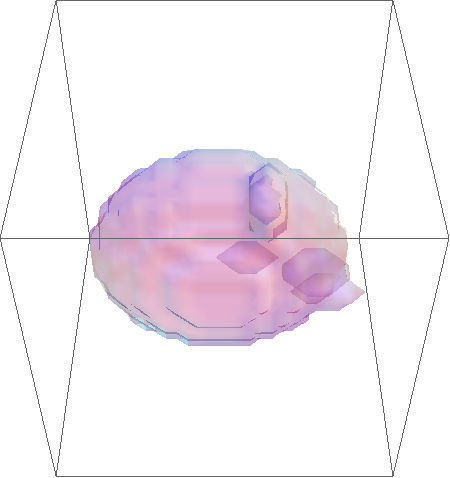}
\includegraphics[width=0.15\textwidth]{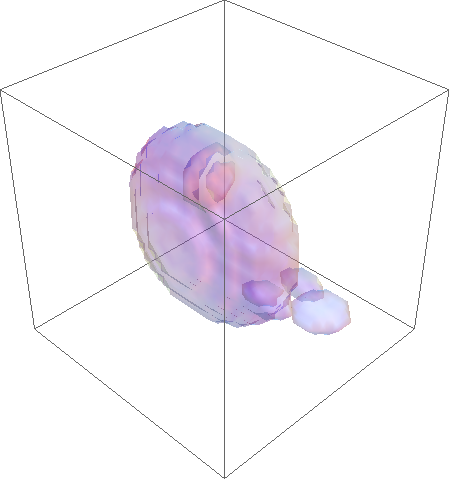}
\hfill
\includegraphics[width=0.15\textwidth]{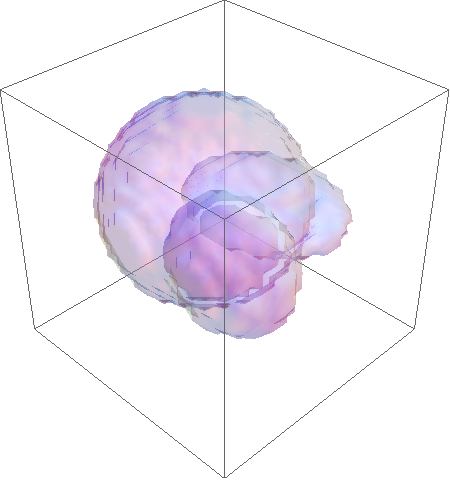}
\includegraphics[width=0.15\textwidth]{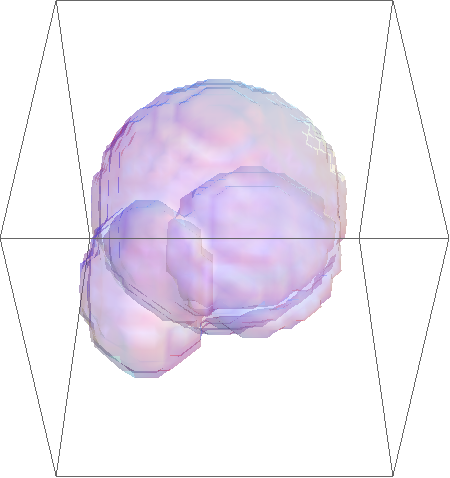}
\includegraphics[width=0.15\textwidth]{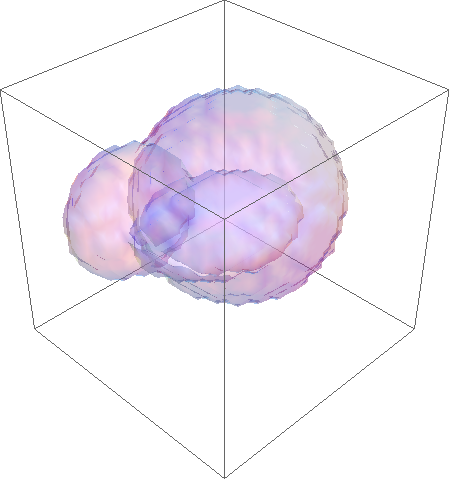}
}

\vspace{0.5cm}
\centerline{
\includegraphics[width=0.15\textwidth]{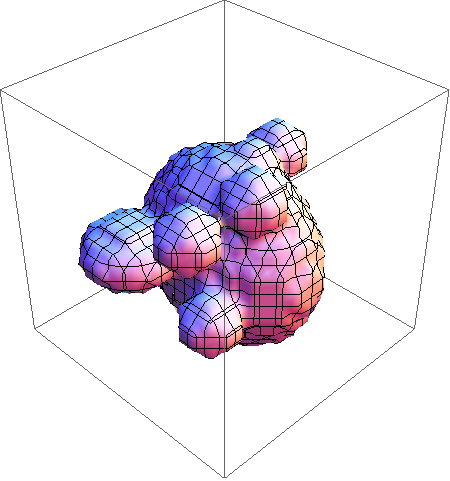}
\includegraphics[width=0.15\textwidth]{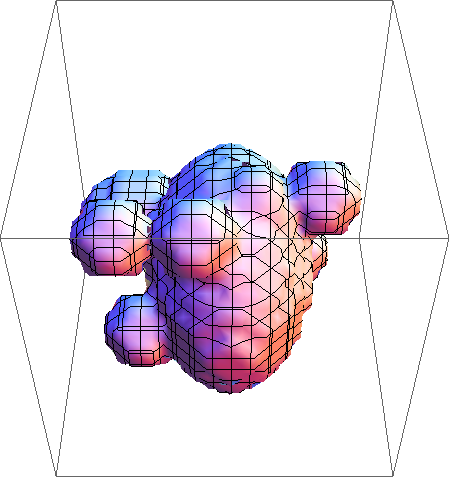}
\includegraphics[width=0.15\textwidth]{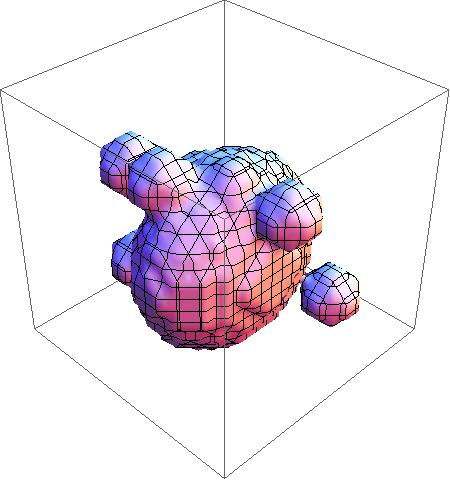}
\hfill
\includegraphics[width=0.15\textwidth]{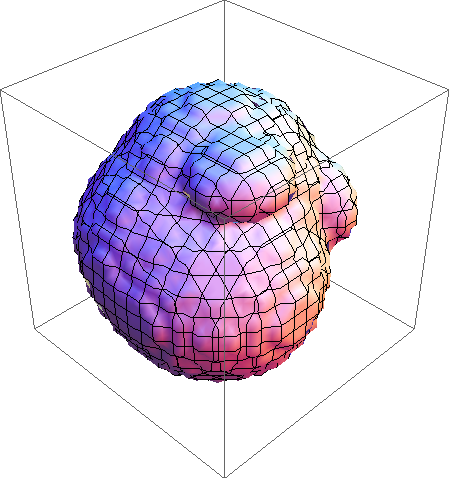}
\includegraphics[width=0.15\textwidth]{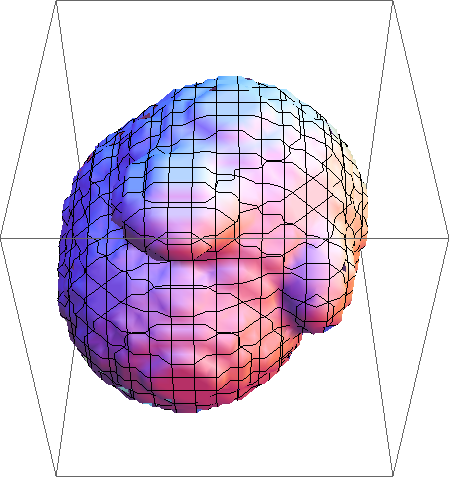}
\includegraphics[width=0.15\textwidth]{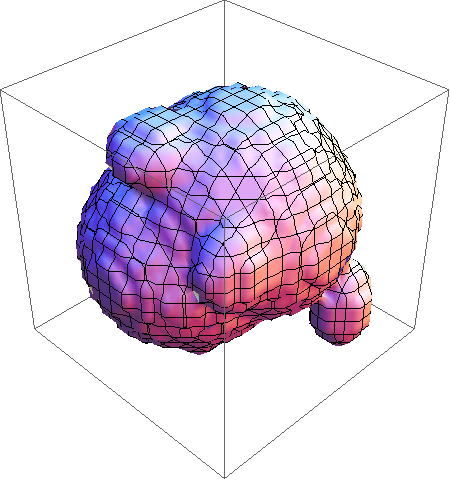}
}
\centerline{
\includegraphics[width=0.15\textwidth]{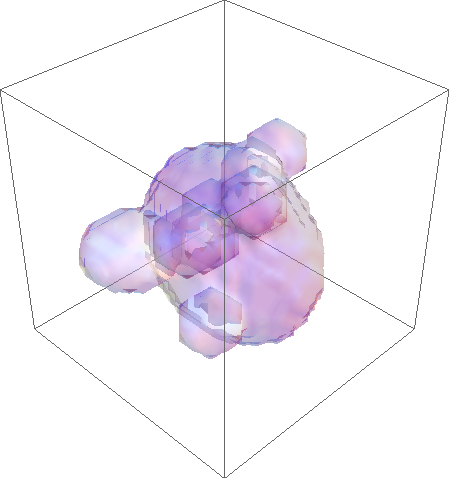}
\includegraphics[width=0.15\textwidth]{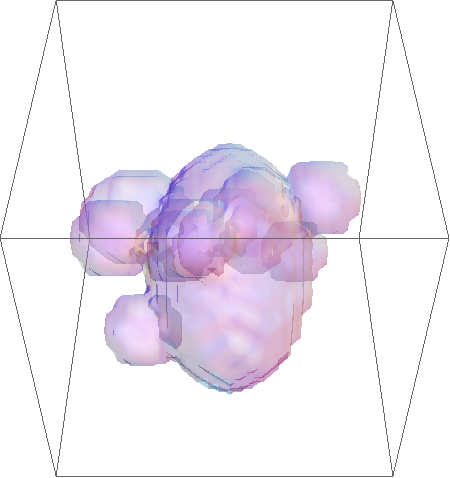}
\includegraphics[width=0.15\textwidth]{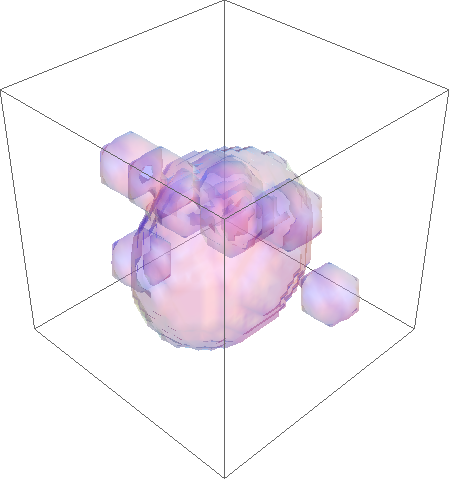}
\hfill
\includegraphics[width=0.15\textwidth]{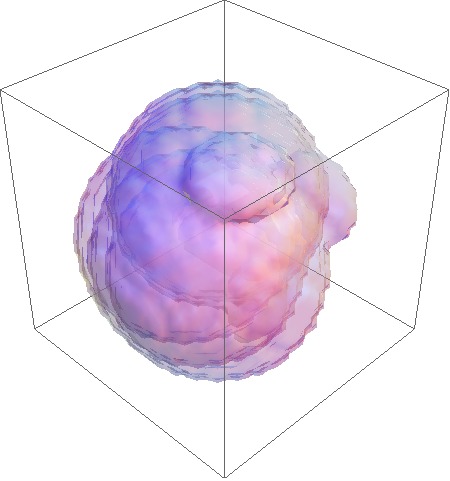}
\includegraphics[width=0.15\textwidth]{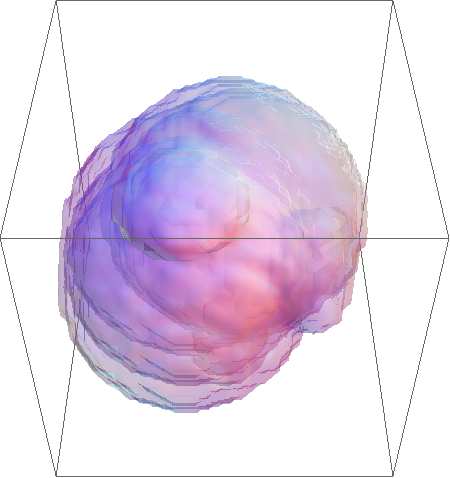}
\includegraphics[width=0.15\textwidth]{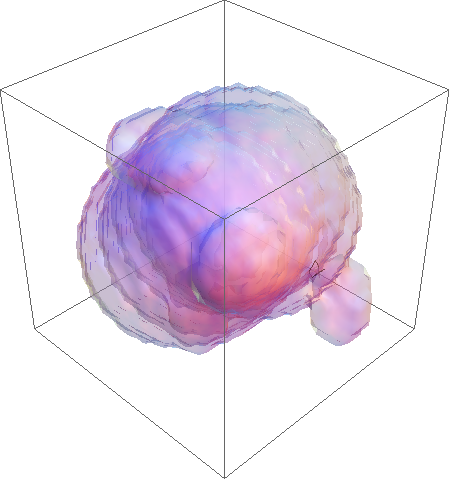}
}
\caption{
Random images with varying cosparsity $\ell=p-k$, parametrized by $\rho$ \eqref{eq:def-rho}. For dimension $d=31$, random images were generated for $\rho = 0.0032 \dotsb 1.01$. The figure shows a sample image for five different values of $\rho$, each plotted from three different viewpoints.
}
\label{fig:ellipsoids-3D}
\end{figure}

\subsubsection{Tomographic Projections}\label{Sensors}
Images in 2D are undersampled by the projection matrices from Section \ref{sec:setup_3D}, with parameters listed in Table \ref{tab:1}. In 3D
we consider the two projection matrices from
Section \ref{sec:setup_3D}, see Fig.~\ref{fig:3C3D} and Fig.~\ref{fig:4C3D}. We also consider a perturbation of each $A$.
Each perturbed matrix $\tilde A$ has the same sparsity structure as $A$, but random entries drawn from the standard
uniform distribution on the open interval $(0.9,1.1)$.


\subsection{Optimization}

To recover a $\ell$-cosparse test image $\ol{u}$, 
we solve the LP relaxation \eqref{eq:primal-LP} of
\eqref{eq:SNTV_unknown}, where we take into account the nonnegativity of $\ol{u}$.
The relaxation is obtained from \eqref{eq:TVmin_pos} by considering two additional variables ${v}^1$ and
$v^2$ which represent the positive and negative part of $Bu$.
In cases where we assume that $\Lambda$ is known, we add the constraint $B_\Lambda u=0$ and solve the LP with the same objective as
\eqref{eq:primal-LP}, but with the polyhedral feasible set defined by
\begin{equation}\label{eq:M_known_cosupport}
M := \bpm  
		B_{\Lambda^c} & -I_{\Lambda^c} & I_{\Lambda^c}\\
		B_{\Lambda} & 0 & 0 \\
		A & 0 & 0
	 \epm \quad {\rm and} \quad q := \bpm 0 \\0 \\ b \epm.
\end{equation}
The resulting LPs were solved with the help of a standard LP solver \footnote{MOSEK http://mosek.com/}.
The reconstruction is considered \emph{successful} if the solution $u$ of the above described LPs
is within a small distance from the original $\ell$-cosparse $\ol{u}$ generating the data, and
$\|u-\ol{u}\|_2\le \veps n$ holds, with $\veps=10^{-6}$ in 2D and $\veps=10^{-8}$ in 3D.

\subsection{Phase transitions}

Phase transitions display the empirical probability of exact recovery over the space of parameters that characterize the problem (cf.~\cite{DonohoT10}). Our parametrization relates to the design of the projection matrices $A \in \mathbb{R}^{m \times n}$.
Because both $m$ and $n$ depend on $d$, we choose $d$ as an \emph{oversampling parameter}, analogously to the \emph{undersampling parameter} $\rho = \frac{m}{n}$ used in \cite{DonohoT10}.

We analyze the influence of the image cosparsity, or equivalently of the image gradient sparsity, 
on the recovery via \eqref{eq:primal-LP} or \eqref{eq:M_known_cosupport}. We assess
empirical bounds in relation with the theoretically required sparsity that guarantees  exact recovery,
described as an empirical phase transition of $\rho$ depending on $d$. This \emph{phase transition} $\rho(d)$ indicates the necessary relative sparsity $\rho$ to recover a $\ell$-cosparse image with overwhelming probability by convex programming.

For each $d \in \{ 80, 90, \dots , 170, 180\}$ and for each relative sparsity $\rho$, we generated 70 images for the 2D case and 50 images for the 3D case, as illustrated in Section \ref{TestImages}, together with corresponding measurements
using the matrices from Section \ref{Sensors}.
This in turn gave us $d,n,m$ and $k$, defining a point $(d,\rho) \in \left[0,1 \right]^2$. This range was discretized into cells so as to accumulate in a $(d,\rho)$ cell a $1$ if the corresponding experiment was successful (exact recovery) and $0$ otherwise. In 2D, we performed 10 or 30 such runs for each $(d,\rho)$ pair, for unknown or known cosupport respectively. The success rate of image reconstruction is displayed by gray values: black $\leftrightarrow 0 \%$ recovery rate, white $\leftrightarrow 100\%$ recovery rate. 
In 3D, we analyzed the behavior for two image sizes, $d = 31$ and $d = 41$. The same reasoning as in the 2D case was applied, except that now instead of performing one test with 10 experiments, we ran 6 tests with 30 experiments each, in both cases of unknown and known cosupport. We show the mean value averaged over all 6 tests.

\subsubsection{Recovery of 2D Images}\label{sec:results2D}
The results are shown in Fig.~\ref{fig:res2D_unperturbed} and Fig.~\ref{fig:res2D_perturbed}. The empirical
transitions agree with the analytically derived thresholds up to a scaling factor
$\alpha$. The values of $\alpha$ are listed in Table
\ref{tab:2}. The accordingly rescaled curves are shown as dotted lines in the plots.

All plots display a phase transition and thus exhibit regions where exact image reconstruction has probability equal or close to one.

\begin{figure}[htbp]
\centerline{
\includegraphics[width=0.35\textwidth]{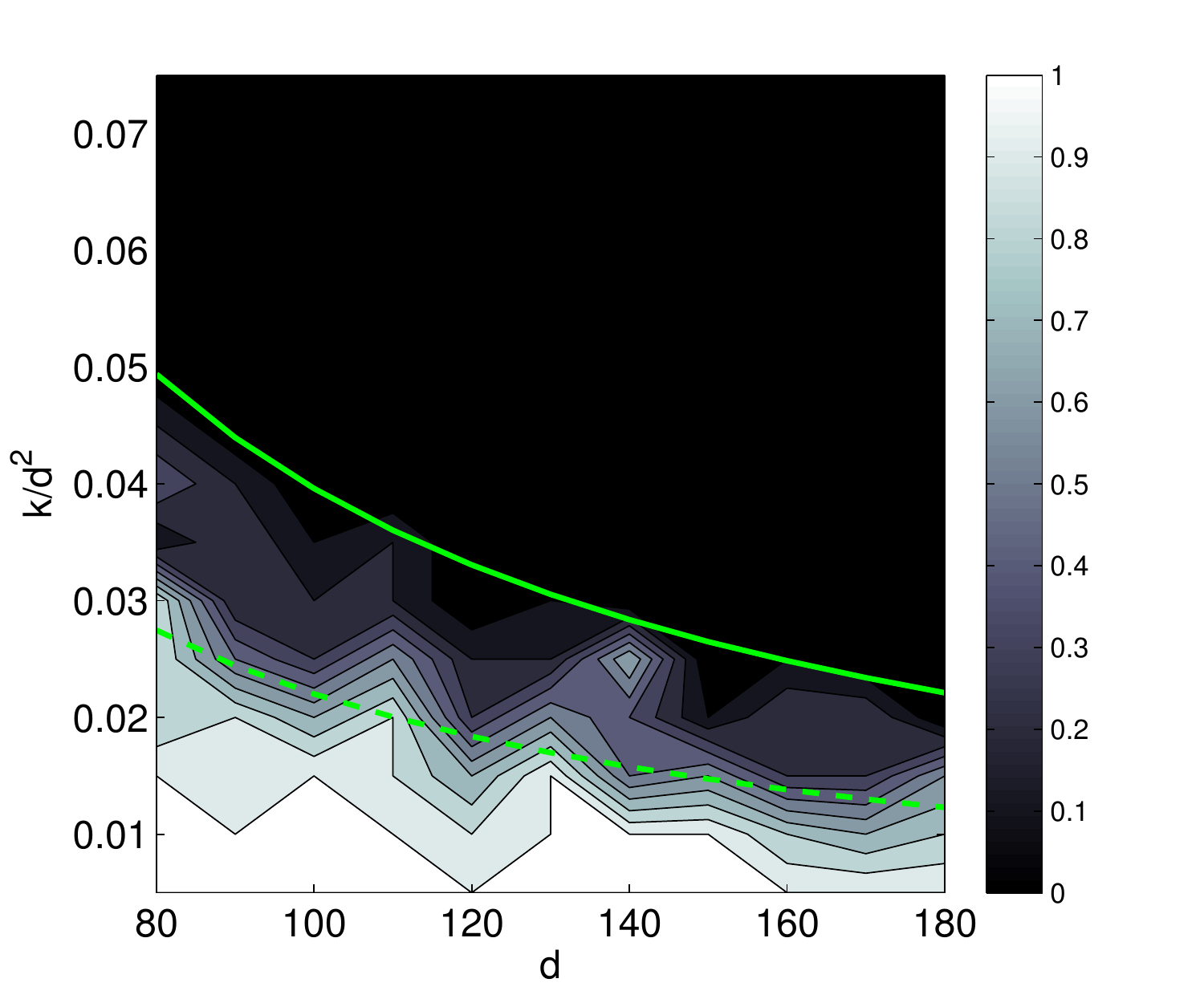} 
\includegraphics[width=0.35\textwidth]{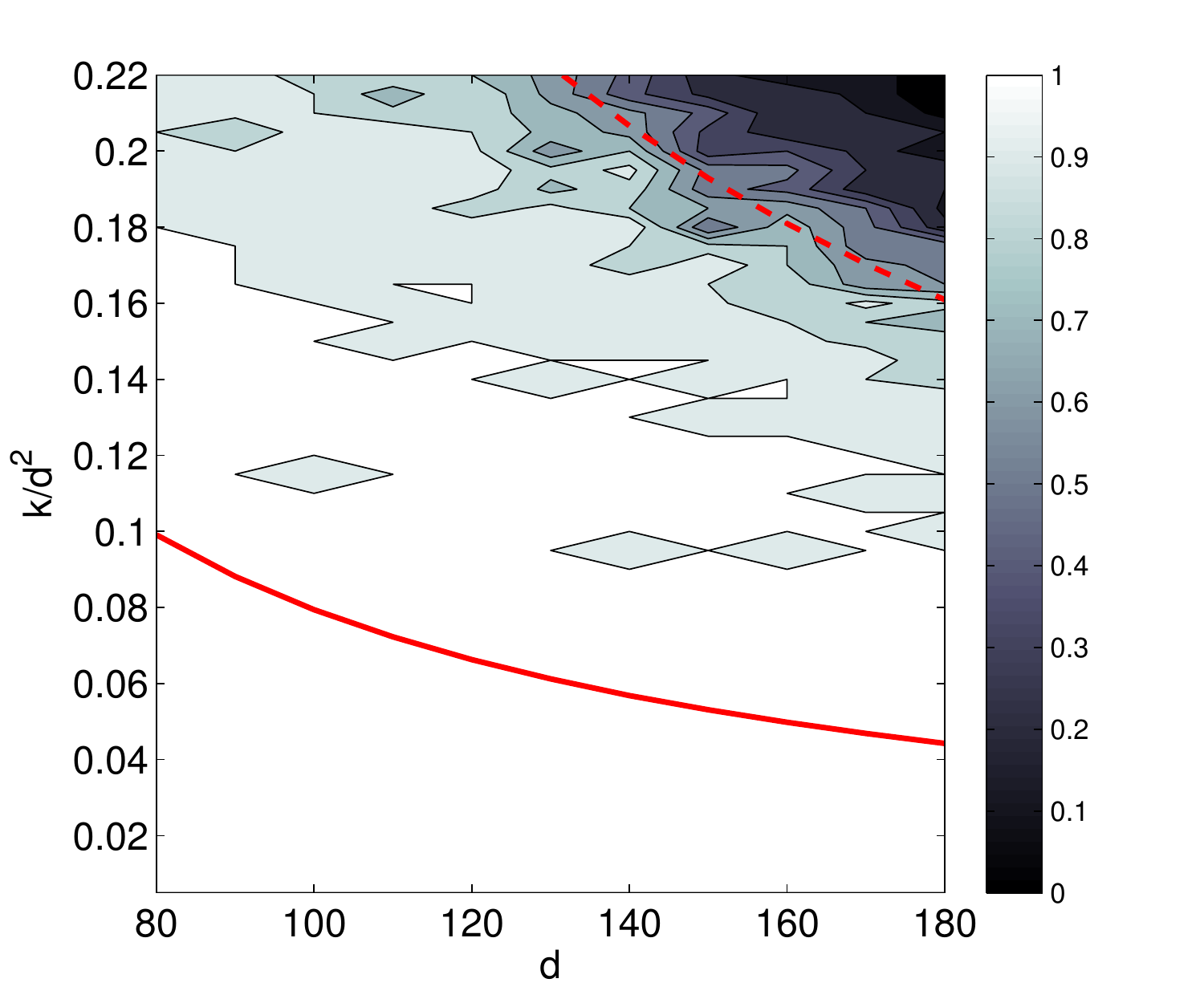} 
}
\vspace{-3mm}
\centerline{
\includegraphics[width=0.35\textwidth]{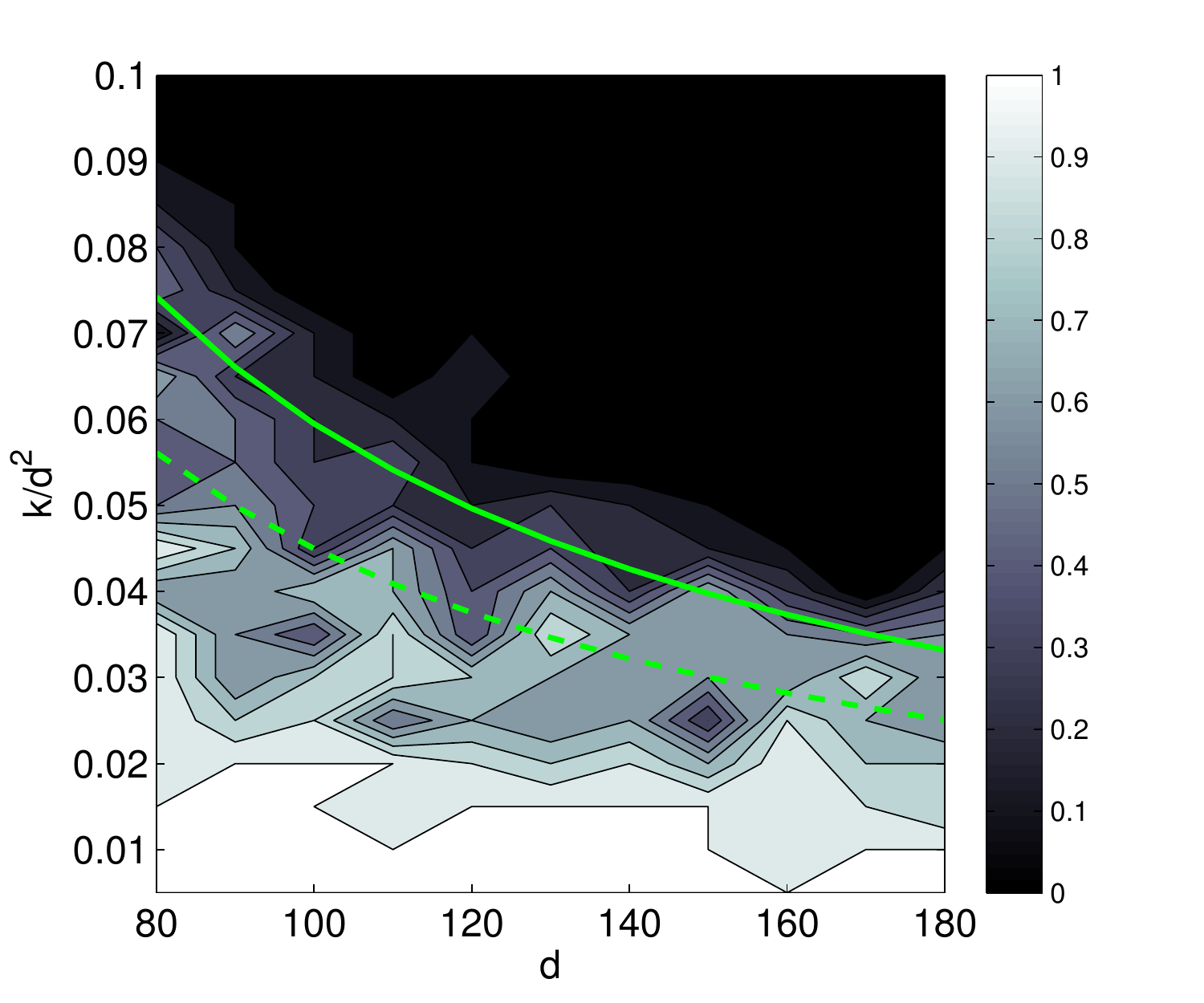} 
\includegraphics[width=0.35\textwidth]{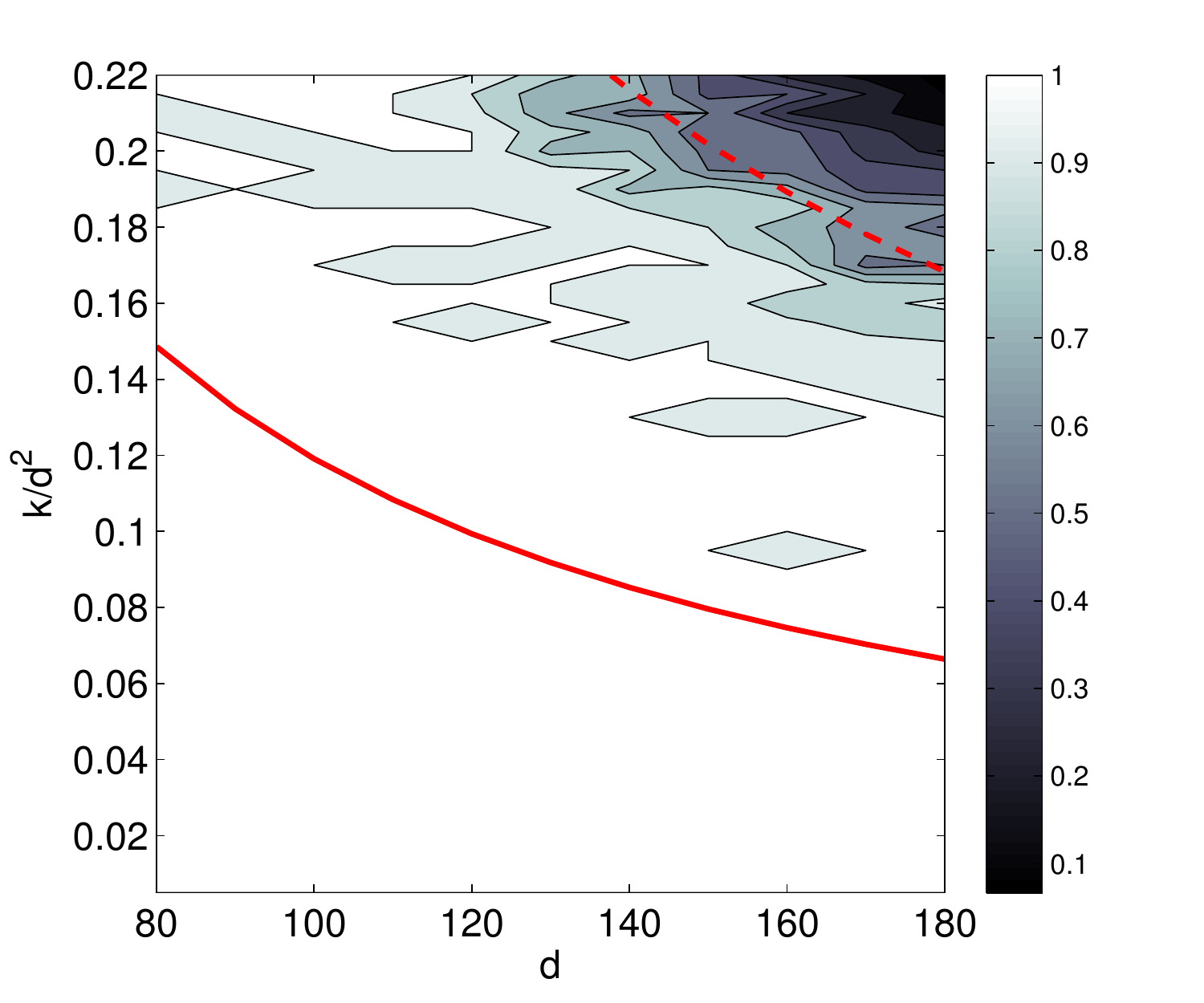} 
}
\vspace{-3mm}
\centerline{
\includegraphics[width=0.35\textwidth]{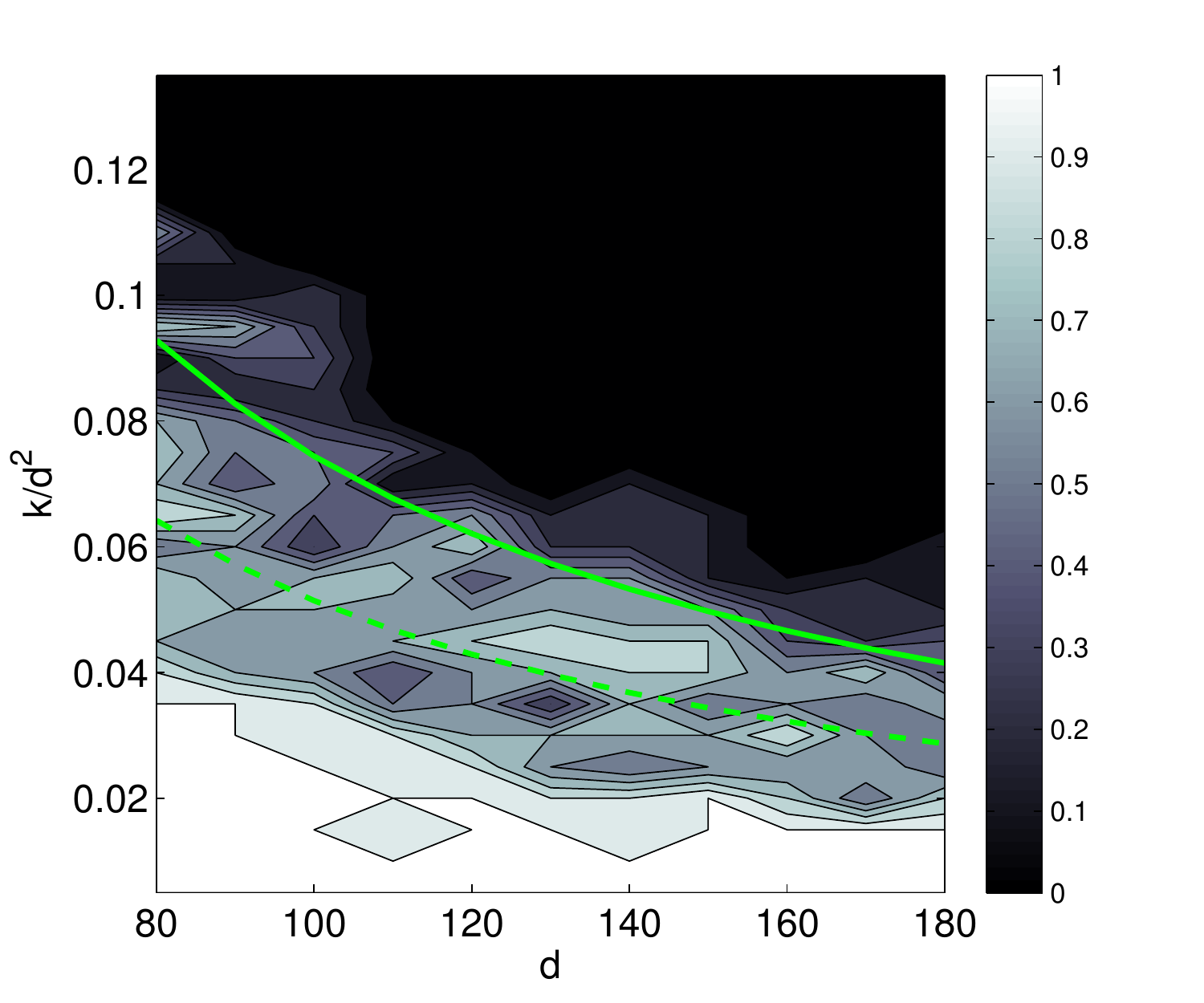} 
\includegraphics[width=0.35\textwidth]{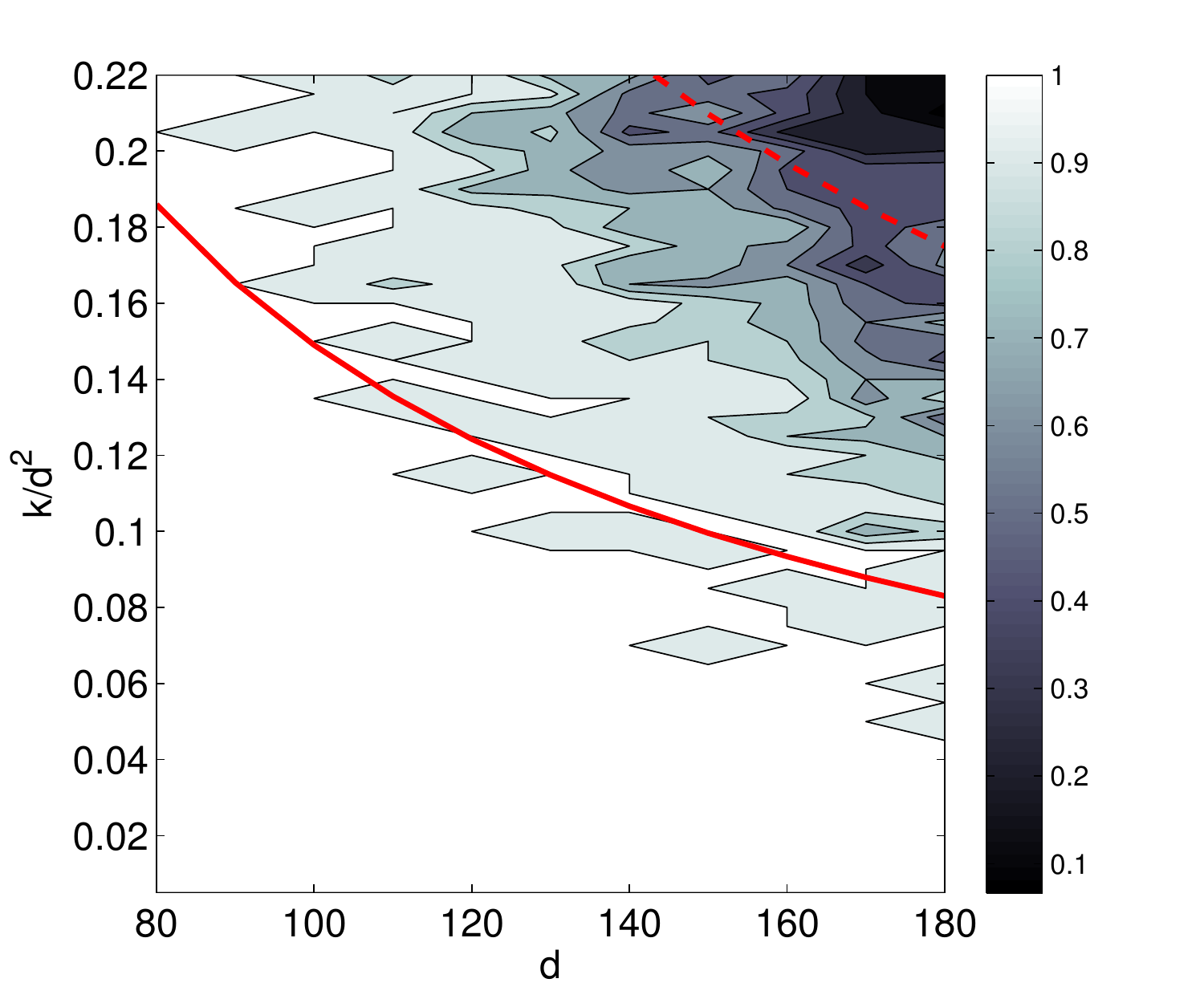} 
}
\vspace{-3mm}
\centerline{
\includegraphics[width=0.35\textwidth]{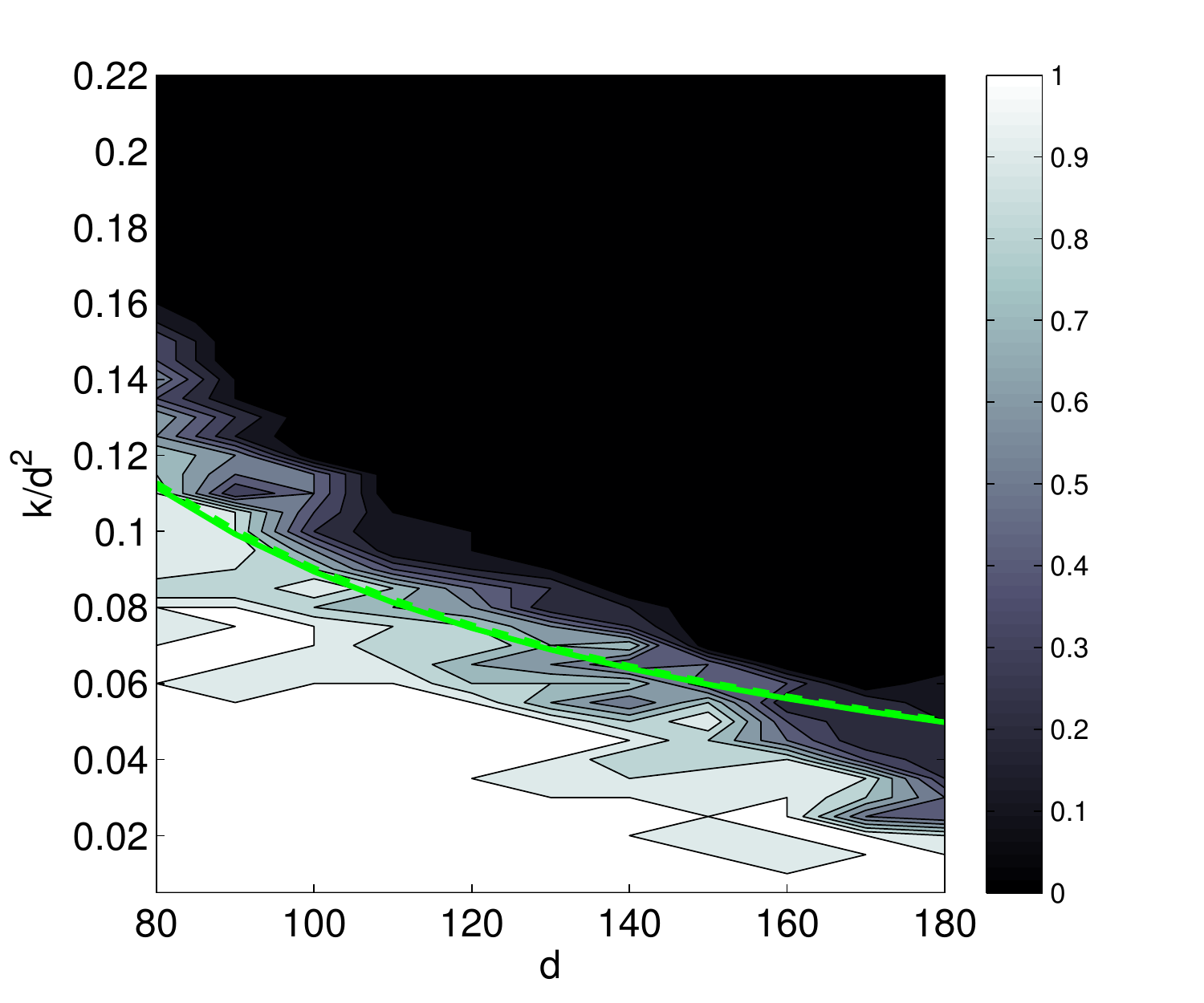} 
\includegraphics[width=0.35\textwidth]{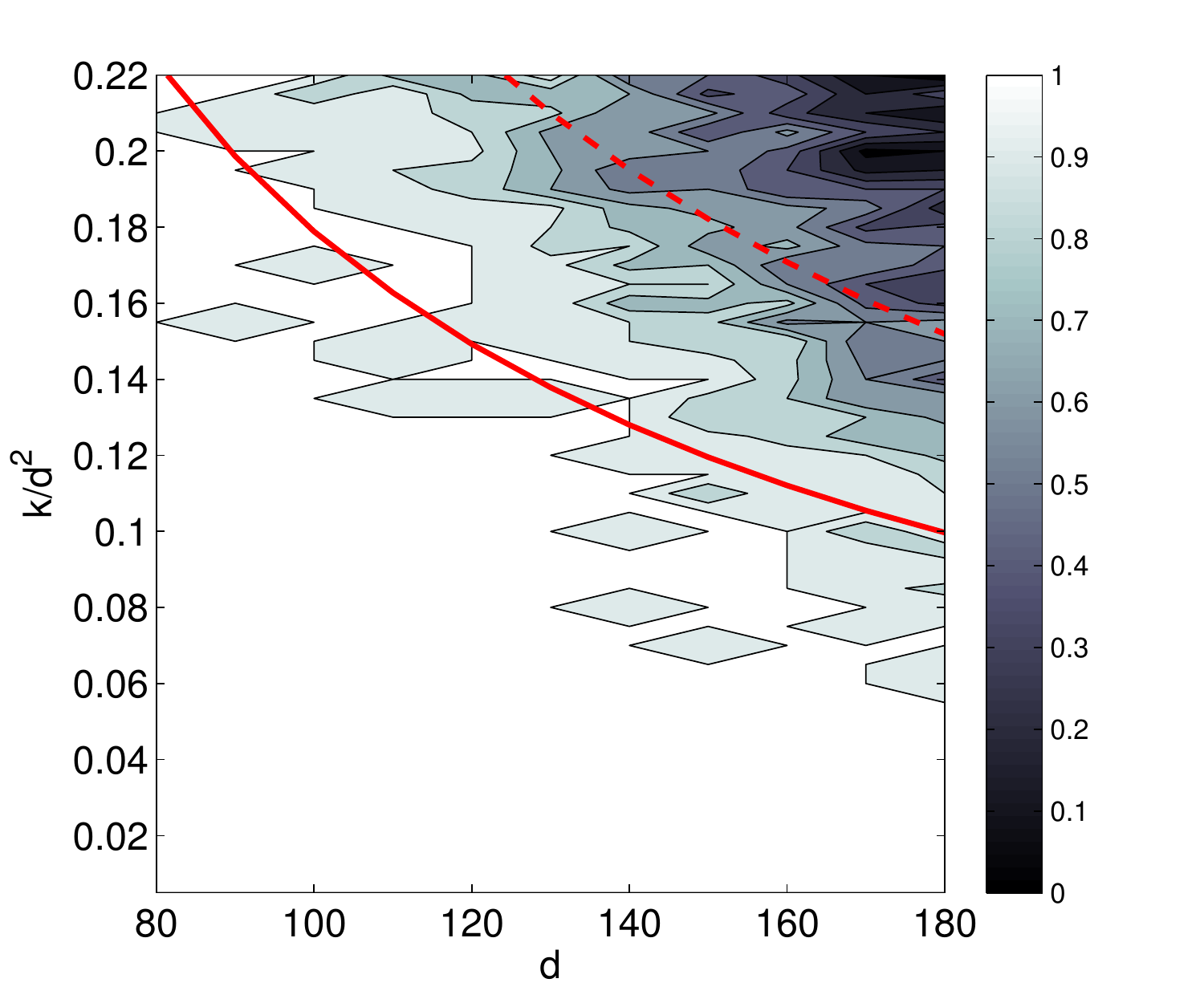} 
}
\caption{Phase transitions for the unperturbed matrix $A$, 2D case, 3, 4, 5 and 6 projecting directions (top to bottom), with unknown (left column) and known (right column) cosupport. The continuous green and red lines depict the theoretical curves \eqref{eq:m-lb-unknown-2D} and \eqref{eq:m-lb-known-2D} respectively. The dashed lines correspond to the empirical threshold, which are all scaled versions of \eqref{eq:m-lb-unknown-2D} (left column) or \eqref{eq:m-lb-known-2D} (right column)
with scaling factors $\alpha$ summarized in Table. \ref{tab:2}.}
\label{fig:res2D_unperturbed}
\end{figure}


\begin{figure}[htbp]
\centerline{
\includegraphics[width=0.35\textwidth]{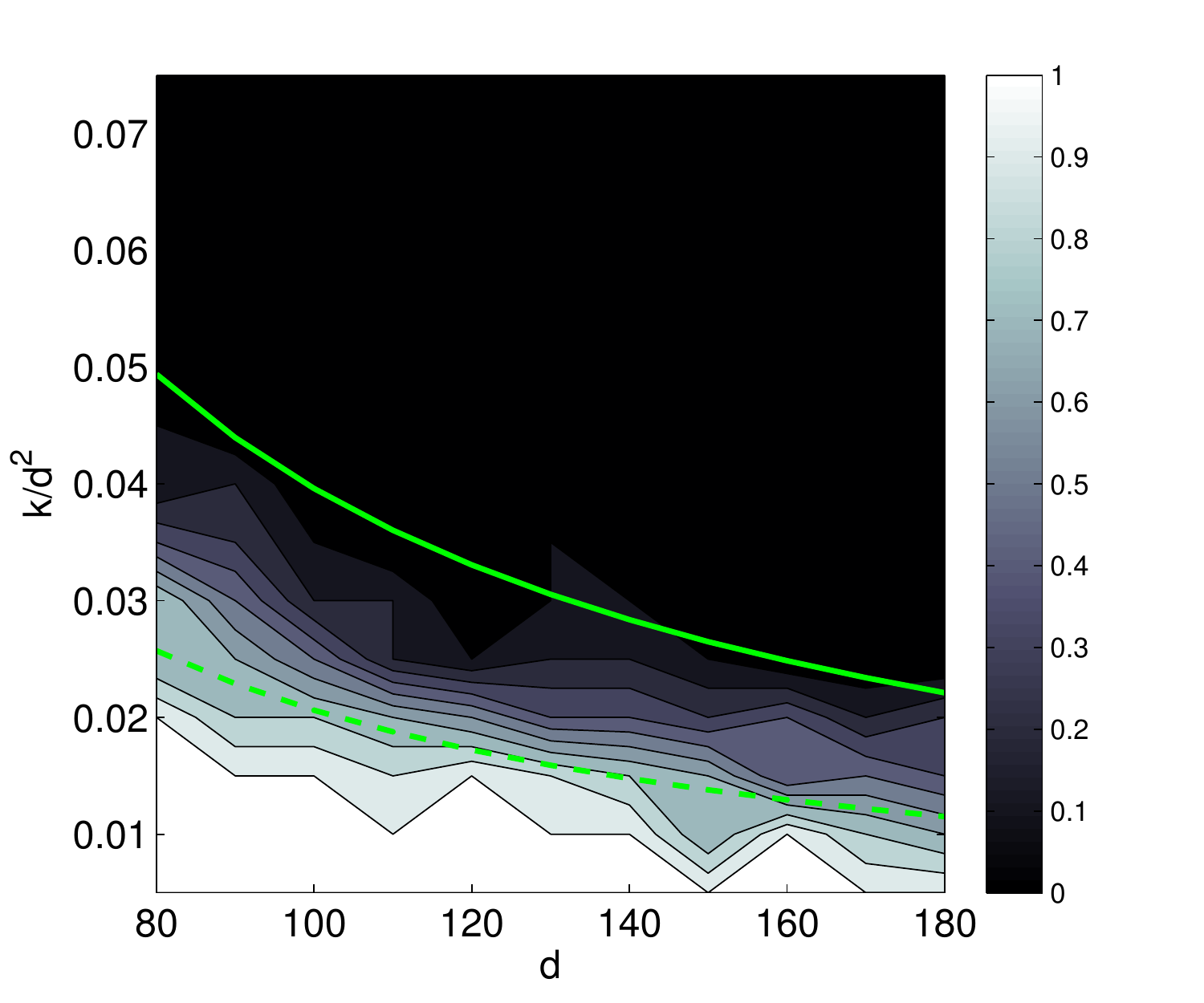} 
\includegraphics[width=0.35\textwidth]{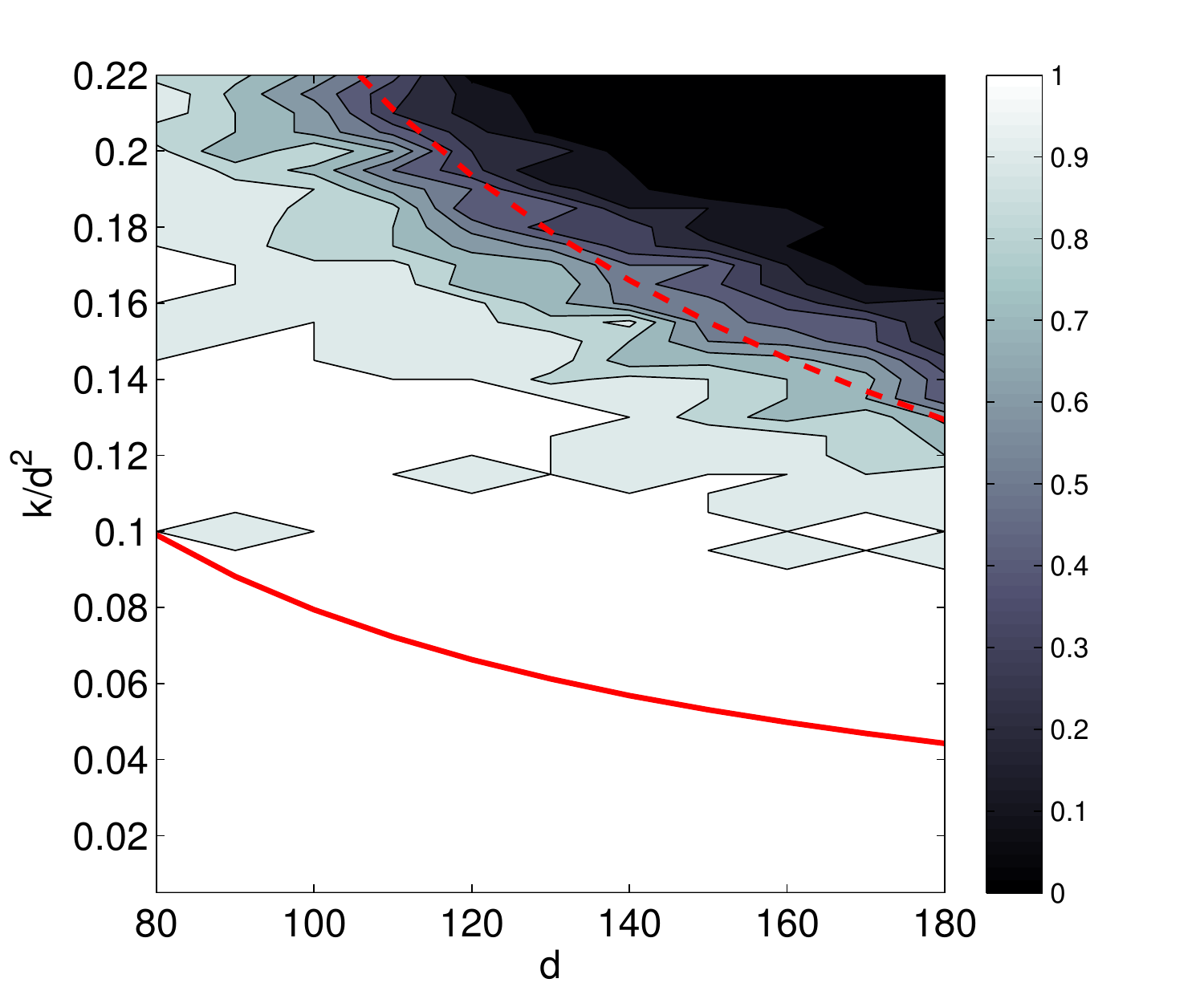} 
}
\vspace{-3mm}
\centerline{
\includegraphics[width=0.35\textwidth]{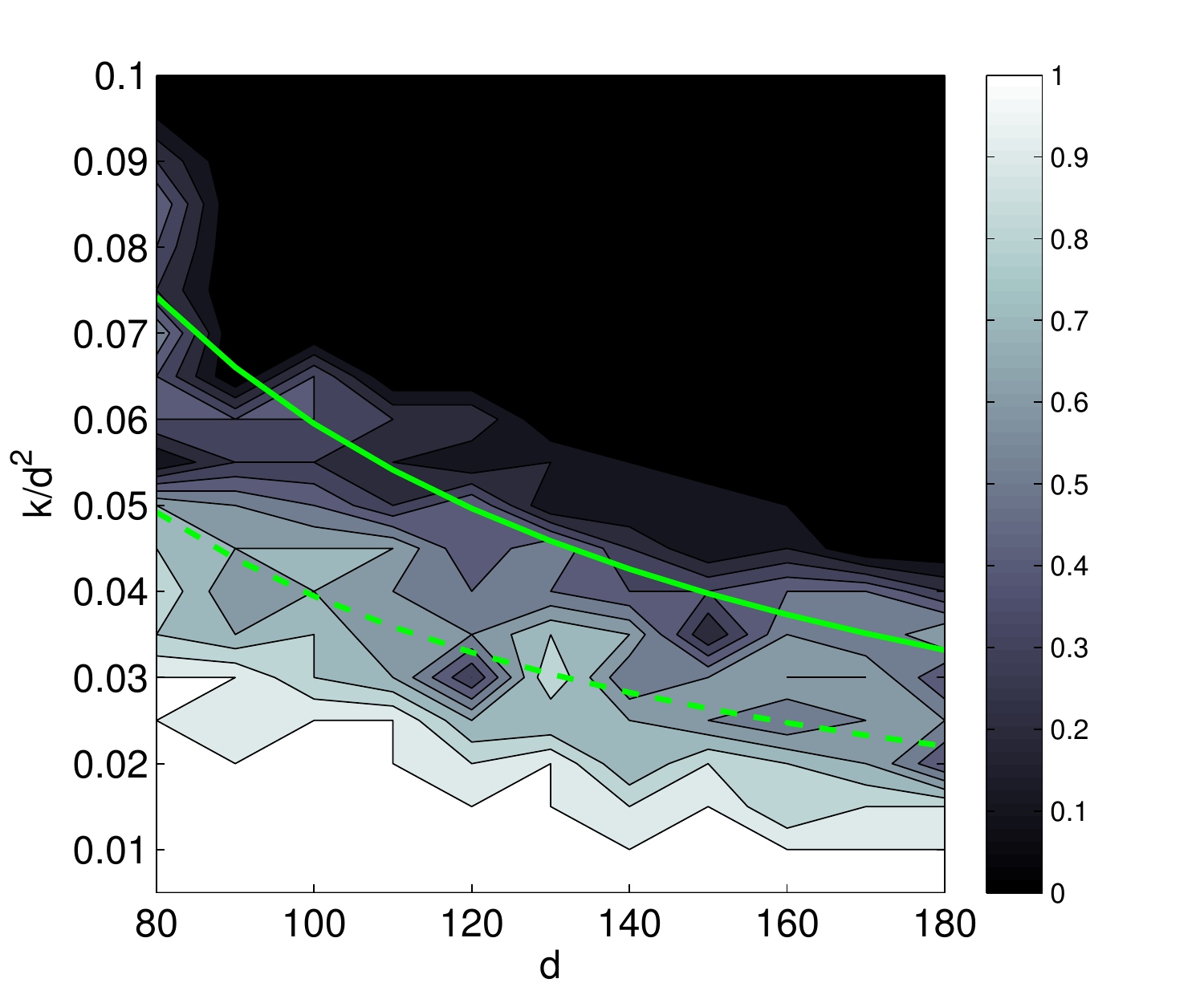} 
\includegraphics[width=0.35\textwidth]{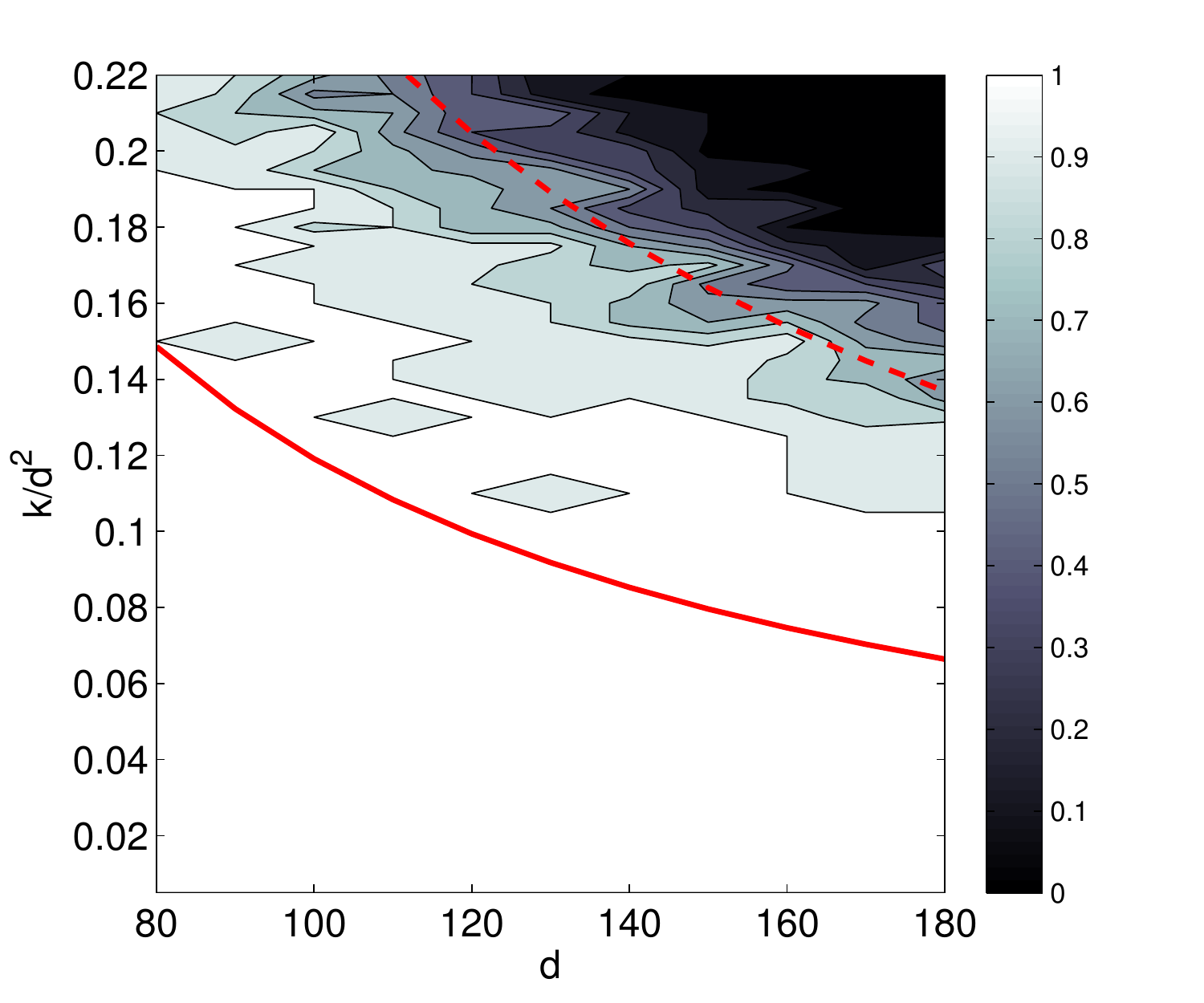} 
}
\vspace{-3mm}
\centerline{
\includegraphics[width=0.35\textwidth]{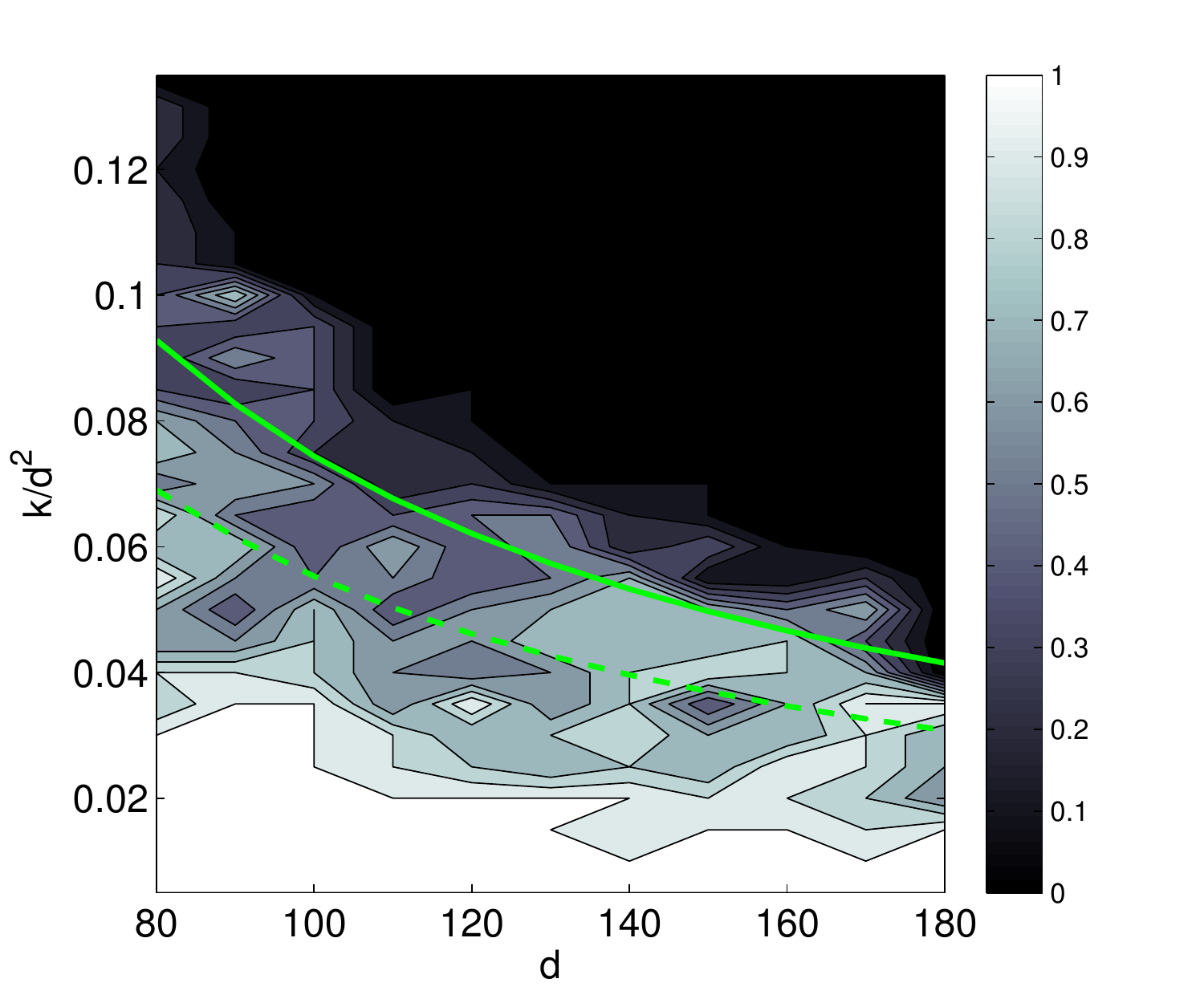} 
\includegraphics[width=0.35\textwidth]{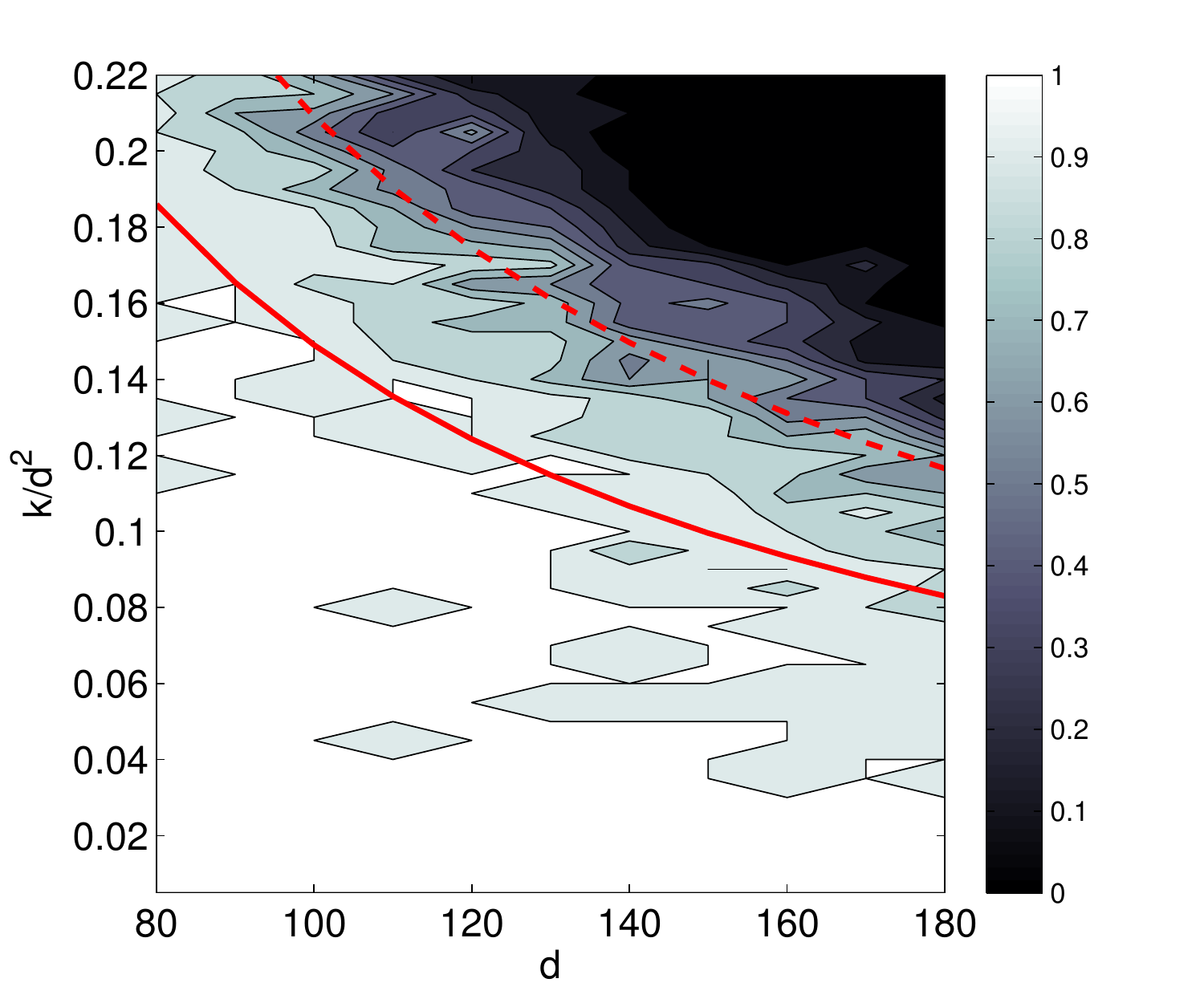} 
}
\vspace{-3mm}
\centerline{
\includegraphics[width=0.35\textwidth]{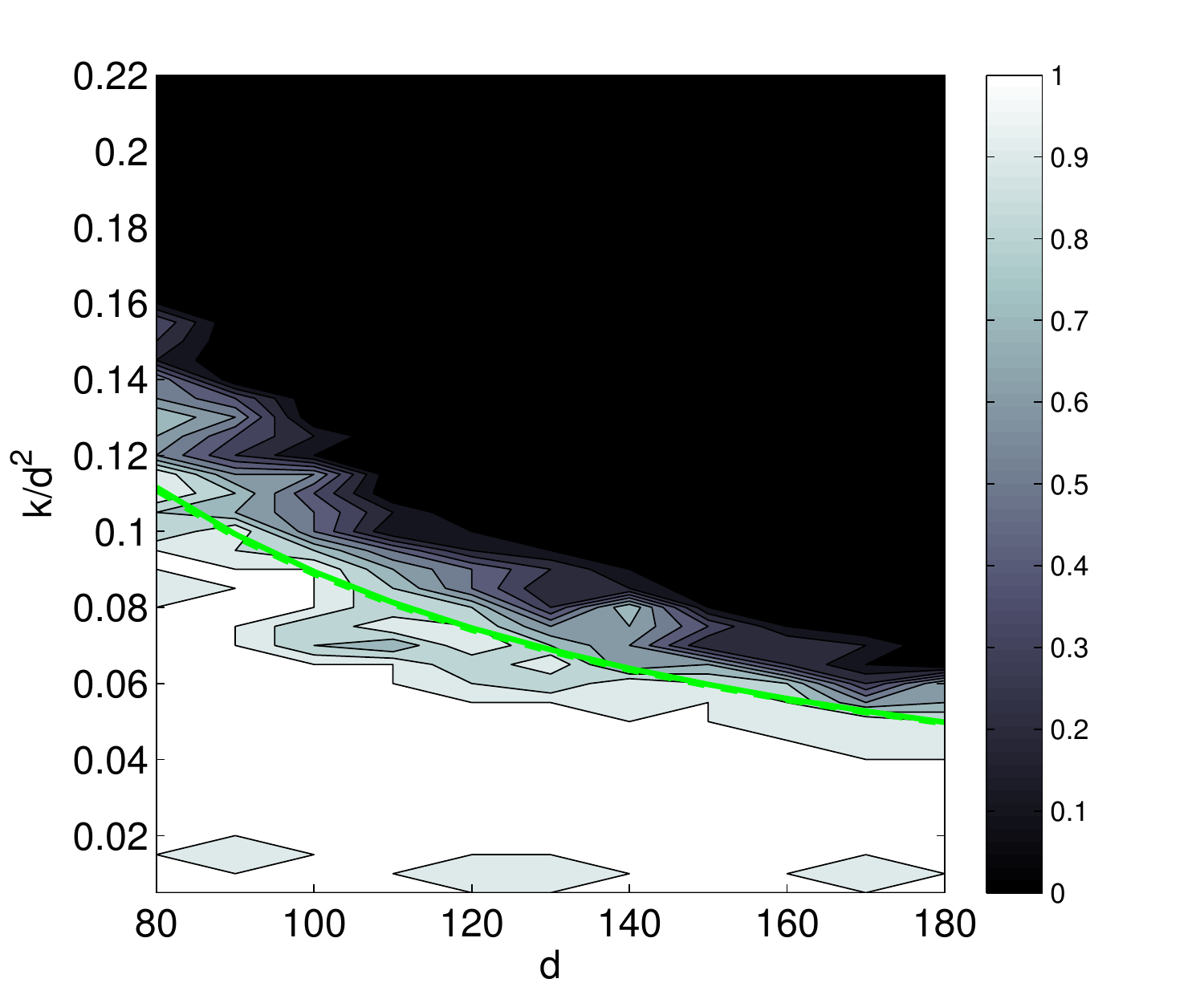} 
\includegraphics[width=0.35\textwidth]{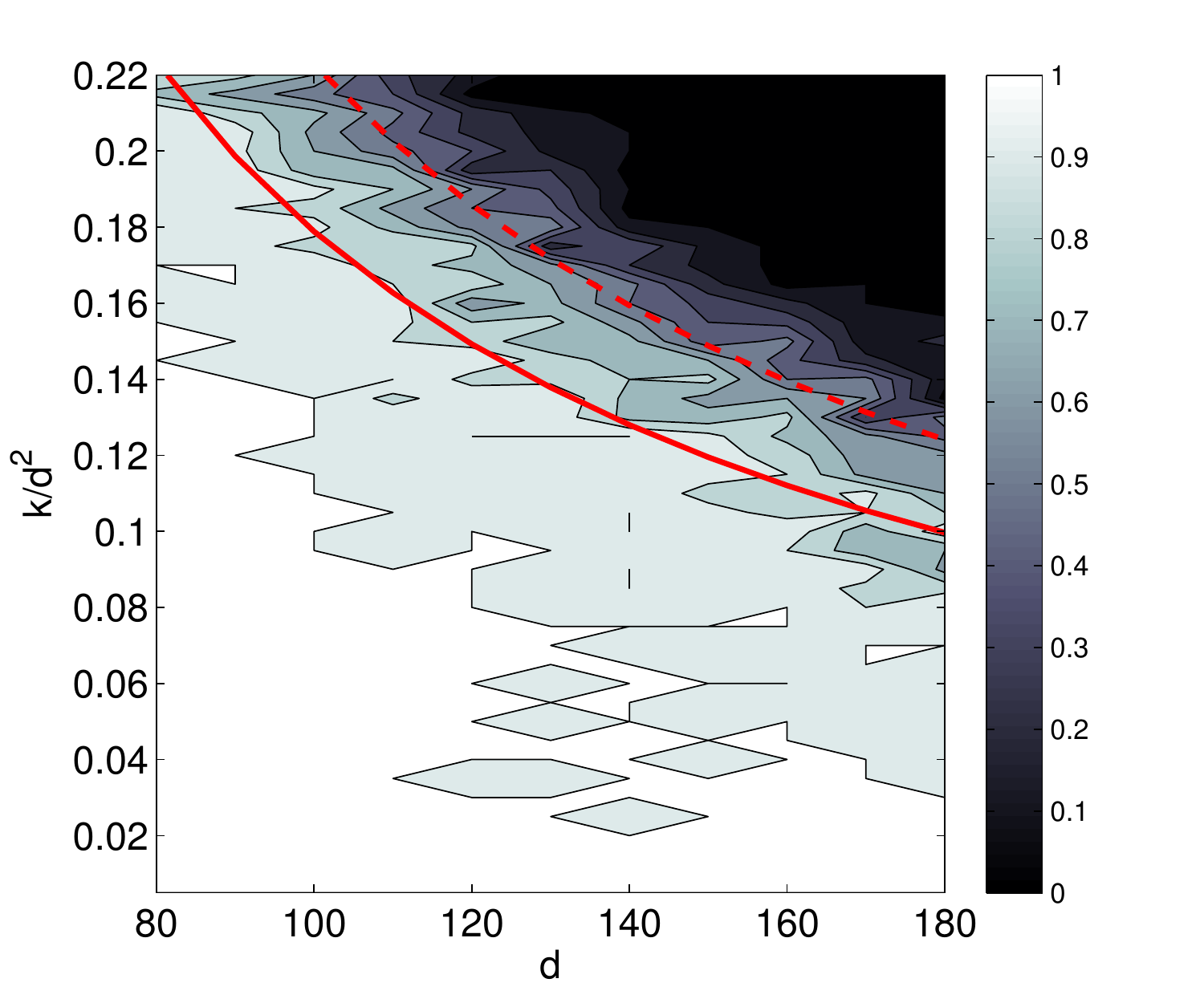} 
}
\caption{Phase transitions for the perturbed matrix $\tilde{A}$, 2D case, 3, 4, 5 and 6 projections (top to bottom), with unknown (left column) and known (right column) cosupport. The continuous green and red lines depict the theoretical curves \eqref{eq:m-lb-unknown-2D} and \eqref{eq:m-lb-known-2D} respectively. The dashed lines correspond to the empirical threshold, which are all scaled versions of
 \eqref{eq:m-lb-unknown-2D} (left column) or \eqref{eq:m-lb-known-2D} (right column)
with scaling factors $\alpha$ listed in Table. \ref{tab:2}.
}
\label{fig:res2D_perturbed}
\end{figure}


\begin{table} [hbtp]

	\begin{tabular}{|c|c|c|c|c|c|c|}
		\hline
		\multicolumn{7}{|c|}{$\alpha$-values in 2D} \\
		\hline
			$d$ & Cosupport & Measurements & $3P$ & $4P$ & $5P$ & $6P$  \\
	    \hline
 \multirow {4}{*}{80 \dots 180} & \multirow{2}{*}{Known}  & unperturbed  & 3.6358 & 2.5349 & 2.1073 & 1.5241\\ \cline{3-7}
       				  		    &    				      & perturbed  	 & 2.9220 & 2.0614 & 1.4039 & 1.2453 \\ \cline{2-7}
  	   					 		& \multirow{2}{*}{Unknown}& unperturbed  & 0.5560 & 0.6912 & 0.7556 & 1.0104\\ \cline{3-7}
       				     		& 					      & perturbed    & 0.5208 & 0.6630 & 0.7435 & 0.9926\\ \cline{1-7}                  
	    \hline
	\end{tabular}
	
\vspace{2mm}
\caption{The scaling factors of the theoretical curves \eqref{eq:m-lb-known-2D}
and \eqref{eq:m-lb-unknown-2D} for known and unknown cosupport respectively. }
\label{tab:2}
\end{table}


\subsubsection{Recovery of 3D Images}\label{sec:results3D}

The results are shown in Fig.~\ref{fig:res3D_d31} and Fig.~\ref{fig:res3D_d41} for $d=31$ and $d=41$, 
and summarized in Fig. \ref{fig:3D_phase_trans}.
The empirical phase transitions differ again from the analytically derived thresholds \ref{eq:m-lb-known-3D} and
\ref{eq:m-lb-unknown-3D} only by a scaling factor $\alpha$. These values are listed as Table
\ref{tab:22}. The rescaled curves are shown as dotted lines in the plots.
Fig. \ref{fig:3D_phase_trans} also relates the critical sparsity of the gradient to the critical sparsity estimated in
\cite{Petra-Schnoerr-LAA,Petra2013b},
which in turn implies exact recovery via \eqref{eq:l1min}.


\begin{figure}[htbp]
\centerline{
\includegraphics[width=0.4\textwidth]{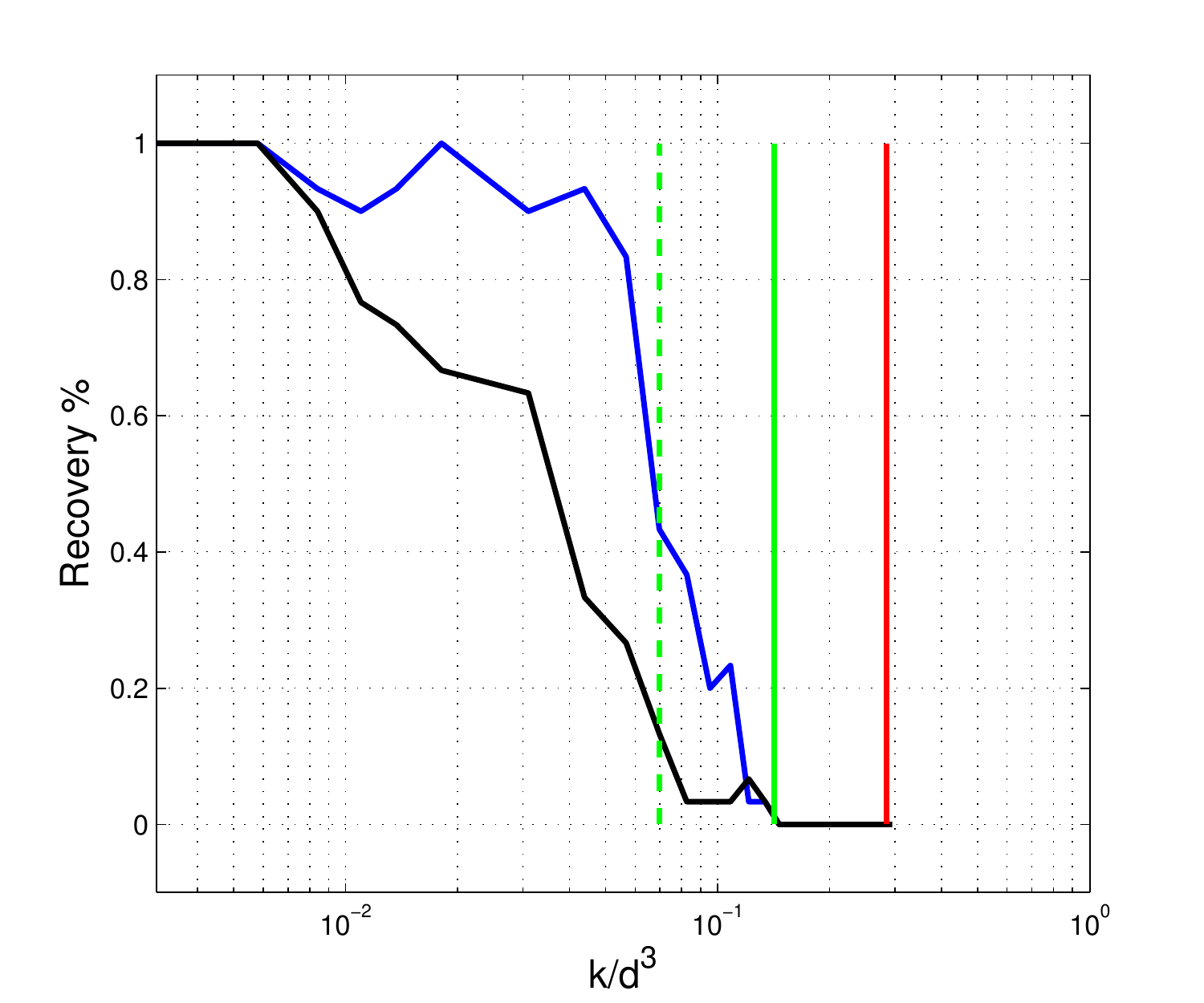} 
\includegraphics[width=0.4\textwidth]{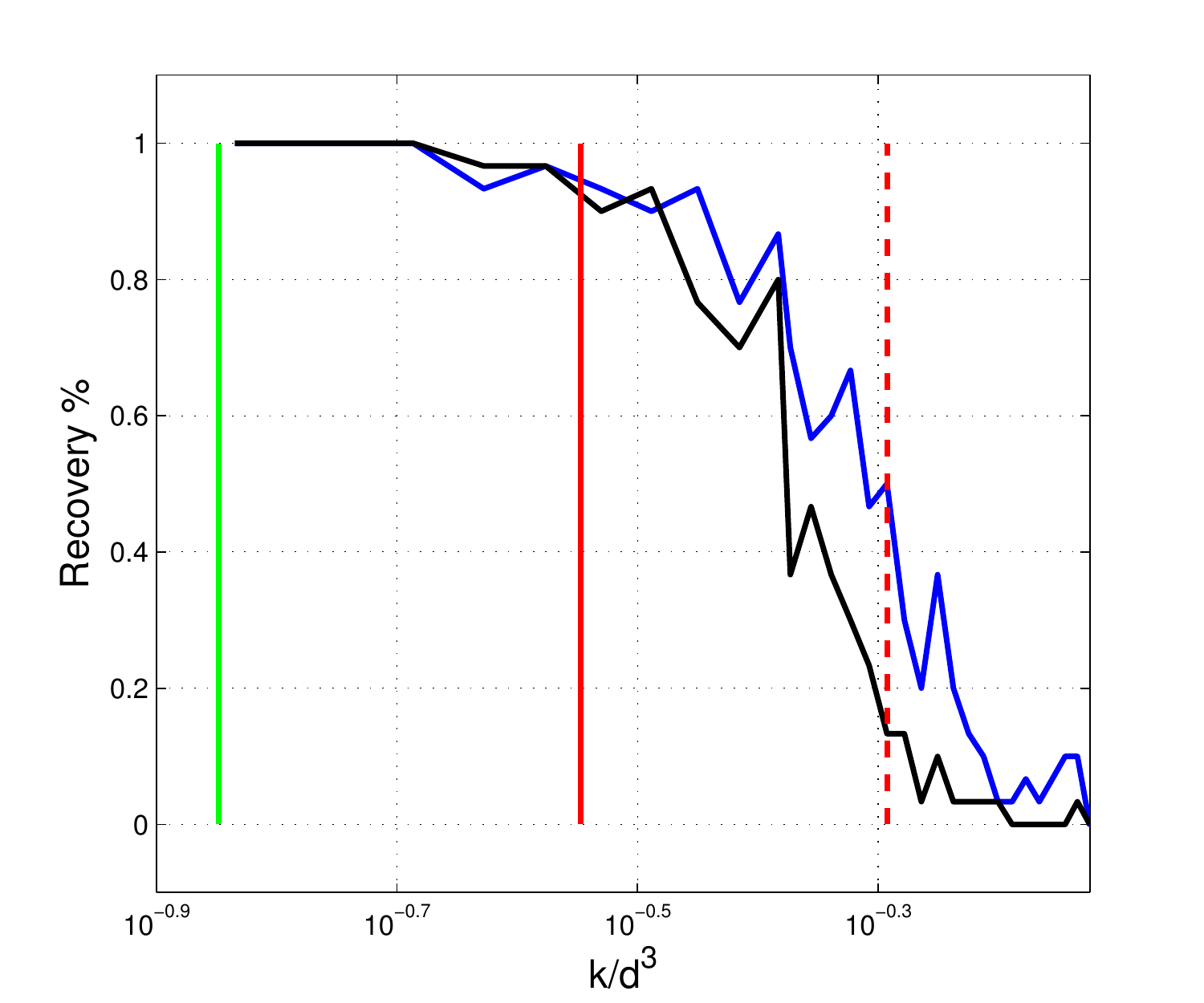}
}
\centerline{
\includegraphics[width=0.4\textwidth]{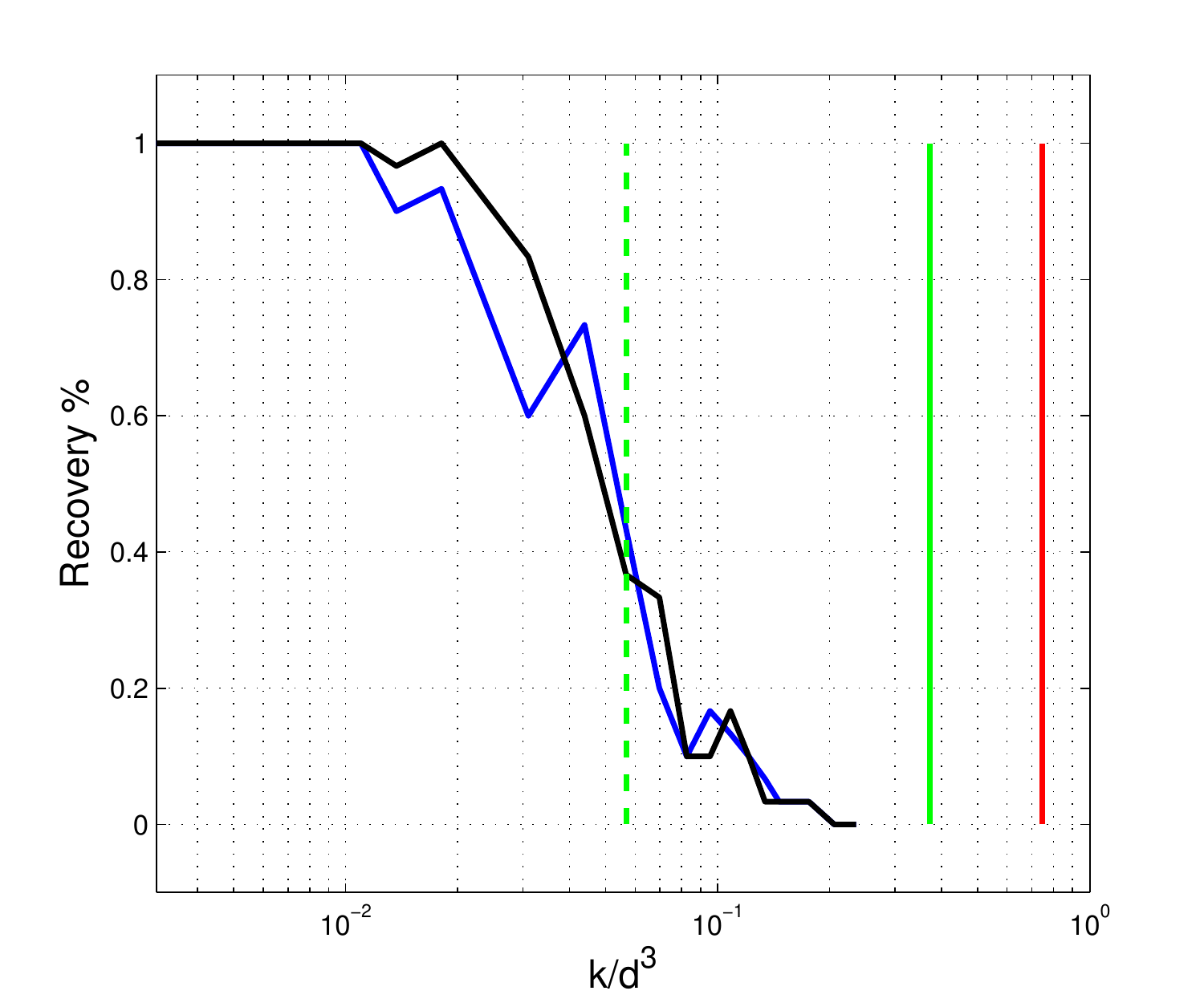}
\includegraphics[width=0.4\textwidth]{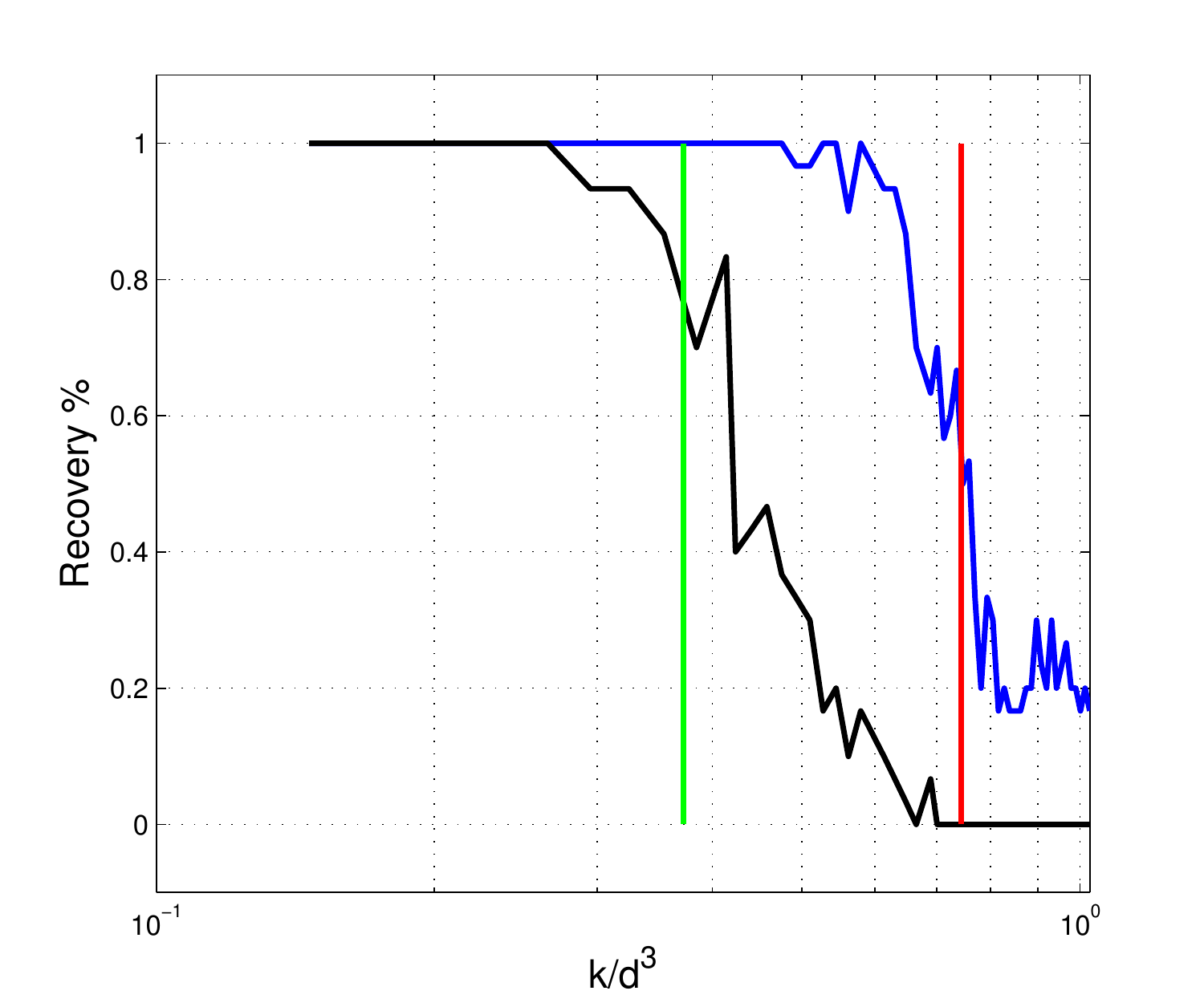}
}
\caption{Empirical probability ($6\times 30$ trials) of exact recovery by total variation minimization \eqref{eq:TVmin_pos}  via the unperturbed (blue line) and perturbed (black line) matrices from Section \ref{sec:setup_3D}, 
Fig. \ref{fig:3C3D} and Fig. \ref{fig:4C3D}, 
for 3 (top row) and 4 (bottom row) projecting directions, respectively, and $d = 31$. 
The left column shows the decay of the recovery probability when the cosupport is \emph{unknown} using both perturbed and unperturbed
projecting matrices, while the right one, shows results for \emph{known} cosupport.
The continuous vertical lines stand for the theoretical thresholds for known \eqref{eq:m-lb-known-3D} (red) and  
unknown \eqref{eq:m-lb-unknown-3D} (green) cosupport,
while the dotted red and green vertical lines stand for the empirically estimated threshold for known and unknown cosupport but for unperturbed matrices only. The deviation of the empirical thresholds from the theoretical curves for known cosupport \eqref{eq:m-lb-known-3D} and 
unknown cosupport \eqref{eq:m-lb-unknown-3D}
was estimated through least-squares fit and is summarized in Table \ref{tab:22}, along with results for the perturbed matrices.}
\label{fig:res3D_d31}
\end{figure}


\begin{figure} [htbp]
\centerline{
\includegraphics[width=0.4\textwidth]{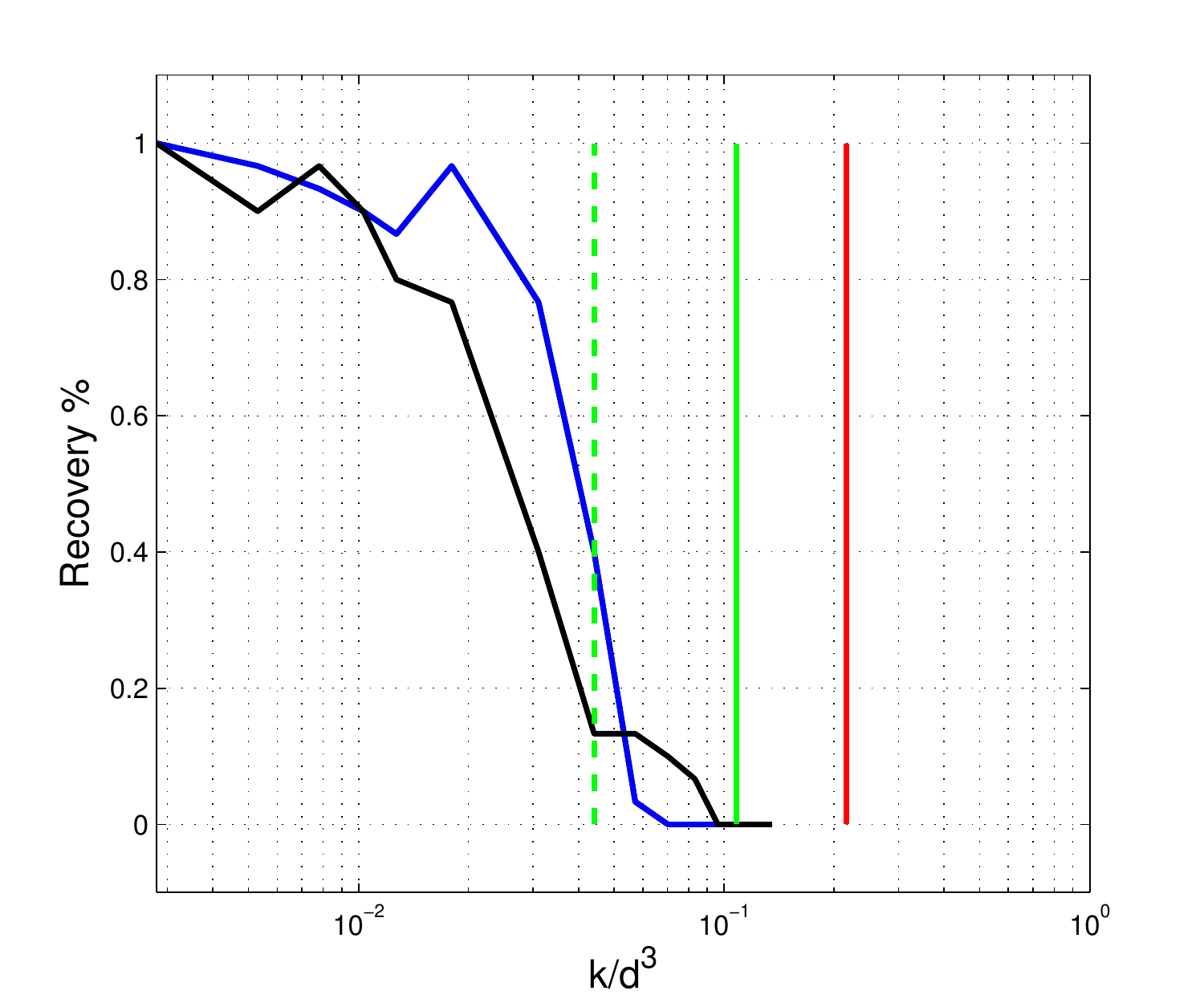} 
\includegraphics[width=0.4\textwidth]{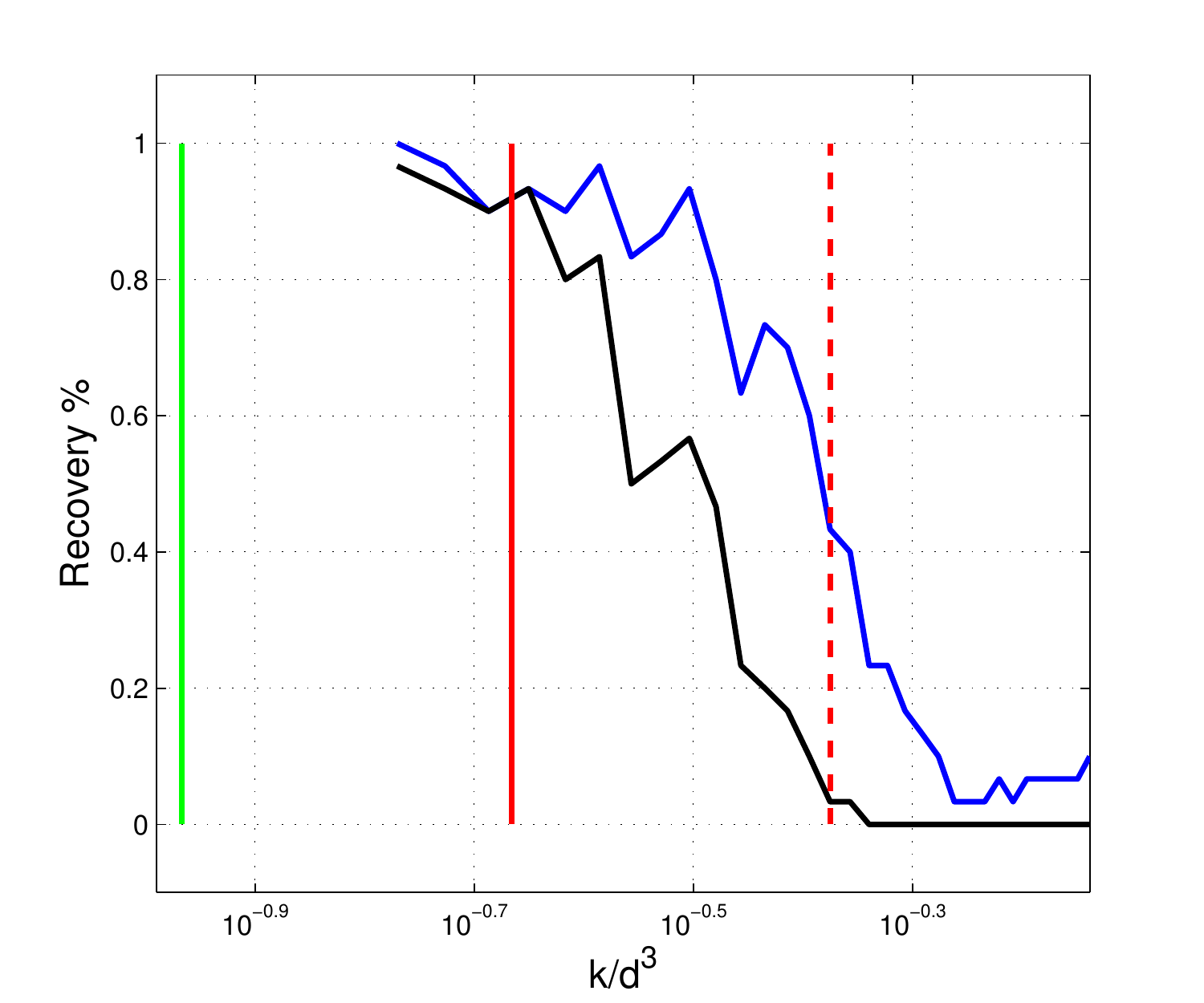}
}
\centerline{
\includegraphics[width=0.4\textwidth]{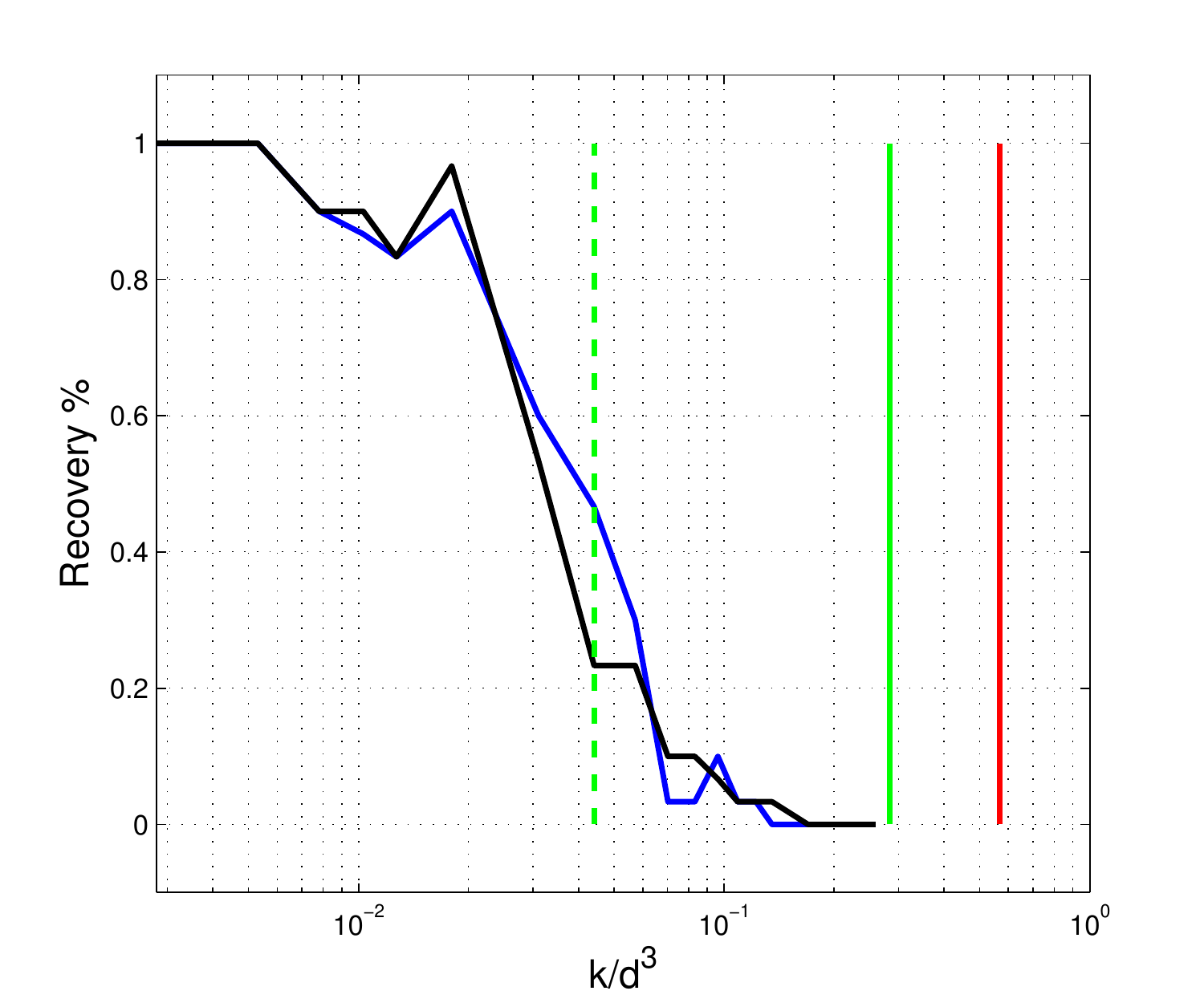}
\includegraphics[width=0.4\textwidth]{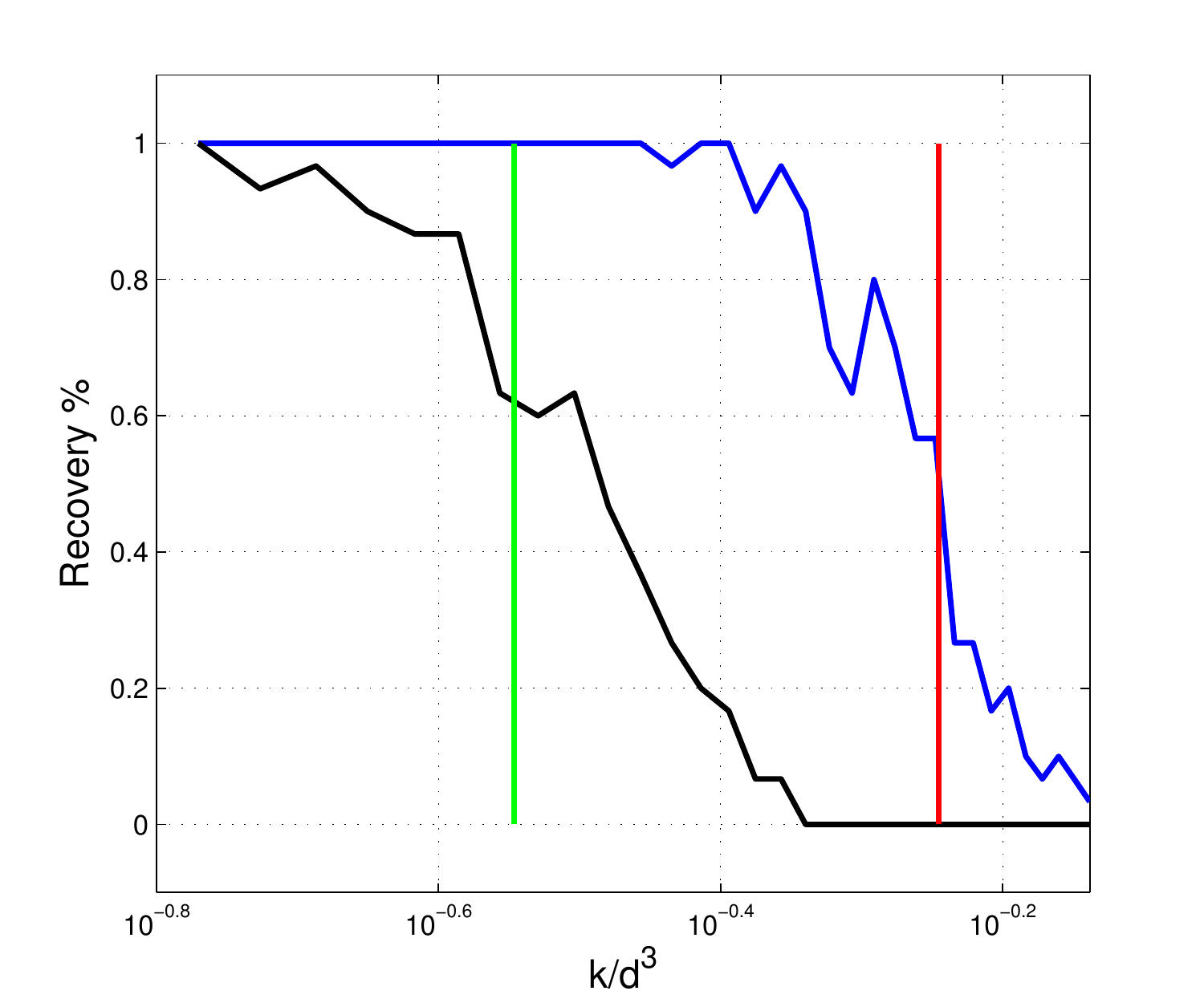}
}
\caption{Empirical probability ($6\times 30$ trials) of exact recovery by total variation minimization \eqref{eq:TVmin_pos}
via the unperturbed (blue line) and perturbed (black line) matrices from Section \ref{sec:setup_3D}, 
Fig. \ref{fig:3C3D} and Fig. \ref{fig:4C3D}, 
for 3 (top row) and 4 (bottom row) projecting directions respectively. Hereby $d = 41$. The significance of
each curve is identically to the one in Fig.~\ref{fig:res3D_d31}. Scaling factors are summarized in Table \ref{tab:22},
}
\label{fig:res3D_d41}
\end{figure}


\begin{figure}[htbp]
\centerline{
\includegraphics[width=0.8\textwidth]{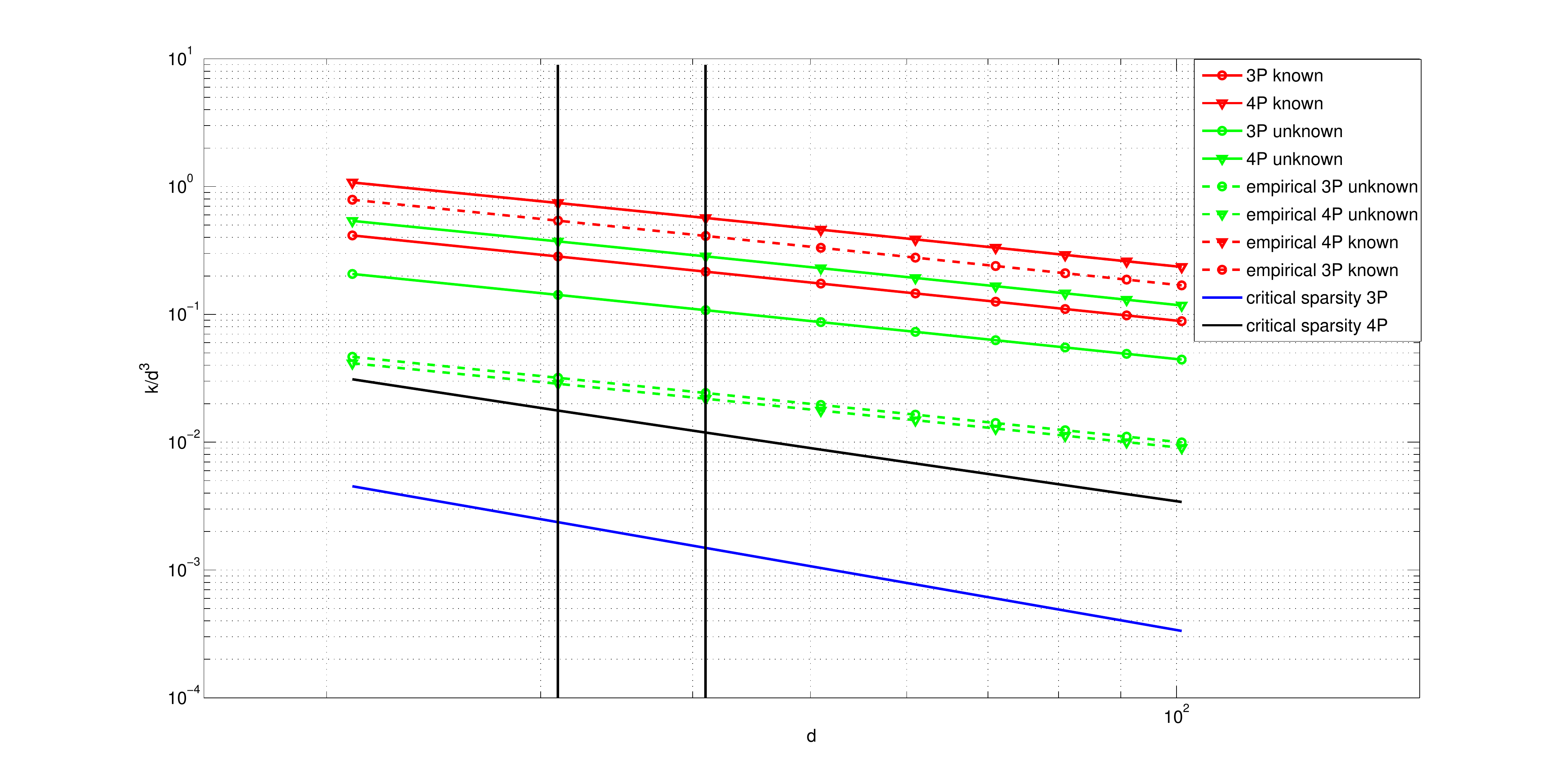}
}
\centerline{
\includegraphics[width=0.8\textwidth]{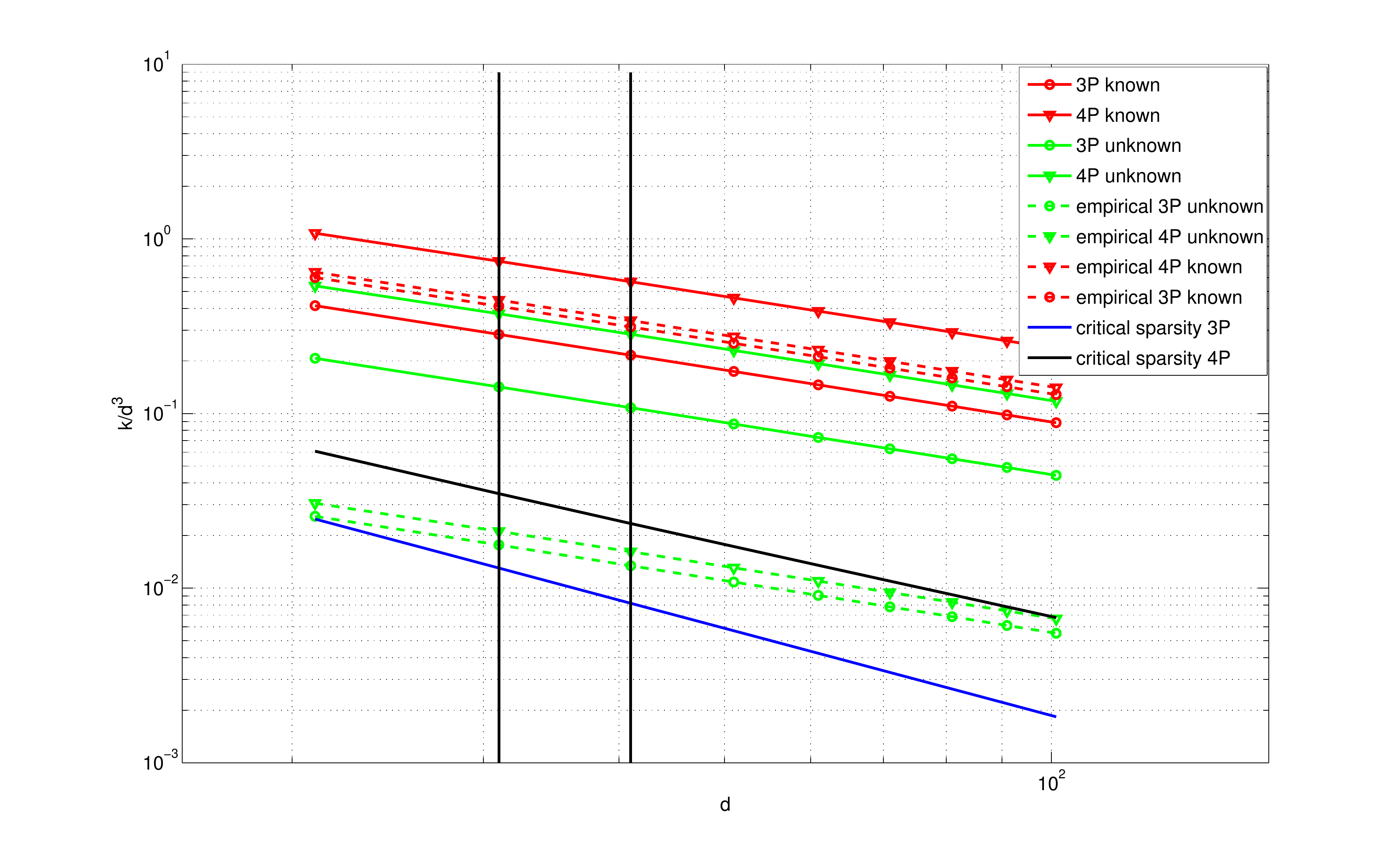}
}
\caption{Log-log plot of phase transitions in 3D for the unperturbed matrix $A$ (top), and perturbed matrix $\tilde A$ (bottom) 
for 3 ($\circ$-marked curves) and 4 projecting directions ($\triangledown$-marked curves). 
The continuous green and red lines depict the theoretical curves \eqref{eq:m-lb-unknown-3D} and \eqref{eq:m-lb-known-3D} respectively. The dashed lines correspond to the empirical thresholds, which are all scaled versions of \eqref{eq:m-lb-unknown-3D} or \eqref{eq:m-lb-known-3D}
with scaling factors summarized in Table. \ref{tab:22}. The blue (stands for 3 projecting directions) and black (stands for 4 projecting directions)
curves show the relative critical sparsity such that $k$ random points are recovered \emph{exactly} by \eqref{eq:l1min}.
These are the theoretical phase transition $\ell_1$-recovery from \cite{Petra-Schnoerr-LAA}, \cite{Petra2013b}. The vertical lines correspond to 
$d=31$ and $d=41$, compare with Fig.~\ref{fig:res3D_d31} and Fig.~\ref{fig:res3D_d41}.}
\label{fig:3D_phase_trans}
\end{figure}


\begin{table}[hbtp]

	\begin{tabular}{|c|c|c|c|c|}
		\hline
		\multicolumn{5}{|c|}{$\alpha$-values in 3D} \\
		\hline
			$\#$ proj. dir. & Cosupport & Measurements & $d=31$ & $d = 41$ \\
	    \hline
	 \multirow {4}{*}{3} & \multirow{2}{*}{Known}  & unperturbed & 1.8568 & 1.9527 \\ \cline{3-5}
       				     & 					       & perturbed   & 1.6137 & 1.2860 \\ \cline{2-5}
        					 & \multirow{2}{*}{Unknown}& unperturbed & 0.4910 & 0.4086 \\ \cline{3-5}
      				     & 					       & perturbed   & 0.2098 & 0.2882 \\ \cline{1-5}
     \multirow {4}{*}{4} & \multirow{2}{*}{Known}  & unperturbed & 1 	   & 1		\\ \cline{3-5}
       				     & 					       & perturbed   & 0.6153 & 0.5833 \\ \cline{2-5}
       				     & \multirow{2}{*}{Unknown}& unperturbed & 0.1526 & 0.1552	\\ \cline{3-5}
       				     & 					       & perturbed   & 0.1180 & 0.1094 \\       
     \hline
	\end{tabular}
	
\vspace{2mm}
\caption{The scaling factors of the theoretical curves \eqref{eq:m-lb-known-3D}
and \eqref{eq:m-lb-unknown-3D} for known and unknown cosupport respectively.
}
\label{tab:22}
\end{table}

\clearpage
\subsection{Discussion}
Several observations are in order.
\begin{itemize}
\item Perturbation of projection matrices brings no significant advantage
in the practically relevant case of unknown co-support.
The empirical transitions will remain the same for perturbed and unperturbed matrices.
This is very different to the $\ell_1$-minimization
problem \eqref{eq:l1min}, where perturbation boosts the recovery performance
significantly as shown in \cite{Petra-Schnoerr-LAA}.

\item In the case of known co-support, when $B_\Lambda u = 0$ is added as additional constraint,
unperturbed matrices perform better. We notice that the
empirical phase transition is \emph{above} the red curve, and deduce that linear 
dependencies might be beneficial when the co-support is known.

\item When increasing the number of projecting directions (4,5,6 or more) 
the differences between estimated (dashed) and theoretical (continuous line) phase transition
become smaller. This might be due to the fact that linear dependencies between the
columns (and rows) of $A$ become ``rare'', and the assumptions of Propositions \ref{prop:unique-cs-known} and \ref{prop:unique-cs-unknown} are more likely to be satisfied.

\item In 3D the difference between empirical phase transitions
for 3 and 4 projecting directions is very small, i.e.
relative phase transitions are almost equal. This is different
to the 2D case above. We currently do not have an explanation for this phenomenon.

\item The log-log plot in Figure \ref{fig:3D_phase_trans} shows
that phase transitions in 3D exhibit a power law behavior, similar to
the theoretical phase transitions for $\ell_1$-recovery from \cite{Petra-Schnoerr-LAA}, \cite{Petra2013b}.
Moreover, the plot also shows the scaling exponent of the green and red curves is higher, which results
in significantly higher sparsity levels of the image gradient then image sparsity
which allow exact recovery for big volumes and large $d$. 

\end{itemize}

\section{Conclusion}
\label{sec:Conclusions}
We studied the cosparsity model in order to theoretically investigate conditions for unique signal recovery from severely undersampled linear systems, that involve measurement matrices whose properties fall far short of the assumptions commonly made in the compressed sensing literature. Extensive numerical experiments revealed a high accuracy of the theoretical predictions, up to a scale factor caused by slight violations in practice of our mathematical assumptions. Unique recovery can be accomplished by linear programming that in principle copes with large problem sizes. The signal class covered by the cosparsity model seems broad enough to cover relevant industrial applications of non-standard tomography, like contactless quality inspection.

In our future work, we will aim at clarifying quantitatively the above-mentioned scale factor and its origin. In the same context, conducting a probabilistic analysis as in our recent work \cite{Petra-Schnoerr-LAA}), for the present scenarios, defines an open problem. We expect that the refinement of a probabilistic version of the cosparsity model, in connection with distributions of cosupports learned from relevant collections of signals, may have an impact both theoretically and practically beyond aspects of limited-angle tomography.

\vspace{1cm}
\noindent
\textbf{Acknowledgement.} SP gratefully acknowledges financial support from the Ministry of Science, Research and Arts, Baden-W\"{u}rttemberg, within the Margarete von Wrangell postdoctoral lecture qualification program. AD and the remaining authors appreciate financial support of this project by the Bayerische Forschungsstiftung. 

\bibliographystyle{amsalpha}

\end{document}